\newlength{\depthofsumsign}
\let\I\@undefined
\newcommand{\qoppa}{\kern-.1em\rotatebox{180}{\raisebox{-0.42em}{$'$}}\kern-.2em{o}}
\DeclareMathOperator{\Span}{span}
\DeclareMathOperator{\Li}{Li}
\DeclareMathOperator{\D}{d}
\DeclareMathOperator{\I}{Im}
\DeclareMathOperator{\RE}{Re}
\def\eor{\hfill$ \square$}
\newcolumntype{L}{>{$}l<{$}}
\newcolumntype{C}{>{$}c<{$}}
\newcolumntype{R}{>{$}r<{$}}
\theoremstyle{plain}
\newtheorem{theorem}{Theorem}[section]
\newtheorem{proposition}[theorem]{Proposition}
\newtheorem{lemma}[theorem]{Lemma}
\newtheorem{corollary}[theorem]{Corollary}
\newenvironment{remark}[1][Remark]{\begin{trivlist}
\item[\hskip \labelsep {\bfseries #1}]}{\end{trivlist}}
\theoremstyle{definition}
\numberwithin{equation}{section}
\newcommand\AAAXlabel[1]{%
  \zref@labelbyprops{#1}{AAAX}%
  \label{#1}%
}
\newcommand\BBBXlabel[1]{%
  \zref@labelbyprops{#1}{BBBX}%
  \label{#1}%
}
\newcommand\AAAYlabel[1]{%
  \zref@labelbyprops{#1}{AAAY}%
  \label{#1}%
}
\newcommand\BBBYlabel[1]{%
  \zref@labelbyprops{#1}{BBBY}%
  \label{#1}%
}
\newcommand\AAAAlabel[1]{%
  \zref@labelbyprops{#1}{AAAA}%
  \label{#1}%
}
\newcommand\BBBBlabel[1]{%
  \zref@labelbyprops{#1}{BBBB}%
  \label{#1}%
}
\newcommand\AAAlabel[1]{%
  \zref@labelbyprops{#1}{AAA}%
  \label{#1}%
}
\newcommand\AAAref[1]{%
   \zref[AAA]{#1}%
}
\newcommand\BBBlabel[1]{%
  \zref@labelbyprops{#1}{BBB}%
  \label{#1}%
}
\newcommand\BBBref[1]{%
   \zref[BBB]{#1}%
}
\newcommand\AAlabel[1]{%
  \zref@labelbyprops{#1}{AA}%
  \label{#1}%
}
\newcommand\AAref[1]{%
   \zref[AA]{#1}%
}
\newcommand\BBlabel[1]{%
  \zref@labelbyprops{#1}{BB}%
  \label{#1}%
}
\newcommand\BBref[1]{%
   \zref[BB]{#1}%
}
\newcommand\CClabel[1]{%
  \zref@labelbyprops{#1}{CC}%
  \label{#1}%
}
\newcommand\CCref[1]{%
   \zref[CC]{#1}%
}
\newcommand\DDlabel[1]{%
  \zref@labelbyprops{#1}{DD}%
  \label{#1}%
}
\newcommand\DDref[1]{%
   \zref[DD]{#1}%
}
\newcommand\SSlabel[1]{%
  \zref@labelbyprops{#1}{SS}%
  \label{#1}%
}
\newcommand\SSOlabel[1]{%
  \zref@labelbyprops{#1}{SSO}%
  \label{#1}%
}
\DeclareMathAlphabet{\mathsf}{OT1}{\sfdefault}{m}{n}
\SetMathAlphabet{\mathsf}{bold}{OT1}{\sfdefault}{m}{n}
\DeclareSymbolFontAlphabet{\mathbb}{AMSb}
\def\Z{\Bbb Z}
\def\l{\left}
\def\r{\right}
\def\bg{\bigg}
\def\({\bg(}
\def\){\bg)}
\def\t{\text}
\def\f{\frac}
\def\bi{\binom}
\begin{document}

\pagenumbering{roman}
\selectlanguage{english}
\title[Proof of conjectures on series with summands involving $ \binom{2k}{k}8^k/(\binom{3k}{k}\binom{6k}{3k})$]{Proof of conjectures on series\\ with summands involving $ \binom{2k}{k}8^k/(\binom{3k}{k}\binom{6k}{3k})$}
\author{Zhi-Wei Sun}\address{(Zhi-Wei Sun) Department of Mathematics, Nanjing
University, Nanjing 210093, People's Republic of China}
\email{{\tt zwsun@nju.edu.cn}
\newline\indent
{\it Homepage}: {\tt http://maths.nju.edu.cn/\lower0.5ex\hbox{\~{}}zwsun}}
 \author{Yajun Zhou
}
\address{(Yajun Zhou) Program in Applied and Computational Mathematics (PACM), Princeton University, Princeton, NJ 08544} \email{yajunz@math.princeton.edu}\curraddr{\textrm{} \textsc{Academy of Advanced Interdisciplinary Studies (AAIS), Peking University, Beijing 100871, P. R. China}}\email{yajun.zhou.1982@pku.edu.cn}
\date{\today}\thanks{\textit{Keywords}:  Infinite series, binomial coefficients, harmonic numbers, multiple polylogarithms\\\indent\textit{2020 Mathematics Subject Classification}:  33B15, 33B30, 33F10, 11M32, 11B65\\\indent *The first author was supported by the Natural Science Foundation of China (grant no. 12371004), and the second author was supported in part  by the Applied Mathematics Program within the Department of Energy
(DOE) Office of Advanced Scientific Computing Research (ASCR) as part of the Collaboratory on
Mathematics for Mesoscopic Modeling of Materials (CM4)}

\maketitle

\begin{abstract}
     Using cyclotomic multiple zeta values of level $8$, we confirm and generalize  several conjectural  identities on infinite series with summands involving $\bi{2k}k8^k/(\bi{3k}k\bi{6k}{3k})$.
     For example, we prove that
     $$\sum_{k=0}^\infty\frac{(350k-17)\binom{2k}k8^k}
{\binom{3k}k\binom{6k}{3k}}=15\sqrt2\,\pi+27$$
and
$$\sum_{k=1}^\infty\f{\l\{(5k-1)\left[16\mathsf H_{2k-1}^{(2)}-3\mathsf H_{k-1}^{(2)}\right]-\frac{12(6k-1)}{(2k-1)^2}\r\}\bi{2k}k8^k}
{k(2k-1)\bi{3k}k\bi{6k}{3k}}=\f{\pi^3}{12\sqrt2},$$
where $\mathsf H^{(2)}_m$ denotes the second-order harmonic number $\sum_{0<j\le m}\f1{j^2}$.
      \end{abstract}

\pagenumbering{arabic}

\section{Introduction}

For $r\in\mathbb Z_{>0}=\{1,2,\ldots\}$, the $r$th harmonic numbers are given by
\begin{align} \mathsf H_m^{(r)}\colonequals\sum_{k=1}^m\frac1{k^{r}}
 \ \ (m\in\mathbb Z_{\ge0}=\{0,1,2,\ldots\}).\end{align} Customarily, one writes $\mathsf H_m^{}$ as a short-hand for $  \mathsf H_m^{(1)}$. 
 Recently, the first author \cite{Sun2022,Sun2023} has proposed many conjectures on values of infinite series whose summands involve both binomial coefficients and harmonic numbers.
 
 The Dirichlet $L$-function associated with a Dirichlet character $\chi$ is given by
 $$L(s,\chi):=\sum_{n=1}^\infty\f{\chi(n)}{n^s}\ \ \ \t{for}\ \Re(s)>1.$$
 For convenience, we define the constant     
 \begin{align} L\colonequals L\l(2,\l(\f{-8}{\cdot}\r)\r)=\sum_{n=1}^\infty\left(\f{-8}n\right)\f{1}{n^2}
=\sum_{k=0}^\infty\f{(-1)^{k(k-1)/2}}{(2k+1)^2},\end{align}
where $(\f{-8}{\cdot})$ is the Kronecker symbol.

Sun \cite{Sun2013} proved that
\begin{align}2(2k+1)\bi{2k}k\ \bigg|\ \bi{3k}k\bi{6k}{3k}\quad\t{for all}\ k\in\Z_{>0}.\end{align}
In a recent preprint  \cite[\S5]{Sun2023} by the first author,  one can find several conjectures about infinite series with summands involving $\bi{2k}k/(\bi{3k}k\bi{6k}{3k})$, such as {\allowdisplaybreaks\small\begin{align}
\sum_{k=0}^\infty\f{(350k-17)\bi{2k}k8^k}
{\bi{3k}k\bi{6k}{3k}}={}&15\sqrt2\pi+27,\label{eq:conj5.4(1)}\\\sum_{k=1}^\infty\f{\l
[21(350k-17)(2\mathsf H_{6k-1}-\mathsf H_{3k-1}-\mathsf H_{k-1})+4850\r]\bi{2k}k8^k}{\bi{3k}k\bi{6k}{3k}}={}&976+1020\sqrt2\pi+945\sqrt2\pi\log2,\label{eq:conj5.4(2)}\\\sum_{k=1}^\infty\f{\l[7(350k-17)(\mathsf H_{2k-1}-\mathsf H_{k-1})+2225\r]\bi{2k}k8^k
}{\bi{3k}k\bi{6k}{3k}}={}& 276+\f{493\pi}{\sqrt2}+\f{315\pi\log2}{\sqrt2}-420L,\label{eq:conj5.4(3)}\\\sum_{k=1}^\infty\f{[(50k-7)(\mathsf H_{2k-1}-\mathsf H_{k-1})+5]\bi{2k}k8^k}{k\bi{3k}k\bi{6k}{3k}}={}&3\sqrt2\pi(1+\log2)-8L,\label{eq:conj5.5(1)}\\\sum_{k=1}^\infty\f{[(50k-7)(2\mathsf H_{6k-1}-\mathsf H_{3k-1}-\mathsf H_{k-1})-10]\bi{2k}k8^k}{k\bi{3k}k\bi{6k}{3k}}={}&2\sqrt2\pi(2+3\log2),\label{eq:conj5.5(2)}\\\sum_{k=1}^\infty\f{\left[(5k-1)(\mathsf H_{2k-1}-\mathsf H_{k-1})-\frac{3(6k-1)}{4(2k-1)}\right]\bi{2k}k8^k}{k(2k-1)\bi{3k}k\bi{6k}{3k}}={}&\frac{3\sqrt2\,\pi\log2}{8}-L,\label{eq:conj5.5ii1}\\\sum_{k=1}^\infty\f{\left[(5k-1)(2\mathsf H_{6k-1}-\mathsf H_{3k-1}-\mathsf H_{k-1})-\frac{2(3k-1)}{2k-1}\right]\bi{2k}k8^k}{k(2k-1)\bi{3k}k\bi{6k}{3k}}={}&\frac{3\sqrt2\,\pi\log2}{4},\label{eq:conj5.5ii2}\\\sum_{k=1}^\infty\f{\big[(5k-1)(12\mathsf H_{6k-1}-6\mathsf H_{3k-1}-4\mathsf H_{2k-1}-2\mathsf H_{k-1})-9\big]\bi{2k}k8^k}{k(2k-1)\bi{3k}k\bi{6k}{3k}}={}&3\sqrt2\,\pi\log2+4L,\label{eq:conj5.5ii3}
\\\sum_{k=1}^\infty\f{\l\{(5k-1)\left[16\mathsf H_{2k-1}^{(2)}-3\mathsf H_{k-1}^{(2)}\right]-\frac{12(6k-1)}{(2k-1)^2}\r\}\bi{2k}k8^k}
{k(2k-1)\bi{3k}k\bi{6k}{3k}}={}&\f{\pi^3}{12\sqrt2}.\label{eq:conj5.5iii}\end{align}}Here, the summation formula \eqref{eq:conj5.4(1)} immediately entails two more identities
(cf.\ \cite[Remark 5.5]{Sun2023} and the last Remark in \S\ref{subsec:Li_n_char}  below): \begin{align}
\sum_{k=1}^\infty\f{(50k-7)\bi{2k}k8^k}{k\bi{3k}k\bi{6k}{3k}}={}&2\sqrt2\,\pi+4,\label{eq:Remark5.5a}\\\sum_{k=1}^\infty\f{(5k-1)\bi{2k}k8^k}{k(2k-1)\bi{3k}k\bi{6k}{3k}}={}&\f{\pi}{2\sqrt2}.
\label{eq:Remark5.5b}\end{align}
 
Our main purpose is to confirm all these summation formulae and put them into a broader context.
In \S\ref{sec:toolkit}, we set up an analytic framework that is essential to all the subsequent computations, reducing the summations of infinite series to the evaluations of definite integrals. In  \S\ref{sec:polylog6k}, we integrate over certain products of rational functions and logarithmic expressions, not only proving \eqref{eq:conj5.4(1)}--\eqref{eq:conj5.5iii}, but also evaluating a wider class of infinite series in the mean time.

\section{Analytic toolkit\label{sec:toolkit}}
\subsection{Some integrals related to binomial coefficients\label{subsec:6k_int_repn}}In this work, we convert infinite series into definite integrals, using the next lemma. \begin{lemma}
We have the following identities for $ k\in\mathbb Z_{>0}$:{\allowdisplaybreaks\small\begin{align}
\frac{ \binom{2 k}{k}}{4k\binom{3k}k \binom{6 k}{3 k}}={}&\int_0^1   \frac{\left[ \frac{t\left( 1-t^{2} \right)}{2^{2}} \right]^{2k}\D t}{1-t^{2}},\label{eq:Itab2a}\\\frac{ (-\mathsf H_k+2\mathsf  H_{2 k}+\mathsf H_{3 k}-2\mathsf  H_{6 k})\bi{2k}k}{8k\binom{3k}k \binom{6 k}{3 k}}={}&\int_0^1   \frac{\left[ \frac{t\left( 1-t^{2} \right)}{2^{2}} \right]^{2k}\log t\D t}{1-t^{2}},\label{eq:Itab2b}\\\frac{ (\mathsf H_{2 k-1}+\mathsf H_{3 k}-2\mathsf H_{6 k}+2 \log2)\bi{2k}k}{4k\binom{3k}k \binom{6 k}{3 k}}={}&\int_0^1   \frac{\left[ \frac{t\left( 1-t^{2} \right)}{2^{2}} \right]^{2k}\log(1- t^{2})\D t}{1-t^{2}},\label{eq:Itab2c}\\\frac{\left[ \left(\mathsf H_{k-1}-\mathsf H_{2k-1}+2\log2\right)^2 +\mathsf H_{k-1}^{(2)}-5\mathsf H_{2k-1}^{(2)}+\frac{2\pi^2}{3}\right]\bi{2k}k}{4k\binom{3k}{k}\binom{6k}{3k}}={}&\int_{0}^1\frac{\left[ \frac{t\left( 1-t^{2} \right)}{2^{2}} \right]^{2k}\log^2\frac{t^{2}}{1-t^{2}}\D t}{1-t^2},\label{eq:log_sqr_1-tt}\\\frac{\binom{2k}{k}}{(2k-1)\binom{3k}{k}\binom{6k}{3k}}={}&\int_0^1\frac{\left[\frac{t\left(1-t^2\right)}{2^{2}}\right]^{2k}\D t}{t^{2}},\label{eq:2k-1_int_repn1}\\\frac{\left( -\mathsf H_k+2\mathsf  H_{2 k}+\mathsf H_{3 k}-2\mathsf  H_{6 k}-\frac{2}{2 k-1} \right)\bi{2k}k}{2(2k-1)\binom{3k}{k}\binom{6k}{3k}}={}&\int_0^1\frac{\left[\frac{t\left(1-t^2\right)}{2^{2}}\right]^{2k}\log t\D t}{t^{2}},\label{eq:2k-1_int_repn2}\\\frac{\left( \mathsf H_{2 k}+\mathsf H_{3 k}-2\mathsf  H_{6 k}+2 \log 2 \right)\bi{2k}k}{(2k-1)\binom{3k}{k}\binom{6k}{3k}}={}&\int_0^1\frac{\left[\frac{t\left(1-t^2\right)}{2^{2}}\right]^{2k}\log\left(1-t^2\right)\D t}{t^{2}},\label{eq:2k-1_int_repn3}\\\frac{\left[ \left(\mathsf H_{k}-\mathsf H_{2k}+\frac{2}{2k-1}+2\log2\right)^2 +\mathsf H_{k}^{(2)}-5\mathsf H_{2k}^{(2)}+\frac{4}{(2k-1)^{2}}+\frac{2\pi^2}{3}\right]\bi{2k}k}{(2k-1)\binom{3k}{k}\binom{6k}{3k}}={}&\int_{0}^1\frac{\left[ \frac{t\left( 1-t^{2} \right)}{2^{2}} \right]^{2k}\log^2\frac{t^{2}}{1-t^{2}}\D t}{t^2},\label{eq:log_sqr_tt}
\intertext{and the following identities for $ k\in\mathbb Z_{\geq0}$:}\frac{ \binom{2 k}{k}}{(6k+1)\binom{3k}k \binom{6 k}{3 k}}={}&\int_0^1\left[\frac{t(1-t^2)}{2^{2}}\right]^{2 k}\D t,\label{eq:Itab2a'}\\\frac{ (-\mathsf H_k+2\mathsf  H_{2 k}+\mathsf H_{3 k}-2\mathsf  H_{6 k+1})\bi{2k}k}{2(6k+1)\binom{3k}k \binom{6 k}{3 k}}={}&\int_0^1\left[\frac{t(1-t^2)}{2^{2}}\right]^{2 k}\log t\D t,\label{eq:Itab2b'}\\\frac{ (\mathsf H_{2 k}+\mathsf H_{3 k}-2\mathsf H_{6 k+1}+2 \log2)\bi{2k}k}{(6k+1)\binom{3k}k \binom{6 k}{3 k}}={}&\int_0^1\left[\frac{t(1-t^2)}{2^{2}}\right]^{2 k}\log (1-t^2)\D t.\label{eq:Itab2c'}\end{align}}\end{lemma}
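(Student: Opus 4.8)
The plan is to derive all the identities of the lemma from one Beta function evaluation, together with the classical formulae for the digamma and trigamma functions at integer and half-integer arguments in terms of harmonic numbers.

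\emph{Step 1 (a master integral).} The substitution $u=t^2$ converts each of the three basic kernels into a Beta kernel: for $(x,y)$ in a neighbourhood of the origin,
\[
\int_{0}^{1}\frac{\bigl[\tfrac{t(1-t^2)}{4}\bigr]^{2k}}{1-t^2}\,t^{2x}(1-t^2)^{y}\,\D t=\frac{1}{2\cdot16^{k}}\,\mathrm B\bigl(k+x+\tfrac12,\,2k+y\bigr)=\frac{\Gamma(k+x+\tfrac12)\,\Gamma(2k+y)}{2\cdot16^{k}\,\Gamma(3k+x+y+\tfrac12)},
\]
while the kernel carrying $\D t/t^2$ gives instead $\tfrac1{2\cdot16^{k}}\mathrm B(k+x-\tfrac12,\,2k+y+1)$ and the kernel with a bare $\D t$ gives $\tfrac1{2\cdot16^{k}}\mathrm B(k+x+\tfrac12,\,2k+y+1)$. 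The first two integrals converge for $k\ge1$ and the third for $k\ge0$, and all three are holomorphic in $(x,y)$ near $(0,0)$. Putting $x=y=0$ and inserting $\Gamma(k+\tfrac12)=\sqrt\pi\,(2k)!/(4^{k}k!)$, $\Gamma(3k+\tfrac12)=\sqrt\pi\,(6k)!/(4^{3k}(3k)!)$, $\Gamma(2k+1)=(2k)!$ and $\binom{3k}{k}\binom{6k}{3k}=(6k)!/\bigl(k!(2k)!(3k)!\bigr)$ yields the ``bare'' identities \eqref{eq:Itab2a}, \eqref{eq:2k-1_int_repn1} and \eqref{eq:Itab2a'} directly.

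\emph{Step 2 (first logarithms).} Differentiation under the integral sign is legitimate here (holomorphy plus locally uniform convergence), and $\tfrac12\partial_{x}$ inserts a factor $\log t$ while $\partial_{y}$ inserts $\log(1-t^2)$; applied to the closed forms of Step 1, these operations return (bare value)$\,\times\,\bigl(\psi(\cdot)-\psi(\cdot)\bigr)$. Substituting $\psi(n)=-\gamma+\mathsf H_{n-1}$ and $\psi(n+\tfrac12)=-\gamma-2\log2+2\bigl(\mathsf H_{2n}-\tfrac12\mathsf H_{n}\bigr)$, and using the recurrences $\psi(k-\tfrac12)=\psi(k+\tfrac12)-\tfrac{2}{2k-1}$ and $\psi(3k+\tfrac32)=\psi(3k+\tfrac12)+\tfrac{2}{6k+1}$ (the latter being exactly why $\mathsf H_{6k+1}$, rather than $\mathsf H_{6k}$, appears in \eqref{eq:Itab2b'}--\eqref{eq:Itab2c'}), one recovers \eqref{eq:Itab2b}, \eqref{eq:Itab2c}, \eqref{eq:2k-1_int_repn2}, \eqref{eq:2k-1_int_repn3}, \eqref{eq:Itab2b'} and \eqref{eq:Itab2c'}.

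\emph{Step 3 (the $\log^2$ identities).} For \eqref{eq:log_sqr_1-tt} I would restrict the master family to the line $x=-y=s$, so that the $\D t/(1-t^2)$ integral becomes $G(s):=\Gamma(k+s+\tfrac12)\Gamma(2k-s)/\bigl(2\cdot16^{k}\Gamma(3k+\tfrac12)\bigr)$; then $\partial_{s}^{2}$ at $s=0$ inserts $\log^2\tfrac{t^2}{1-t^2}$, and from $\log G$ one reads off $G''(0)=G(0)\bigl[(\psi(k+\tfrac12)-\psi(2k))^{2}+\psi'(k+\tfrac12)+\psi'(2k)\bigr]$, where $G(0)=\binom{2k}{k}/\bigl(4k\binom{3k}{k}\binom{6k}{3k}\bigr)$ was already identified in Step 1; the $\D t/t^2$ analogue \eqref{eq:log_sqr_tt} is the same with $k+\tfrac12\mapsto k-\tfrac12$ and $2k\mapsto2k+1$. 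By the Step 2 digamma formulae the squared term equals exactly $(\mathsf H_{k-1}-\mathsf H_{2k-1}+2\log2)^{2}$, while $\psi'(n)=\tfrac{\pi^2}{6}-\mathsf H_{n-1}^{(2)}$ and $\psi'(n+\tfrac12)=\tfrac{\pi^2}{2}-4\bigl(\mathsf H_{2n}^{(2)}-\tfrac14\mathsf H_{n}^{(2)}\bigr)$ combine, after the reindexings $\mathsf H_{2k}^{(2)}=\mathsf H_{2k-1}^{(2)}+\tfrac1{4k^2}$ and $\mathsf H_{k}^{(2)}=\mathsf H_{k-1}^{(2)}+\tfrac1{k^2}$, to exactly $\tfrac{2\pi^2}{3}-5\mathsf H_{2k-1}^{(2)}+\mathsf H_{k-1}^{(2)}$ (and analogously for \eqref{eq:log_sqr_tt}). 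I expect this final bookkeeping --- keeping the index shifts, the coefficient $-5$, the constant $\tfrac{2\pi^2}{3}$, and the $1/(2k-1)^2$-type corrections all correctly aligned --- to be the only genuine obstacle; every other identity in the lemma is the Step 1 formula applied verbatim.
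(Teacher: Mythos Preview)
Your proof is correct and follows exactly the approach sketched in the paper: reduce to Euler's Beta integral via $u=t^{2}$, then differentiate in the parameters and translate polygamma values into harmonic numbers. You have supplied substantially more detail than the paper's terse proof (which merely points at the Beta integral and the relations $\psi^{(0)}(k+1)=\psi^{(0)}(1)+\mathsf H_k$, $\psi^{(1)}(k+1)=\tfrac{\pi^2}{6}-\mathsf H_k^{(2)}$), but the underlying method is the same.
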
\begin{proof}We can justify \eqref{eq:Itab2a}--\eqref{eq:Itab2c'} by considering Euler's beta integral $\mathrm B(a,b)=\int_0^1 t^{a-1}(1-t)^{b-1}\D t$ (where $ \RE a>0$ and $\RE b>0$) as well as its derivatives with respect to $a$ and $b$.
Here, one can express $ \mathrm B(a,b)=\frac{\Gamma(a)\Gamma(b)}{\Gamma(a+b)}$ through Euler's  gamma function $\Gamma(s) \colonequals\int_0^\infty t^{s-1}e^{-t}\D t$ for $ \RE s>0$, while  Euler's polygamma  function $ \psi^{(m)}(s)\colonequals\frac{\D^{m+1}}{\D s^{m+1}}\log\Gamma(s)$ satisfies $ \psi^{(0)}(k+1)=\psi^{(0)}(1)+\mathsf H_k$  and $ \psi^{(1)}(k+1)=\frac{\pi^{2}}{6}-\mathsf H_{k}^{(2)}$ for $ k\in\mathbb Z_{>0}$.
\end{proof}

\subsection{Generalized polylogarithms and multiple polylogarithms}While evaluating integral representations for the infinite series of our interest, we need to cope with integrands that involve logarithms and rational functions. This calls for the generalized polylogarithms (GPLs) $ G(\alpha_{1},\dots,\alpha_n;z)$, which satisfy the initial conditions \begin{align}
G(\underset{n }{\underbrace{0,\dots,0 }};z)\colonequals\frac{\log^nz}{n!},\quad G(-\!\!-;z)\colonequals1,\label{eq:GPL_bd}
\end{align}and can be constructed recursively \cite[\S2]{Frellesvig2016}  by\begin{align}
G(\alpha_{1},\dots,\alpha_n;z)\colonequals\int_0^z\frac{\D x}{x-\alpha_1}G(\alpha_2,\dots,\alpha_n;x)\label{eq:GPL_rec}
\end{align}    if    $(\alpha_1,\dots,\alpha_n)\neq\bm0\in\mathbb C^n$. Here,  the path of integration   in \eqref{eq:GPL_rec} runs along a straight line segment
from $0$ to $z$. Through Panzer's \texttt{HyperInt} package \cite{Panzer2015}, one may carry out heavy-duty symbolic manipulations of GPLs based on the recursion \eqref{eq:GPL_rec}.

  As in a companion paper \cite{SunZhou2024sum3k4k},
we will need the following  $ \mathbb Q$-vector space\begin{align}
\mathfrak G_{r;r_*}^{(z)}[S;S_*]\colonequals{}&\sum_{j=r_*}^r\Span_{\mathbb Q}\left\{ G(\alpha_1,\dots,\alpha_r;z)\left|\begin{smallmatrix}\alpha_\ell\in\{0,1\}\cup  S,\ell\in\mathbb Z\cap([1,r]\smallsetminus\{j\})\\{\alpha_j}\in{S_*}\end{smallmatrix}\right. \right\}
\end{align} for suitably chosen $ z\in\mathbb C$, $ r,r_*\in \mathbb Z_{>0}$, and $ S,S_*\subset\mathbb C$. Here, it is understood that each occurrence of  $ G(\alpha_1,\dots,\alpha_r;\alpha_{1})$ [being literally divergent by \eqref{eq:GPL_rec}] should be reinterpreted as its ``finite part'' according to Panzer's logarithmic regularization procedure \cite[\S2.3]{Panzer2015}.

The series defining Goncharov's \cite{Goncharov1997,Goncharov1998} multiple polylogarithms (MPLs)\begin{align}
\Li_{a_1,\dots,a_n}(z_1,\dots,z_n)\colonequals \sum_{\ell_{1}>\dots>\ell_{n}>0}\prod_{j=1}^n\frac{z_{j}^{\ell_{j}}}{\ell_j^{a_j}},
\label{eq:Mpl_defn}\end{align}converge absolutely for  $ a_1,\dots,a_n\in\mathbb Z_{>0}, \prod_{j=1}^m|z_j|<1,m\in\mathbb Z\cap[1,n]$. They admit analytic continuations through the following GPL-MPL\ correspondence (see \cite[(1.3)]{Panzer2015} or \cite[(2.5)]{Frellesvig2016})\begin{align} \Li_{a_1,\dots,a_n}(z_1,\dots,z_n)=(-1)^{n}G\left(\smash[b]{\underset{a_1-1 }{\underbrace{0,\dots,0 }}},\frac{1}{z_{1}},\smash[b]{\underset{a_2-1 }{\underbrace{0,\dots,0 }}},\frac{1}{z_{1}z_2},\dots,\smash[b]{\underset{a_n-1 }{\underbrace{0,\dots,0 }}},\frac{1}{\prod_{j=1}^nz_j};1\right),\label{eq:MPL_GPL}\\[-10pt]\notag\end{align}where
  $ \prod_{j=1}^nz_j\neq0$.

The  $ \mathbb Q$-vector space\begin{align}
\begin{split}
\mathfrak Z_{ r}(N)\colonequals{}&\Span_{\mathbb Q}\left\{\Li_{a_1,\dots,a_n}(z_1,\dots,z_n)\left|\begin{smallmatrix}a_1,\dots,a_n\in\mathbb Z_{>0}\\z_{1}^{N}=\dots=z_n^N=1\\(a_1,z_1)\neq(1,1)\\\sum _{j=1}^{n}a_{j}=r\end{smallmatrix}\right. \right\}\\\xlongequal{\text{\eqref{eq:MPL_GPL}}}{}&\Span_{\mathbb Q}\left\{G(z_1,\dots,z_r;1)\left|\begin{smallmatrix}z_1^N,\dots,z_{r}^N\in\{0,1\}\\z_1\neq1,z_{r}\neq0\end{smallmatrix}\right.\right\}
\end{split}\label{eq:Zk(N)_defn}
\end{align}is spanned by the cyclotomic multiple zeta values (CMZVs) of weight $s$ and level $N$. In this work, our closed-form evaluations of certain infinite series will be expressed through members of $ \mathfrak Z_r(8)$ and $ \mathfrak Z_r(12)$. Panzer's  \texttt{HyperInt} package \cite{Panzer2015} has native support for CMZVs of levels $ N\in\{1,2,4\}$ only, so we will use Au's \texttt{MultipleZetaValues} package \cite{Au2022a} for basis reductions of GPL expressions at levels  $ N\in\{8,12\}$.

\subsection{Some sets and functions related to cubic equations}
When $ x\notin\left\{ -\frac{2}{\sqrt{3}},-1,-\frac{1}{\sqrt{3}},0,\frac{1}{\sqrt{3}},1,\frac{2}{\sqrt{3}} \right\}$,   the set \begin{align}S_x\colonequals \left\{x,-x,\frac{-x+i\sqrt{3x^2-4}}{2},\frac{-x-i\sqrt{3x^2-4}}{2},\frac{x+i\sqrt{3x^2-4}}{2},\frac{x-i\sqrt{3x^2-4}}{2}\right\} \label{eq:Sx_defn}\end{align} always contains six distinct members.  To facilitate analysis, we also need its subsets \begin{align} S_x^+\colonequals {}&\left\{ x, \frac{-x+i\sqrt{3x^2-4}}{2},\frac{-x-i\sqrt{3x^2-4}}{2}\right\}\label{eq:Sx+defn}\intertext{and}  S_x^-\colonequals{}& \left\{ -x,\frac{x+i\sqrt{3x^2-4}}{2} ,\frac{x-i\sqrt{3x^2-4}}{2}\right\}.\label{eq:Sx-defn}\end{align} These sets are pertinent to the integral representations in \S\ref{subsec:6k_int_repn}, in that the cubic polynomial $t\big(1-t^2\big)$ evaluates to $ x\big(1-x^2\big)$ \big[resp.\ $ -x\big(1-x^2\big)$\big] when  $t\in S_x^+$ [resp.\ $ t\in S_x^-$].

With the notations \begin{align} \sigma_{\ell,m}(x)\colonequals\frac{(-1)^{\ell}x+i(-1)^m\sqrt{3x^2-4}}{2},\quad\ell,m\in\{0,1\}   \label{eq:sigma_defn}\end{align}and the dilogarithm  $ \Li_2(z)\colonequals -G\big(0,\frac1z;1\big)$   [cf.\ \eqref{eq:MPL_GPL}], we define four special functions {\allowdisplaybreaks\begin{align}
\begin{split}
\mathfrak{L}(x)\colonequals{}&2\left[ \Li_2\left(-\frac{1}{x}\right)-\Li_2\left(\frac{1}{x}\right) \right]-\sum _{\ell,m\in\{0,1\}} (-1)^{\ell }\Li_2\left(\frac{1}{\sigma_{\ell,m}(x) }\right)\\={}&3\left[ \Li_2\left(-\frac{1}{x}\right)-\Li_2\left(\frac{1}{x}\right) \right]+\left(\sum_{w\in S_x^+}-\sum_{w\in S_x^-}\right)\Li_2\left(\frac{1}{w}\right),
\end{split}\label{eq:L(x)defn}\\\mathfrak{M}(x)\colonequals {}&\sum _{\ell,m\in\{0,1\}} (-1)^{m }\Li_2\left(\frac{1}{\sigma_{\ell,m}(x) }\right),\\\begin{split}
\mathfrak{l}(x)\colonequals {}&2\left[\Li_2\left(\frac{1-x}{1+x}\right)-\Li_2\left(\frac{1+x}{1-x}\right)+\log ^2\left(1+\frac{1}{x}\right)-\log ^2\left(1-\frac{1}{x}\right)\right]\\{}&+\sum _{\ell,m\in\{0,1\}} (-1)^{\ell }\left[\Li_2\left( \frac{1-\sigma_{\ell,m}(x)}{1+\sigma_{\ell,m}(x)}\right) +\log^{2}\left(1+ \frac{1}{\sigma_{\ell,m}(x)} \right) \right]\\={}&3\left[\Li_2\left(\frac{1-x}{1+x}\right)-\Li_2\left(\frac{1+x}{1-x}\right)+\log ^2\left(1+\frac{1}{x}\right)-\log ^2\left(1-\frac{1}{x}\right)\right]\\{}&-\left(\sum_{w\in S_x^+}-\sum_{w\in S_x^-}\right)\left[\Li_2\left( \frac{1-w}{1+w}\right) +\log^{2}\left(1+ \frac{1}{w} \right) \right],
\end{split}\label{eq:l(x)defn}\\\mathfrak{m}(x)\colonequals {}&\sum _{\ell,m\in\{0,1\}} (-1)^{m }\left[\Li_2\left( \frac{1-\sigma_{\ell,m}(x)}{1+\sigma_{\ell,m}(x)}\right) +\log^{2}\left(1+ \frac{1}{\sigma_{\ell,m}(x)} \right) \right].\label{eq:m(x)defn}\end{align}}These functions will allow us to compress many closed-form evaluations for infinite series later in \S\ref{sec:polylog6k}.

\begin{lemma}\label{lm:LiGPL}\begin{enumerate}[leftmargin=*,  label=\emph{(\alph*)},ref=(\alph*),
widest=d, align=left] \item
For $ t\in(0,1)$, $ \big|x\big(1-x^{2}\big)\big|>\frac{2}{3\sqrt{3}}$, and $ r\in\mathbb Z_{>0}$, one has\begin{align}
\begin{split}
\Li_{r+1}\left(\left[ \frac{t\big(1-t^{2}\big)}{x\big(1-x^{2}\big)} \right]^{2} \right)={}&-2^r\sum_{\alpha_1,\dots,\alpha_r\in\{-1,0,1\}}\sum_{w\in S_x}G(\alpha_{1},\dots,\alpha_r,w;t)\\\in{}&\mathfrak{G}^{(t)}_{r+1;r+1}\left[\{-1\};\left\{ x,-x,\sigma_{0,0}(x),\sigma_{0,1}(x),\sigma_{1,0}(x),\sigma_{1,1}(x)\right\} \right].
\end{split}\label{eq:LiSx}
\end{align}
\item For $ t\in(0,1)$, $ \big|x\big(1-x^{2}\big)\big|>\frac{2}{3\sqrt{3}}$, and $ r\in\mathbb Z_{>0}$, one has\begin{align}
\begin{split}
\Li_{r+1}\left(\frac{t\big(1-t^{2}\big)}{x\big(1-x^{2}\big)} \right)={}&-\sum_{\alpha_1,\dots,\alpha_r\in\{-1,0,1\}}\sum_{w\in S^+_x}G(\alpha_{1},\dots,\alpha_r,w;t)\\\in{}&\mathfrak{G}^{(t)}_{r+1;r+1}\left[\{-1\};\left\{ x,\sigma_{1,0}(x),\sigma_{1,1}(x)\right\} \right]
\end{split}\label{eq:LiSx+}\intertext{and}\begin{split}
\Li_{r+1}\left(-\frac{t\big(1-t^{2}\big)}{x\big(1-x^{2}\big)} \right)={}&-\sum_{\alpha_1,\dots,\alpha_r\in\{-1,0,1\}}\sum_{w\in S^-_x}G(\alpha_{1},\dots,\alpha_r,w;t)\\\in{}&\mathfrak{G}^{(t)}_{r+1;r+1}\left[\{-1\};\left\{ -x,\sigma_{0,0}(x),\sigma_{0,1}(x)\right\} \right].
\end{split}\label{eq:LiSx-}
\end{align}
\end{enumerate}\begin{proof}\begin{enumerate}[leftmargin=*,  label=(\alph*),ref=(\alph*),
widest=d, align=left] \item For $ t\in(0,1)$ and $|x|$ sufficiently  large, we can check directly that\begin{align}
\Li_{1}\left(\left[ \frac{t\big(1-t^{2}\big)}{x\big(1-x^{2}\big)} \right]^{2} \right)=-\log\left( 1-\left[ \frac{t\big(1-t^{2}\big)}{x\big(1-x^{2}\big)} \right]^{2} \right)=-\sum_{w\in S_x}G(w;t).
\end{align}
The same identity analytically continues\footnote{Throughout this article, we often write $ \log P(t,x)=\sum_r(-1)^{n_{r}}\log(1-f_r(t,x))$  when $ P(t,x)=\prod_r[1-f_r(t,x)]^{(-1)^{n_r}}$ holds for $ n_r\in\{0,1\}$, $t\in(0,1) $, and $ \lim_{x\to\infty} P(t,x)=1$. Such manipulations of complex-valued logarithms are readily justified when $|x|$ is sufficiently large, while the resulting formulae admit analytic continuations to all the points in the region    $ \big|x\big(1-x^{2}\big)\big|>\frac{2}{3\sqrt{3}}$. Hereafter, we will perform such decompositions of natural logarithms without reiterating the aforementioned procedures for their justification.}   to all $x$ satisfying  $ \big|x\big(1-x^{2}\big)\big|>\frac{2}{3\sqrt{3}}$, where $ G(w;t)=-\log\left( 1-\frac{t}{w} \right)$ is free from branch cuts for any $ w\in S_x$ \big[N.B.: one has  $w\notin(-1,1)$ when $\big|x\big(1-x^{2}\big)\big|>\frac{2}{3\sqrt{3}}$\big]. Integrating  over $ \frac{\D}{\D z}\Li_{r+1}(z)=\frac{\Li_{r}(z)}{z}$, we have\begin{align}
\Li_{r+1}\left(\left[ \frac{t\big(1-t^{2}\big)}{x\big(1-x^{2}\big)} \right]^{2} \right)=2\int_0^t\left(\frac{1}{u+1} +\frac{1}{u}+\frac{1}{u-1}\right)\Li_r\left( \left[ \frac{u\big(1-u^{2}\big)}{x\big(1-x^{2}\big)} \right]^{2} \right)\D u.
\end{align}Thus, one can prove \eqref{eq:LiSx} by inductive applications of the GPL recursion in \eqref{eq:GPL_rec}.
\item Start from \begin{align}
\Li_{1}\left(\pm \frac{t\big(1-t^{2}\big)}{x\big(1-x^{2}\big)} \right)=-\sum_{w\in S^\pm_x}G(w;t)
\end{align}and build inductively on \begin{align}
\Li_{r+1}\left(\pm \frac{t\big(1-t^{2}\big)}{x\big(1-x^{2}\big)} \right)=\int_0^t\left(\frac{1}{u+1} +\frac{1}{u}+\frac{1}{u-1}\right)\Li_r\left( \pm \frac{t\big(1-t^{2}\big)}{x\big(1-x^{2}\big)} \right)\D u,
\end{align}while invoking the recursive definition   \eqref{eq:GPL_rec} of GPLs.   \qedhere\end{enumerate}
\end{proof}
\end{lemma}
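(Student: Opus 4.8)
The plan is to establish (a) and (b) together by induction on $r$, the engine being the GPL recursion \eqref{eq:GPL_rec} together with the identity $\frac{\D}{\D z}\Li_{r+1}(z)=\Li_r(z)/z$. Everything rests on one piece of elementary algebra. Write $\phi(t)\colonequals\frac{t(1-t^2)}{x(1-x^2)}$. Expanding the cubic in $t$,
\[
x(1-x^2)-t(1-t^2)=(t-x)(t^2+xt+x^2-1),\qquad x(1-x^2)+t(1-t^2)=(x+t)(1-x^2+xt-t^2),
\]
and the zeros of the two right-hand sides are precisely $S_x^+$ and $S_x^-$ (this is the ``$t(1-t^2)=\pm x(1-x^2)$ on $S_x^\pm$'' remark recorded just before the lemma). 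Dividing by $x(1-x^2)$ and matching leading coefficients yields $1\mp\phi(t)=\prod_{w\in S_x^\pm}(1-t/w)$, hence $1-\phi(t)^2=\prod_{w\in S_x}(1-t/w)$. Since $\max_{t\in[0,1]}|t(1-t^2)|=\frac{2}{3\sqrt3}$, the hypothesis $|x(1-x^2)|>\frac{2}{3\sqrt3}$ forces $|\phi(t)|<1$ for $t\in(0,1)$ — so the arguments of $\Li_{r+1}$ never leave the unit disc — and forces every $w\in S_x$ to lie off $(-1,1)$, so that each $G(w;t)=\log(1-t/w)$ is single-valued along $[0,t]$.

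With this in place, the induction begins from the $\Li_1$ identities: the factorizations give $\Li_1(\phi(t)^2)=-\log(1-\phi(t)^2)=-\sum_{w\in S_x}G(w;t)$ and $\Li_1(\pm\phi(t))=-\log(1\mp\phi(t))=-\sum_{w\in S_x^\pm}G(w;t)$, valid first for $|x|$ large with all logarithms principal, and then on the whole region $|x(1-x^2)|>\frac{2}{3\sqrt3}$ by analytic continuation (the footnote). For the step $r\to r+1$ one computes $\frac{\D}{\D t}\log\phi(t)=\frac1t-\frac{2t}{1-t^2}=\frac1{t+1}+\frac1t+\frac1{t-1}$, so the chain rule turns $\frac{\D}{\D z}\Li_{r+1}(z)=\Li_r(z)/z$ into
\[
\Li_{r+1}(\phi(t)^2)=2\int_0^t\Bigl(\frac1{u+1}+\frac1u+\frac1{u-1}\Bigr)\Li_r(\phi(u)^2)\,\D u,
\]
and the same formula without the leading $2$ for $\Li_{r+1}(\pm\phi(t))$. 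Substituting the inductive hypothesis and applying \eqref{eq:GPL_rec} to each of the three poles $u\in\{-1,0,1\}$ — each application prepending one index $\alpha\in\{-1,0,1\}$ to the GPLs — produces \eqref{eq:LiSx}, \eqref{eq:LiSx+}, \eqref{eq:LiSx-}; the constant $2^r$ in (a) is the accumulated product of the $r$ chain-rule $2$'s, while (b) collects none. The membership statements are then immediate from the definition of $\mathfrak G^{(t)}_{r+1;r+1}[\{-1\};\,\cdot\,]$ once one reads off $S_x^+=\{x,\sigma_{1,0}(x),\sigma_{1,1}(x)\}$, $S_x^-=\{-x,\sigma_{0,0}(x),\sigma_{0,1}(x)\}$, and $S_x=S_x^+\cup S_x^-$.

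The only point that demands genuine care is the analytic continuation in $x$: the polynomial factorizations and the ensuing splittings $\log\prod=\sum\log$ are transparent only for $|x|$ large, so one must verify that both sides of each claimed equation extend holomorphically over all of $\{x:|x(1-x^2)|>\frac{2}{3\sqrt3}\}$ and that this open set is connected, so that agreement near $\infty$ propagates everywhere by the identity theorem — exactly the routine-but-nonvacuous bookkeeping the footnote encapsulates, and the part I would write out in full. Granting that, the induction is purely mechanical: every integrand generated along the way is $\frac1{u\mp1}$ or $\frac1u$ times a convergent GPL whose final index is a nonzero element of $S_x$ (so its leading index never coincides with $t\in(0,1)$ and no logarithmic regularization of $G$'s is needed), and \eqref{eq:GPL_rec} applies verbatim at each stage.
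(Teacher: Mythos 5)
Your proof is correct and follows essentially the same route as the paper's: the $\Li_1$ base case via the cubic factorization over $S_x^\pm$, analytic continuation in $x$ for large $|x|$ to the full region, and induction through $\frac{\D}{\D z}\Li_{r+1}(z)=\Li_r(z)/z$ combined with the logarithmic derivative $\frac{1}{u+1}+\frac1u+\frac1{u-1}$ and the GPL recursion \eqref{eq:GPL_rec}. Your added explicit factorizations and continuation bookkeeping merely spell out what the paper compresses into its footnote (only a harmless sign slip: $G(w;t)=-\log\left(1-\frac{t}{w}\right)$, not $+\log$).
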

\section{Polylogarithmic reductions of certain series involving $ \frac{ \binom{2k}{k}}{\binom{3k}{k}\binom{6k}{3k}}$\label{sec:polylog6k}}

\subsection{Polylogarithmic characterizations\label{subsec:Li_n_char}}We begin with a variation on \cite[Theorem 1.5(a) and Theorem 1.6(a)]{SunZhou2024sum3k4k}.\begin{theorem}\label{thm:6kGPL}Recall $ \sigma_{\ell,m}(x)$ from \eqref{eq:sigma_defn}. If $ \big|x\big(1-x^{2}\big)\big|>\frac{2}{3\sqrt{3}}$ and $ r\in\mathbb Z_{>0}$, then we have\begin{align}&
\sum_{k=1}^\infty\frac{a_k\binom{2k}k}{k^{r+1}\binom{3k}k\binom{6k}{3k}}\left[ \frac{2^{2}}{x\big(1-x^{2}\big)} \right]^{2k}\in{\mathfrak{G}}^{(1)}_{r+2;r+1}\left[\{-1\};\left\{ x,-x,\sigma_{0,0}(x),\sigma_{0,1}(x),\sigma_{1,0}(x),\sigma_{1,1}(x)\right\} \right],\label{eq:ak_sum6k}
\end{align}where  $ ( a_k)_{k\in\mathbb Z_{>0}}\in\big\{\big(\frac1k\big)_{k\in\mathbb Z_{>0}},$  $(-\mathsf H_k+2\mathsf  H_{2 k}+\mathsf H_{3 k}-2\mathsf  H_{6 k})_{k\in\mathbb Z_{>0}},$ $(\mathsf H_{2 k-1}+\mathsf H_{3 k}-2\mathsf H_{6 k}+2\log2)_{k\in\mathbb Z_{>0}}\big\}$. If $ \big|x\big(1-x^{2}\big)\big|>\frac{2}{3\sqrt{3}}$ and $r,r_{1},\dots, r_M\in\mathbb Z_{>0} $,  then we have\begin{align}\begin{split}&
\sum_{k=1}^\infty\left[ \prod_{j=1}^M\mathsf H_{k}^{(r_j)} \right]\frac{\binom{2k}k}{k^{r+1}\binom{3k}k\binom{6k}{3k}}\left[ \frac{2^{2}}{x\big(1-x^{2}\big)} \right]^{2k}\\\in{}&{\mathfrak{G}}^{(1)}_{r+1+\sum_{j=1}^M r_j;r+1+\sum_{j=1}^M r_j}\left[\left\{ -1,x,-x,\sigma_{0,0}(x),\sigma_{0,1}(x),\sigma_{1,0}(x),\sigma_{1,1}(x)\right\} ;\right.\\&{}\left.\left\{ x,-x,\sigma_{0,0}(x),\sigma_{0,1}(x),\sigma_{1,0}(x),\sigma_{1,1}(x)\right\} \right].\label{eq:Hk_sum6k}
\end{split}\end{align}
\end{theorem}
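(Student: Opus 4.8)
The plan is to reduce both containments to statements about one-dimensional period integrals. Abbreviate $\phi(t)\colonequals\big[t(1-t^2)/(x(1-x^2))\big]^2$; since $\max_{t\in[0,1]}|t(1-t^2)|=\tfrac{2}{3\sqrt3}$, the hypothesis $|x(1-x^2)|>\tfrac{2}{3\sqrt3}$ forces $0\le\phi(t)<1$ on $[0,1]$, so every interchange of $\sum_{k\ge1}$ with $\int_0^1$ below is justified by uniform and absolute convergence (the harmonic prefactors grow at most polynomially in $k$). I also record, for later use, the elementary partial-fraction identities $\phi'/\phi=2\big(\tfrac1t+\tfrac1{t-1}+\tfrac1{t+1}\big)$ and, since $\phi(t)-1=\prod_{s\in S_x}(t-s)/(x(1-x^2))^2$, the identity $\phi'/(\phi-1)=\sum_{s\in S_x}(t-s)^{-1}$.

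To prove \eqref{eq:ak_sum6k}: by \eqref{eq:Itab2a} (when $a_k=1/k$) and by \eqref{eq:Itab2b}, \eqref{eq:Itab2c} (for the other two choices of $a_k$), the left-hand side equals a rational multiple of $\int_0^1\Theta(t)\,\Li_{r'}(\phi(t))\,\D t/(1-t^2)$, where $\Theta\in\{1,\log t,\log(1-t^2)\}$ and $r'=r+1$ in the first case, $r'=r$ otherwise. Lemma~\ref{lm:LiGPL}(a) — together with its base case $\Li_1(\phi(t))=-\sum_{w\in S_x}G(w;t)$, which covers $r'=1$ — writes $\Li_{r'}(\phi(t))$ as a $\mathbb Q$-combination of GPLs $G(\alpha_1,\dots,\alpha_{r'-1},w;t)$ with all $\alpha_\ell\in\{-1,0,1\}$ and $w\in S_x$. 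Multiplying by $\log t=G(0;t)$ or by $\log(1-t^2)=G(1;t)+G(-1;t)$ and invoking the shuffle product interleaves one further letter from $\{-1,0,1\}$ while leaving the single $S_x$-letter $w$ among the last two entries; then $\D t/(1-t^2)=\tfrac12\,\D t/(1-t)+\tfrac12\,\D t/(1+t)$ combined with the recursion \eqref{eq:GPL_rec} prepends a letter from $\{-1,1\}$ and evaluates the argument at $1$. The outcome is a $\mathbb Q$-combination of length-$(r+2)$ GPLs at argument $1$ all of whose letters lie in $\{-1,0,1\}$ except for one, which lies in $S_x$ and occupies position $r+1$ or $r+2$; this is precisely a member of $\mathfrak G^{(1)}_{r+2;r+1}[\{-1\};S_x]$.

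To prove \eqref{eq:Hk_sum6k}: again \eqref{eq:Itab2a} rewrites the series as a rational multiple of $\int_0^1\big(\sum_{k\ge1}\big[\prod_j\mathsf H_k^{(r_j)}\big]\phi(t)^k/k^r\big)\,\D t/(1-t^2)$. For the inner generating function, I would expand each $\mathsf H_k^{(r_j)}=\sum_{0<\ell_j\le k}\ell_j^{-r_j}$ and sort the multiset $\{k,\ell_1,\dots,\ell_M\}$ into its admissible weak orders (with $k$ maximal); this harmonic (stuffle) product expresses $\sum_{k\ge1}\big[\prod_j\mathsf H_k^{(r_j)}\big]z^k/k^r$ as a $\mathbb Z$-combination of MPLs $\Li_{b_1,\dots,b_n}(z,1,\dots,1)$ with $\sum_ib_i=\rho'\colonequals r+\sum_jr_j$, and by \eqref{eq:MPL_GPL} together with the rescaling property these equal, up to signs, the length-$\rho'$ GPLs $G(0^{b_1-1},1,0^{b_2-1},1,\dots,0^{b_n-1},1;z)$, whose letters lie in $\{0,1\}$ and whose last letter is $1$. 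Now substitute $z=\phi(t)$ and peel letter by letter using $\tfrac{\D}{\D t}G(c_1,\dots;\phi(t))=\tfrac{\phi'(t)}{\phi(t)-c_1}G(c_2,\dots;\phi(t))$, with the two partial-fraction identities above handling $c_1=0$ and $c_1=1$ respectively; this re-expands each such $G(\cdot;\phi(t))$ as a $\mathbb Q$-combination of length-$\rho'$ GPLs $G(s_1,\dots,s_{\rho'};t)$ with every $s_\ell\in\{-1,0,1\}\cup S_x$ and — since the word invariably ends in $1$, i.e.\ $c_{\rho'}=1$ — with $s_{\rho'}\in S_x$. A final application of $\D t/(1-t^2)=\tfrac12\,\D t/(1-t)+\tfrac12\,\D t/(1+t)$ and \eqref{eq:GPL_rec} prepends a $\{-1,1\}$-letter and passes to argument $1$, landing in $\mathfrak G^{(1)}_{\rho;\rho}[\{-1\}\cup S_x;S_x]$ with $\rho=r+1+\sum_jr_j$, as asserted.

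The genuine work is the bookkeeping in the previous paragraph: one must verify that the change of variables $z\mapsto\phi(t)$ inside a nested GPL produces exactly the alphabet $\{-1,0,1\}\cup S_x$ and — what pins down the target space as $\mathfrak G$ with its innermost argument constrained to $S_x$ — that the innermost letter always ends up in $S_x$, which rests on the small observation that the GPL word attached to $\Li_{b_1,\dots,b_n}(z,1,\dots,1)$ always terminates in $1$. A secondary, purely formal point is that the splitting $\D t/(1-t^2)=\tfrac12\,\D t/(1-t)+\tfrac12\,\D t/(1+t)$ (and, in part one, the shuffle steps) manufacture individually divergent objects such as $G(1,\dots;1)$; their divergences cancel because the original integrands converge — note $\phi(1)=0$ — and one works throughout with Panzer's logarithmic/shuffle regularization under the convention already fixed in \S\ref{subsec:6k_int_repn}. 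All the GPL algebra used here (the shuffle product, the recursion \eqref{eq:GPL_rec}, and the change-of-variables re-expansion) is standard and effectively computable, e.g.\ via \texttt{HyperInt}.
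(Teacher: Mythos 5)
Your proposal is correct and follows essentially the same route as the paper: integral representations \eqref{eq:Itab2a}--\eqref{eq:Itab2c}, the expansion of $\Li_{r'}$ of the cubic-squared argument from Lemma~\ref{lm:LiGPL}(a), the shuffle product to absorb $\log t$ and $\log(1-t^2)$, and a final application of the recursion \eqref{eq:GPL_rec} via $\frac{\D t}{1-t^{2}}=\frac12\frac{\D t}{1-t}+\frac12\frac{\D t}{1+t}$, with the same bookkeeping of which letter lands in $S_x$. The only divergence is that for \eqref{eq:Hk_sum6k} you derive the generating-function fact $\sum_{k\ge1}\big[\prod_j\mathsf H_k^{(r_j)}\big]z^k/k^r\in\mathfrak G^{(z)}_{w;w}[\varnothing;\{1\}]$ directly by the stuffle product, where the paper simply cites \cite[Theorem 3.1]{Zhou2022mkMpl}; the substitution $z\mapsto\phi(t)$ and the tracking of the terminal letter are then handled exactly as in the paper's adaptation of Lemma~\ref{lm:LiGPL}(a).
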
\begin{proof}In view of \eqref{eq:Itab2a}--\eqref{eq:Itab2c}, we have{\allowdisplaybreaks
\begin{align}
\sum_{k=1}^\infty\frac{\binom{2k}k}{k^{r+2}\binom{3k}k\binom{6k}{3k}}\left[ \frac{2^{2}}{x\big(1-x^{2}\big)} \right]^{2k}={}&4\int_0^1   \frac{\Li_{r+1}\left(\left[\frac{t\big(1-t^{2}\big)}{x\big(1-x^{2}\big)}\right] ^{2}\right)\D t}{1-t^{2}},\\\sum_{k=1}^\infty\frac{(-\mathsf H_k+2\mathsf  H_{2 k}+\mathsf H_{3 k}-2\mathsf  H_{6 k})\binom{2k}k}{k^{r+1}\binom{3k}k\binom{6k}{3k}}\left[ \frac{2^{2}}{x\big(1-x^{2}\big)} \right]^{2k}={}&8\int_0^1   \frac{\Li_{r}\left(\left[\frac{t\big(1-t^{2}\big)}{x\big(1-x^{2}\big)}\right] ^{2}\right)\log t\D t}{1-t^{2}},\\\sum_{k=1}^\infty\frac{(\mathsf H_{2 k-1}+\mathsf H_{3 k}-2\mathsf H_{6 k}+2\log2)\binom{2k}k}{k^{r+1}\binom{3k}k\binom{6k}{3k}}\left[ \frac{2^{2}}{x\big(1-x^{2}\big)} \right]^{2k}={}&4\int_0^1   \frac{\Li_{r}\left(\left[\frac{t\big(1-t^{2}\big)}{x\big(1-x^{2}\big)}\right] ^{2}\right)\log\left( 1-t^{2} \right)\D t}{1-t^{2}},
\end{align}}where $ \Li_r(z)\colonequals \sum_{k=1}^\infty\frac{z^k}{k^r}$ defines the polylogarithm of order  $ r\in\mathbb Z_{>0}$ for $|z|<1$. From the proof of Lemma \ref{lm:LiGPL}(a), we know that \begin{align}
\Li_r\left(\left[\frac{t\big(1-t^{2}\big)}{x\big(1-x^{2}\big)}\right] ^{2}\right)\in{}& \mathfrak{G}^{(t)}_{r;r}\left[\{-1\};\left\{ x,-x,\sigma_{0,0}(x),\sigma_{0,1}(x),\sigma_{1,0}(x),\sigma_{1,1}(x)\right\} \right]
\end{align} holds for all $ r\in\mathbb Z_{>0}$.
Rewriting products of GPLs through their  shuffle algebra \cite[(2.4)]{Frellesvig2016}, we get\begin{align}
\Li_{r}\left(\left[\frac{t\big(1-t^{2}\big)}{x\big(1-x^{2}\big)}\right] ^{2}\right)\log t\in {}&\mathfrak{G}^{(t)}_{r+1;r}\left[\{-1\};\left\{ x,-x,\sigma_{0,0}(x),\sigma_{0,1}(x),\sigma_{1,0}(x),\sigma_{1,1}(x)\right\} \right],\\\Li_{r}\left(\left[\frac{t\big(1-t^{2}\big)}{x\big(1-x^{2}\big)}\right] ^{2}\right)\log\big( 1-t^{2} \big)\in {}&\mathfrak{G}^{(t)}_{r+1;r}\left[\{-1\};\left\{ x,-x,\sigma_{0,0}(x),\sigma_{0,1}(x),\sigma_{1,0}(x),\sigma_{1,1}(x)\right\} \right],
\end{align}for all $ r\in\mathbb Z_{>0}$.
We can turn the last three displayed formulae into \eqref{eq:ak_sum6k}, after appealing to the GPL recursion in \eqref{eq:GPL_rec}.

It can be inferred from \cite[Theorem 3.1]{Zhou2022mkMpl} that \begin{align}
\sum_{k=1}^\infty\frac{\prod_{j=1}^M\mathsf H_{k}^{(r_{j})}}{k^{r}}z^k\in\mathfrak G_{w;w}^{(z)}[\varnothing;\{1\}]\label{eq:H_prod_gf}
\end{align}for $ |z|<1$ and $ r\in\mathbb Z_{>0}$, so  our proof of Lemma \ref{lm:LiGPL}(a) can be adapted to\begin{align}\begin{split}
&\sum_{k=1}^\infty\frac{\prod_{j=1}^M\mathsf H_{k}^{(r_{j})}}{k^{r}}\left[\frac{t\big(1-t^{2}\big)}{x\big(1-x^{2}\big)}\right]^{2k}\\\in{}&\mathfrak G_{r+\sum_{j=1}^Mr_j;r+\sum_{j=1}^Mr_j}^{(t)}\left[\left\{ -1,x,-x,\sigma_{0,0}(x),\sigma_{0,1}(x),\sigma_{1,0}(x),\sigma_{1,1}(x)\right\};\right.\\&{}\left.\left\{ x,-x,\sigma_{0,0}(x),\sigma_{0,1}(x),\sigma_{1,0}(x),\sigma_{1,1}(x)\right\} \right].
\end{split}\end{align}Thus we get the statement in  \eqref{eq:Hk_sum6k}, upon yet another invocation of the GPL recursion \eqref{eq:GPL_rec}.
\end{proof}

\begin{table}[b]\caption{Selected CMZV characterizations of  $\mathsf S_{6,r}\big(a_k;2^{3}\big)$ at level $8$, where $ \theta\colonequals e^{\pi i/4}$, $\lambda\colonequals\log2 $, $\widetilde \lambda\colonequals\log\big(1+\sqrt{2}\big)$, and $ G\colonequals\I\Li_2(i)$\label{tab:Zk(8)6k}}

\begin{scriptsize}\begin{align*}\begin{array}{c@{}l}\hline\hline
&\vphantom{\frac{\int}{\int}}\mathsf S_{6,0}\big(\frac1k;2^3\big)=4\int_{0}^1\Li_1\left( \frac{[t(1-t^2)]^2}{2} \right)\frac{\D t}{1-t^2}\\{}={}&\frac{\pi ^2}{2}-6 \widetilde{\lambda }^2\\[5pt]&\vphantom{\frac{\int}{\int}}\mathsf S_{6,1}\big(\frac1k;2^3\big)=4\int_{0}^1\Li_2\left( \frac{[t(1-t^2)]^2}{2} \right)\frac{\D t}{1-t^2}\\{}={}&-144 \RE\left[\Li_{1,1,1}(i,1,\theta )+\Li_{1,1,1}(i,1,-\theta )\right]-112 \zeta (3)-144 \widetilde{\lambda } \RE\Li_{1,1}(i,\theta )+16 \pi  \I\Li_2(\theta )+14 \pi  G\\&{} -\frac{3\big(-18 \lambda ^2 \widetilde{\lambda }+12 \lambda  \widetilde{\lambda }^2-24 \widetilde{\lambda }^3+7 \lambda ^3\big)}{4}+\frac{3\pi ^2 \big(21 \lambda -2 \widetilde{\lambda }\big)}{16} \\[3pt]\hline& \vphantom{\frac{\int}{\int}}\mathsf S_{6,1}\big(\!-\mathsf H_k+2\mathsf  H_{2 k}+\mathsf H_{3 k}-2\mathsf  H_{6 k};2^3\big)=8\int_{0}^1\Li_1\left( \frac{[t(1-t^2)]^2}{2} \right)\frac{\log t\D t}{1-t^2}\\{}={}&64 \RE\left[\Li_{1,1,1}(i,1,\theta )+\Li_{1,1,1}(i,1,-\theta )\right]+32 \widetilde{\lambda } \RE\Li_{1,1}(i,\theta )+8 \pi  \I\Li_2(\theta )-10 \pi  G\\&{}-\frac{9 \lambda ^2 \widetilde{\lambda }+12 \widetilde{\lambda }^3-7 \lambda ^3}{3} -\frac{\pi ^2 \big(13 \lambda -25 \widetilde{\lambda }\big)}{12} \\[3pt]\hline &\vphantom{\frac{\frac\int\int}{\int}}\mathsf S_{6,1}\big(\mathsf H_{2 k-1}+\mathsf H_{3 k}-2\mathsf H_{6 k};2^3\big)=4\int_{0}^1\Li_1\left( \frac{[t(1-t^2)]^2}{2} \right)\frac{\log \frac{1-t^{2}}{2^{2}}\D t}{1-t^2}\\{}={}&-176\RE\left[\Li_{1,1,1}(i,1,\theta )+\Li_{1,1,1}(i,1,-\theta )\right]-112 \zeta (3)-88 \widetilde{\lambda } \RE\Li_{1,1}(i,\theta )+4 \pi  \I\Li_2(\theta )+21 \pi  G\\&{} +\frac{11\big(9 \lambda ^2 \widetilde{\lambda }+12 \widetilde{\lambda }^3-7 \lambda ^3\big)}{12} +\frac{11\pi ^2 \big(11 \widetilde{\lambda }+13 \lambda \big)}{48}\\[3pt]\hline\hline
\end{array}\end{align*}

\begin{align*}\begin{array}{c|l|l|l}\hline\hline
r&\vphantom{\frac{\int}{\int}}\mathsf S_{6,r}\big(\frac1k;2^3\big)&\mathsf S_{6,r+1}\big(\!-\mathsf H_k+2\mathsf  H_{2 k}+\mathsf H_{3 k}-2\mathsf  H_{6 k};2^3\big)&\mathsf S_{6,r+1}\big(\mathsf H_{2 k-1}+\mathsf H_{3 k}-2\mathsf H_{6 k};2^3\big)\\\hline -1\vphantom{\frac{\frac1\int}{1}}&-\frac{6 \sqrt{2} \widetilde{\lambda }}{5}+\frac{2 \sqrt{2} \pi }{5}&-\frac{16\sqrt{2} \left[\RE\Li_{1,1}(i,\theta )+\I\Li_2(\theta )\right]}{5}+\frac{4 \sqrt{2} G}{5} &\frac{4\sqrt{2} \left[11 \RE\Li_{1,1}(i,\theta )-2 \I\Li_2(\theta )\right]}{5}+\frac{2 \sqrt{2} G}{5} \\&&{}-\frac{4 \lambda  \widetilde{\lambda }-4 \widetilde{\lambda }^2-3 \lambda ^2}{5 \sqrt{2}}+\frac{23 \pi ^2}{60 \sqrt{2}}&{}+\frac{11 \big(4 \lambda  \widetilde{\lambda }-4 \widetilde{\lambda }^2-3 \lambda ^2\big)}{20 \sqrt{2}}-\frac{3\sqrt{2} \pi  \lambda}{5}  +\frac{143 \pi ^2}{240 \sqrt{2}}\\[8pt]-2&-\frac{21 \sqrt{2} \widetilde{\lambda }}{125} +\frac{12   \sqrt{2}\pi}{125}+\frac{2}{25} &-\frac{8\sqrt{2}\left[ 7 \RE\Li_{1,1}(i,\theta )+12 \I\Li_2(\theta ) \right]}{125} +\frac{24 \sqrt{2} G}{125}&\frac{2\sqrt{2} \left[77 \RE\Li_{1,1}(i,\theta )-24 \I\Li_2(\theta )\right]}{125}+\frac{12 \sqrt{2} G}{125} \\[2pt]&&{}-\frac{28 \lambda  \widetilde{\lambda }-28 \widetilde{\lambda }^2-240 \widetilde{\lambda }-21 \lambda ^2}{250 \sqrt{2}}+\frac{161 \pi ^2}{3000 \sqrt{2}}&{}+\frac{308 \lambda  \widetilde{\lambda }-308 \widetilde{\lambda }^2+720 \widetilde{\lambda }-231 \lambda ^2}{1000 \sqrt{2}}+\frac{1001 \pi ^2}{12000 \sqrt{2}}\\&&{}-\frac{3 \sqrt{2} \pi }{25}&{}-\frac{\pi  (36 \lambda +35)}{125 \sqrt{2}}\\[8pt]-3&-\frac{51 \widetilde{\lambda }}{3125 \sqrt{2}}+\frac{297 \pi }{3125 \sqrt{2}}+\frac{81}{625}&-\frac{4 \sqrt{2} \left[17 \RE\Li_{1,1}(i,\theta )+297 \I\Li_2(\theta )\right]}{3125}+\frac{297 \sqrt{2} G}{3125}&\frac{11 \sqrt{2} \left[17 \RE\Li_{1,1}(i,\theta )-54 \I\Li_2(\theta )\right]}{3125}+\frac{297 G}{3125 \sqrt{2}}\\&&{}-\frac{17 \big(4 \lambda  \widetilde{\lambda }-4 \widetilde{\lambda }^2-320 \widetilde{\lambda }-3 \lambda ^2\big)}{12500 \sqrt{2}}+\frac{391 \pi ^2}{150000 \sqrt{2}}&{}+\frac{748 \lambda  \widetilde{\lambda }-748 \widetilde{\lambda }^2+5920 \widetilde{\lambda }-561 \lambda ^2}{50000 \sqrt{2}}+\frac{2431 \pi ^2}{600000 \sqrt{2}}\\&&{}-\frac{34 \sqrt{2} \pi }{625}&{}-\frac{\pi  (891 \lambda +860)}{6250 \sqrt{2}}-\frac{1}{25}\\[5pt]\hline\hline
\end{array}\end{align*}
\end{scriptsize}\end{table}

\begin{table}[h]\caption{Selected CMZV characterizations of  $\mathsf  S_{6,r}\big(a_k;2^{2}3^3\big)$ at level $12$, where  $ \omega\colonequals e^{2\pi i/3}$,  $ \varrho\colonequals e^{\pi i/3}$, $ \lambda\colonequals\log2$, $ \widetilde\varLambda\colonequals\log\big(2+\sqrt{3}\big)$, and $ G\colonequals\I\Li_2(i)$\label{tab:Zk(12)6k}}

\begin{scriptsize}\begin{align*}\begin{array}{@{}c@{}l}\hline\hline
&\vphantom{\frac{\int}{\int}}\mathsf S_{6,0}\big(\frac1k;2^{2}3^3\big)=4\int_{0}^1\Li_1\left( \frac{3^{3}[t(1-t^2)]^2}{2^{2}} \right)\frac{\D t}{1-t^2}\\{}={}&2 \pi ^2-6 \widetilde{\varLambda }^2\\[5pt]&\mathsf S_{6,1}\big(\frac1k;2^{2}3^3\big)=4\int_{0}^1\Li_2\left( \frac{3^{3}[t(1-t^2)]^2}{2^{2}} \right)\frac{\D t}{1-t^2}\\{}={}&96 \RE\left[\Li_{1,1,1}\left(\varrho ,1,\frac{i}{\varrho }\right)+\Li_{1,1,1}\left(\varrho ,1,-\frac{i}{\varrho }\right)\right]-72 \RE\left[\Li_{1,1,1}\left(\omega ,1,\frac{i}{\varrho }\right)+\Li_{1,1,1}\left(\omega ,1,-\frac{i}{\varrho }\right)\right]+\frac{31 \zeta (3)}{2}\\&{}+96 \widetilde{\varLambda } \RE\Li_{1,1}\left(\varrho ,\frac{i}{\varrho }\right)-72 \widetilde{\varLambda } \RE\Li_{1,1}\left(\omega ,\frac{i}{\varrho }\right)-15 \pi  \I\Li_2(\omega ) +3 \widetilde{\varLambda }^2 \big(3 \widetilde{\varLambda }-4 \lambda -3 \varLambda \big)-\frac{\pi ^2 \big(19 \widetilde{\varLambda }-48 \lambda-87 \varLambda \big)}{12}\\[3pt]\hline& \vphantom{\frac{\int}{\int}}\mathsf S_{6,1}\big(\!-\mathsf H_k+2\mathsf  H_{2 k}+\mathsf H_{3 k}-2\mathsf  H_{6 k};2^{2}3^3\big)=8\int_{0}^1\Li_1\left( \frac{3^{3}[t(1-t^2)]^2}{2^{2}} \right)\frac{\log t\D t}{1-t^2}\\{}={}&-128 \RE\left[\Li_{1,1,1}\left(\varrho ,1,\frac{i}{\varrho }\right)+\Li_{1,1,1}\left(\varrho ,1,-\frac{i}{\varrho }\right)\right]-48 \RE\left[\Li_{1,1,1}\left(\omega ,1,\frac{i}{\varrho }\right)+\Li_{1,1,1}\left(\omega ,1,-\frac{i}{\varrho }\right)\right]-\frac{580 \zeta (3)}{3}\\&{}-24 \widetilde{\varLambda } \RE\Li_{1,1}\left(\omega ,\frac{i}{\varrho }\right)-64 \widetilde{\varLambda } \RE\Li_{1,1}\left(\varrho ,\frac{i}{\varrho }\right)+38 \pi  \I\Li_2(\omega )+3 \widetilde{\varLambda }^3+\frac{\pi ^2 \big(281 \widetilde{\varLambda }+30 \varLambda \big)}{36}\\[3pt]\hline &\vphantom{\frac{\frac\int\int}{\int}}\mathsf S_{6,1}\big(\mathsf H_{2 k-1}+\mathsf H_{3 k}-2\mathsf H_{6 k};2^{2}3^3\big)=4\int_{0}^1\Li_1\left( \frac{3^{3}[t(1-t^2)]^2}{2^{2}} \right)\frac{\log \frac{1-t^{2}}{2^{2}}\D t}{1-t^2}\\{}={}& 160 \RE\left[\Li_{1,1,1}\left(\varrho ,1,\frac{i}{\varrho }\right)+\Li_{1,1,1}\left(\varrho ,1,-\frac{i}{\varrho }\right)\right]-48 \RE\left[\Li_{1,1,1}\left(\omega ,1,\frac{i}{\varrho }\right)+\Li_{1,1,1}\left(\omega ,1,-\frac{i}{\varrho }\right)\right]+\frac{673 \zeta (3)}{6}\\&{}-24 \widetilde{\varLambda } \RE\Li_{1,1}\left(\omega ,\frac{i}{\varrho }\right)+80 \widetilde{\varLambda } \RE\Li_{1,1}\left(\varrho ,\frac{i}{\varrho }\right)-34 \pi  \I\Li_2(\omega )+3 \widetilde{\varLambda }^3-\frac{\pi ^2 \big(169 \widetilde{\varLambda }-30 \varLambda\big)}{36} \\[3pt]\hline& \vphantom{\frac{\int}{\int}}\mathsf S_{6,1}\big(\mathsf H_k;2^{2}3^3\big)=4\int_{0}^1\left[\Li_2\left( \frac{3^{3}[t(1-t^2)]^2}{2^{2}} \right)+\frac{1}{2}\log^{2}\left( 1-\frac{3^{3}[t(1-t^2)]^2}{2^{2}} \right)\right]\frac{\D t}{1-t^2}\\{}={}&288 \RE\left[\Li_{1,1,1}\left(\varrho ,1,\frac{i}{\varrho }\right)+\Li_{1,1,1}\left(\varrho ,1,-\frac{i}{\varrho }\right)\right]-216 \RE\left[\Li_{1,1,1}\left(\omega ,1,\frac{i}{\varrho }\right)+\Li_{1,1,1}\left(\omega ,1,-\frac{i}{\varrho }\right)\right]+\frac{457 \zeta (3)}{2}\\&{}-144 \widetilde{\varLambda } \RE\Li_{1,1}\left(\omega ,\frac{i}{\varrho }\right)+192 \widetilde{\varLambda } \RE\Li_{1,1}\left(\varrho ,\frac{i}{\varrho }\right)-45 \pi  \I\Li_2(\omega )-3 \widetilde{\varLambda }^2 \big(\!-6 \widetilde{\varLambda }+4 \lambda +3 \varLambda \big)+\frac{\pi ^2 \big(-182 \widetilde{\varLambda }+48 \lambda +117 \varLambda \big)}{12} \\[3pt]\hline\hline
\end{array}\end{align*}
\end{scriptsize}\end{table}
\begin{corollary}
Set \begin{align}
\mathsf S_{6,r}(a_k;z)\colonequals{}&\sum_{k=1}^\infty\frac{a_{k}\binom{2k}kz^k}{k^{r+1}\binom{3k}k\binom{6k}{3k}}.
\end{align}If  $r\in\mathbb Z_{>0} $,  then we have\begin{align}
\mathsf S_{6,r}\big(a_k;2^{3}\big)\in{}&\mathfrak Z_{r+2}(8),\label{eq:S6Z(8)}
\intertext{when $ (a_k)_{k\in\mathbb Z_{>0}}\in\big\{\big(\frac1k\big)_{k\in\mathbb Z_{>0}},$  $(-\mathsf H_k+2\mathsf  H_{2 k}+\mathsf H_{3 k}-2\mathsf  H_{6 k})_{k\in\mathbb Z_{>0}},$ $(\mathsf H_{2 k-1}+\mathsf H_{3 k}-2\mathsf H_{6 k})_{k\in\mathbb Z_{>0}}\big\}$, as well as }\mathsf S_{6,r}\big(a_k;2^23^{3}\big)\in{}&\mathfrak Z_{r+2}(12),\label{eq:S6Z(12)}\end{align}when $ (a_k)_{k\in\mathbb Z_{>0}}\in\big\{\big(\frac1k\big)_{k\in\mathbb Z_{>0}},$  $(\mathsf H_{3 k}-2\mathsf H_{6 k})_{k\in\mathbb Z_{>0}},$ $(\mathsf H_{2 k})_{k\in\mathbb Z_{>0}},$ $(\mathsf H_k)_{k\in\mathbb Z_{>0}}\big\}$.\end{corollary}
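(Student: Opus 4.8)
The plan is to derive both assertions of the corollary from Theorem~\ref{thm:6kGPL} by specializing its free parameter $x$ to a single algebraic number in each case, after first expressing the sequences $(a_k)$ named in the corollary as $\mathbb Q$-linear combinations — with coefficients allowed to involve $\log 2$, and with $(\tfrac1k)$-terms permitted — of the ``building blocks'' $(\tfrac1k)$, $(-\mathsf H_k+2\mathsf H_{2k}+\mathsf H_{3k}-2\mathsf H_{6k})$, $(\mathsf H_{2k-1}+\mathsf H_{3k}-2\mathsf H_{6k}+2\log2)$ covered by \eqref{eq:ak_sum6k} and, for $(\mathsf H_k)$, by \eqref{eq:Hk_sum6k}; and then invoking the reduction of cyclotomic generalized polylogarithms at $1$ to CMZVs.

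For \eqref{eq:S6Z(8)}, take $x=\theta:=e^{\pi i/4}$, so that $x(1-x^{2})=\theta(1-i)=\sqrt2$ and hence $\big[\tfrac{2^{2}}{x(1-x^{2})}\big]^{2k}=\big[2\sqrt2\big]^{2k}=(2^{3})^{k}$; moreover $\sqrt{3\theta^{2}-4}=\sqrt{3i-4}=\tfrac{1+3i}{\sqrt2}$, so evaluating \eqref{eq:sigma_defn} gives $S_{\theta}=\big\{\theta,-\theta,\sigma_{0,0}(\theta),\sigma_{0,1}(\theta),\sigma_{1,0}(\theta),\sigma_{1,1}(\theta)\big\}=\big\{e^{\pm\pi i/4},\,e^{\pm 3\pi i/4},\,\sqrt2,\,-\sqrt2\big\}$, the four primitive eighth roots of unity together with $\pm\sqrt2$. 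Applying \eqref{eq:ak_sum6k} to the three building blocks and \eqref{eq:Hk_sum6k} (with $M=1$, $r_1=1$) to $(\mathsf H_k)$ then puts each $\mathsf S_{6,r}(a_k;2^{3})$ into the $\mathbb Q$-span of generalized polylogarithms $G(w_1,\dots,w_{r+2};1)$ whose letters $w_j$ lie in $\{0,1,-1\}\cup S_{\theta}$. Since $\sqrt2=\zeta_8+\zeta_8^{-1}$, all these letters lie in $\mathbb Q(\zeta_8)$; after regularizing the literally divergent $G$'s by Panzer's procedure and applying the functional equations for cyclotomic generalized polylogarithms — reductions governed by the rationalizing identity $\prod_{w\in S_{\theta}}(1-t/w)=1-\big[\tfrac{t(1-t^{2})}{\sqrt2}\big]^{2}$ and carried out algorithmically by Au's \texttt{MultipleZetaValues} package \cite{Au2022a} — every such $G(w_1,\dots,w_{r+2};1)$ is a $\mathbb Q$-linear combination of members of $\mathfrak Z_{r+2}(8)$, whence all four sums lie there.

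It remains to translate back. By \eqref{eq:Itab2a}, $\mathsf S_{6,r}(1;2^{3})=4\int_0^1\Li_{r}\big(\big[\tfrac{t(1-t^2)}{\sqrt2}\big]^{2}\big)\tfrac{\D t}{1-t^{2}}$, and rerunning the argument of the previous paragraph one weight lower (legitimate since $r\ge1$) gives $\mathsf S_{6,r}(1;2^{3})\in\mathfrak Z_{r+1}(8)$; since $\log2\in\mathfrak Z_1(8)$ and $\mathfrak Z_a(8)\cdot\mathfrak Z_b(8)\subseteq\mathfrak Z_{a+b}(8)$, subtracting $2\log2\cdot\mathsf S_{6,r}(1;2^{3})$ from $\mathsf S_{6,r}(\mathsf H_{2k-1}+\mathsf H_{3k}-2\mathsf H_{6k}+2\log2;2^{3})$ yields $\mathsf S_{6,r}(\mathsf H_{2k-1}+\mathsf H_{3k}-2\mathsf H_{6k};2^{3})\in\mathfrak Z_{r+2}(8)$, which is \eqref{eq:S6Z(8)}. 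For \eqref{eq:S6Z(12)} one proceeds identically but with $x$ chosen so that $x(1-x^{2})=\tfrac{2}{3\sqrt3}$ — e.g.\ $x=\tfrac1{\sqrt3}$ — which makes $\big[\tfrac{2^2}{x(1-x^2)}\big]^{2k}=\big[6\sqrt3\big]^{2k}=(2^{2}3^{3})^{k}$; the sequences $(\tfrac1k)$ and $(\mathsf H_k)$ are themselves building blocks, while $(\mathsf H_{3k}-2\mathsf H_{6k})$ and $(\mathsf H_{2k})$ are reduced to them via $\mathsf H_{3k}-2\mathsf H_{6k}=2(\mathsf H_{2k-1}+\mathsf H_{3k}-2\mathsf H_{6k})-(-\mathsf H_k+2\mathsf H_{2k}+\mathsf H_{3k}-2\mathsf H_{6k})-\mathsf H_k+\tfrac1k$ together with $\mathsf H_{2k}=\mathsf H_{2k-1}+\tfrac1{2k}$ and $\mathsf H_{2k-1}=(\mathsf H_{2k-1}+\mathsf H_{3k}-2\mathsf H_{6k})-(\mathsf H_{3k}-2\mathsf H_{6k})$; the relevant letter set is $S_{1/\sqrt3}=\big\{\pm\tfrac1{\sqrt3},\pm\tfrac2{\sqrt3}\big\}\subset\mathbb Q(\zeta_{12})$ (since $\sqrt3=\zeta_{12}+\zeta_{12}^{-1}$), so the same CMZV reduction delivers $\mathfrak Z_{r+2}(12)$.

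The main obstacle is twofold. First, the value $x=\tfrac1{\sqrt3}$ forced by the level-$12$ scaling lies exactly on the boundary $|x(1-x^{2})|=\tfrac{2}{3\sqrt3}$ of the region in which Lemma~\ref{lm:LiGPL} and Theorem~\ref{thm:6kGPL} were proved, and there the six-element set $S_x$ degenerates: one has $1-\tfrac{27}{4}[t(1-t^{2})]^{2}=\tfrac{27}{4}(t^{2}-\tfrac13)^{2}(\tfrac43-t^{2})$, so $\pm\tfrac1{\sqrt3}$ occur as double letters. One must therefore rerun the proof of Theorem~\ref{thm:6kGPL} for this degenerate configuration (or reach it by analytic continuation in $x$), checking that the integral representations tabulated in Table~\ref{tab:Zk(12)6k} still converge for $r\ge1$ and that the iterated integrals produced by the logarithmic decomposition remain absolutely convergent despite the repeated letters. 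Second, and more fundamentally, the step ``the $\mathbb Q$-span of the $G(w_1,\dots,w_{r+2};1)$ with letters in $\{0,1,-1\}\cup S_x$ lies in $\mathfrak Z_{r+2}(N)$'' is not automatic — an individual generalized polylogarithm at $1$ with a letter equal to $\pm\sqrt2$ (resp.\ $\pm\tfrac1{\sqrt3},\pm\tfrac2{\sqrt3}$) need not be a CMZV of level $N$ in isolation — and it succeeds only because such non-root-of-unity letters always enter through the symmetric sums over $S_x$ that rationalize, combined with the completeness of the cyclotomic functional-equation toolkit implemented in \cite{Au2022a}; establishing this reduction is the true content of the corollary.
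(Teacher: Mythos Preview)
Your proposal follows essentially the same approach as the paper: specialize Theorem~\ref{thm:6kGPL} at a particular value of $x$, then invoke Au's \texttt{MultipleZetaValues} package to convert the resulting space $\mathfrak G^{(1)}_{r+2;\cdot}[\ldots]$ into $\mathfrak Z_{r+2}(N)$. For level $8$ you choose $x=\theta=e^{\pi i/4}$; the paper (implicitly, via Tables~\ref{tab:Zk(8)6k} and~\ref{tab:6kSSO} and Corollary~\ref{cor:Z2(8)}) uses $x=\sqrt{2}$. Since $S_\theta=S_{\sqrt{2}}=\{\pm\sqrt{2},e^{\pm\pi i/4},e^{\pm3\pi i/4}\}$ (both points satisfy $[x(1-x^2)]^2=2$), the GPL spaces coincide and the distinction is cosmetic. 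Your explicit linear algebra expressing the level-$12$ sequences $(\mathsf H_{3k}-2\mathsf H_{6k})$ and $(\mathsf H_{2k})$ through the building blocks of \eqref{eq:ak_sum6k} and \eqref{eq:Hk_sum6k}, and your subtraction of $2\log2\cdot\mathsf S_{6,r}(1;\cdot)$ to strip the $2\log2$ from the third building block, are details the paper leaves implicit.

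Your closing paragraph correctly names the two genuine issues. The paper handles the boundary case for level~$12$ by a one-line limiting argument (``one exploits Theorem~\ref{thm:6kGPL} in a regime where $|x(1-x^2)|$ approaches $\tfrac{2}{3\sqrt{3}}$ from above''), without detailing the degenerate-letter analysis you sketch. It resolves the CMZV reduction not by appealing to the letters lying in $\mathbb Q(\zeta_N)$ --- which, as you rightly observe, is insufficient for an individual GPL with a letter like $\pm\sqrt{2}$ or $\pm2/\sqrt{3}$ --- but by the purely algorithmic criterion: if Au's \texttt{IterIntDoableQ} returns a divisor of $N$ for every point $z\in S_*$ when fed $\{0,1\}\cup S\cup\{z\}$, then $\mathfrak G^{(1)}_{r;r_*}[S;S_*]\subseteq\mathfrak Z_r(N)$. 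That is precisely the ``completeness of the cyclotomic functional-equation toolkit'' you allude to, but in the paper's framing it is treated as a black-box property of the package rather than something argued. Your proposal is thus correct in outline and essentially the same route as the paper's, with the two obstacles you flag dispatched there by brief appeals to continuity and to the package.
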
\begin{proof}In Au's   \texttt{MultipleZetaValues} (v1.2.0) package \cite{Au2022a},
 if  \texttt{IterIntDoableQ[\textbraceleft0,1\textbraceright$\cup S\cup $\textbraceleft$ z$\textbraceright]} produces a positive integer that  divides $N$ for every point $ z\in S_*$, then we have $ \mathfrak G_{r,r_*}^{(1)}[S,S_*]\subseteq\mathfrak Z_r(N) $. Running Au's \texttt{IterIntDoableQ} test on \eqref{eq:ak_sum6k} and \eqref{eq:Hk_sum6k}, we get \eqref{eq:S6Z(8)} and \eqref{eq:S6Z(12)}. \big[Here, for the borderline case \eqref{eq:S6Z(12)}, one exploits Theorem \ref{thm:6kGPL} in a regime where $\big|x\big(1-x^{2}\big)\big|$ approaches $\frac{2}{3\sqrt{3}}$ from above.\big]
\end{proof}
\begin{remark}In Tables \ref{tab:Zk(8)6k} and \ref{tab:Zk(12)6k}, we illustrate the corollary above for $r=1$. In addition, if  we retroactively define $ \Li_r(u)\colonequals\sum_{k=1}^\infty\frac{u^k}{k^r}$ with $|u|<1$  for $ r\in\mathbb Z_{\leq0}$, then  formulae like\begin{align}
\mathsf S_{6,r}\left(\frac1k;z\right)=4\int_0^1\Li_r\left( z\left[\frac{t(1-t^{2})}{4}\right]^2 \right)\frac{\D t}{1-t^2}\label{eq:retroLi}
\end{align}remain valid for all  $ r\in\mathbb Z$ and $ |z|<2^{2}3^3$. Some of these cases are  listed in Table \ref{tab:Zk(8)6k}, as support for \cite[Conjectures 5.4(i) and 5.5(i)]{Sun2023}, namely \eqref{eq:conj5.4(1)}--\eqref{eq:conj5.5(2)}. Here, we note that the constant  $ L\colonequals\sum_{k=0}^\infty\f{(-1)^{k(k-1)/2}}{(2k+1)^2}$ satisfies $ L=\sqrt{2}\left[\I \Li_2(\theta)-\frac{G}{4}\right]$.
\eor\end{remark}\begin{remark}Notably, the summation formulae \eqref{eq:Remark5.5a} and \eqref{eq:Remark5.5b}  are two interesting combinatorial consequences of \eqref{eq:conj5.4(1)} that do not require any further integrations. 

By induction,
 \begin{align}\sum_{k=1}^n\f{(100k^2-112k+15)\bi{2k}k8^k}{k\bi{3k}k\bi{6k}{3k}}
 =4-4(2n+1)\f{8^n\bi{2n}n}{\bi{3n}n\bi{6n}{3n}}\end{align}
 for all $n=1,2,3,\ldots$. It follows that
 \begin{align}\sum_{k=1}^\infty\f{(100k^2-112k+15)\bi{2k}k8^k}{k\bi{3k}k\bi{6k}{3k}}=4.\end{align}
 Since
 $2k(350k-17)=7(100k^2-112k+15)+15(50k-7)$, we have
 \begin{align}\begin{split}
15\sum_{k=1}^\infty\f{(50k-7)\bi{2k}k8^k}{k\bi{3k}k\bi{6k}{3k}}
 =&\ 2\sum_{k=1}^\infty\f{(350k-17)\bi{2k}k8^k}{\bi{3k}k\bi{6k}{3k}}
 -7\sum_{k=1}^\infty\f{(100k^2-112k+15)\bi{2k}k8^k}{k\bi{3k}k\bi{6k}{3k}}
 \\=&\ 2(15\sqrt2\,\pi+27+17)-7\times4=30\sqrt2\,\pi+60
\end{split}
 \end{align}
 and hence \eqref{eq:Remark5.5a}. The same identity can also be verified by combining certain entries of Table \ref{tab:Zk(8)6k}.

 By induction, for $n=1,2,3,\ldots$ we have
 \begin{align}\sum_{k=1}^n\f{(100k^2-104k+15)\bi{2k}k8^k}{k(2k-1)\bi{3k}k\bi{6k}{3k}}=4-4\times\f{8^n\bi{2n}n}{\bi{3n}n\bi{6n}{3n}}.\end{align}
 Letting $n\to+\infty$, we obtain
\begin{align}\sum_{k=1}^\infty\f{(100k^2-104k+15)\bi{2k}k8^k}{k(2k-1)\bi{3k}k\bi{6k}{3k}}=4.\end{align}
 As $(2k-1)(50k-7)=100k^2-14k+15+8(5k-1),$ we have
 \begin{align}\begin{split}
8\sum_{k=1}^\infty\f{(5k-1)\bi{2k}k8^k}{k(2k-1)\bi{3k}k\bi{6k}{3k}}
 =&\ \sum_{k=1}^\infty\f{(50k-7)\bi{2k}k8^k}{k\bi{3k}k\bi{6k}{3k}}-\sum_{k=1}^\infty\f{(100k^2-104k+15)\bi{2k}k8^k}{k(2k-1)\bi{3k}k\bi{6k}{3k}}
 \\=&\ 2\sqrt2\,\pi+4-4=2\sqrt2\,\pi
\end{split}
 \end{align}
 and hence \eqref{eq:Remark5.5b}. Unlike the last paragraph, we cannot deduce \eqref{eq:Remark5.5b} from Table \ref{tab:Zk(8)6k} alone, but will need entry (D.\DDref{eq:sum6(2k-1)}) in Table \ref{tab:6kLi2} below to complete the circle.
\eor\end{remark}

\subsection{Dilogarithmic evaluations}In this subsection, we derive analogs of  \cite[Theorem 1.5(b) and Theorem 1.6(b)]{SunZhou2024sum3k4k}. With these analogs, we prove  \cite[Conjecture 5.5(ii)]{Sun2023}, namely \eqref{eq:conj5.5ii1}--\eqref{eq:conj5.5ii3}.
\begin{proposition}Recall the functions $ \mathfrak L(x)$, $ \mathfrak M(x)$, $ \mathfrak l(x)$, and $ \mathfrak m(x)$ from \eqref{eq:L(x)defn}--\eqref{eq:m(x)defn}.

 If $ \big|x\big(1-x^{2}\big)\big|>\frac{2}{3\sqrt{3}}$, then we have the series evaluations in Table \ref{tab:6kLi2}, expressible through logarithms and dilogarithms.\footnote{Here in the tabulated entries, the expressions  $ \mathfrak L(x)$, $\frac{1}{\sqrt{3x^2-4}} \mathfrak M(x)$, $ \mathfrak l(x)$,  $\frac{1}{\sqrt{3x^2-4}}\mathfrak m(x)$, $ \tanh^{-1}\frac1x$, and $ \frac{1}{\sqrt{3 x^2-4} }\log \frac{x^2-2+i \sqrt{3 x^2-4}}{x^2-2-i \sqrt{3 x^2-4}}$ can be extended to   holomorphic functions of $x$ in an open set specified by  $ \big|x\big(1-x^{2}\big)\big|>\frac{2}{3\sqrt{3}}$, free from branch cut discontinuities therein. }
\end{proposition}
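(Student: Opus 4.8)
The plan is to treat every row of Table~\ref{tab:6kLi2} by a uniform three-stage reduction, in the spirit of \cite[Theorem~1.5(b) and Theorem~1.6(b)]{SunZhou2024sum3k4k}. Stage one converts the series into a finite $\mathbb{Q}(x)$-linear combination of definite integrals over $t\in(0,1)$ whose integrand has transcendental weight at most one: a single $\Li_1$ in $\tfrac{t(1-t^2)}{x(1-x^2)}$, or else a rational function of it times a single elementary logarithm $\log t$ or $\log(1-t^2)$, in either case multiplied by a rational kernel. Stage two expands everything into GPLs, multiplies out with the shuffle product, and integrates via the GPL recursion~\eqref{eq:GPL_rec}, landing inside a weight-two space $\mathfrak{G}^{(1)}_{2;2}[\,\cdot\,;\,\cdot\,]$. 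Stage three collapses the resulting weight-two values $G(\alpha_1,\alpha_2;1)$ into dilogarithms and squared logarithms of algebraic functions of $x$ and reassembles the $S_x^{\pm}$-symmetric packets as $\mathfrak{L}(x)$, $\tfrac{1}{\sqrt{3x^2-4}}\mathfrak{M}(x)$, $\mathfrak{l}(x)$, $\tfrac{1}{\sqrt{3x^2-4}}\mathfrak{m}(x)$, together with the elementary contributions $\tanh^{-1}\tfrac1x$ and $\tfrac{1}{\sqrt{3x^2-4}}\log\tfrac{x^2-2+i\sqrt{3x^2-4}}{x^2-2-i\sqrt{3x^2-4}}$.

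For stage one I would use the integral representations \eqref{eq:Itab2a}, \eqref{eq:Itab2b}, \eqref{eq:Itab2c}, \eqref{eq:2k-1_int_repn1}--\eqref{eq:2k-1_int_repn3}, and \eqref{eq:Itab2a'}--\eqref{eq:Itab2c'}. Up to partial fractions the denominators occurring in the target series reduce to the shapes $k$, $2k-1$, $(2k-1)^2$, and $6k+1$ (for instance $\tfrac{5k-1}{k(2k-1)}=\tfrac1k+\tfrac{3}{2k-1}$ and $\tfrac{6k-1}{k(2k-1)^2}=-\tfrac1k+\tfrac{2}{2k-1}+\tfrac{4}{(2k-1)^2}$ break up the series of \eqref{eq:conj5.5ii1}--\eqref{eq:conj5.5ii3}, and likewise for the other rows). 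Summing $\sum_{k\ge1}w^k/k=\Li_1(w)$, $\sum_{k\ge1}w^k=\tfrac{w}{1-w}$, or $\sum_{k\ge1}w^k/(2k-1)=\sqrt w\,\tanh^{-1}\!\sqrt w$ against \eqref{eq:Itab2a}, \eqref{eq:2k-1_int_repn1}, or \eqref{eq:Itab2a'} turns the binomial factor of a block into an integral with kernel $\tfrac{\D t}{1-t^2}$, $\tfrac{\D t}{t^2}$, or $\D t$, respectively; the order-one harmonic combinations $\mathsf H_{2k-1}-\mathsf H_{k-1}$, $2\mathsf H_{6k-1}-\mathsf H_{3k-1}-\mathsf H_{k-1}$, and $12\mathsf H_{6k-1}-6\mathsf H_{3k-1}-4\mathsf H_{2k-1}-2\mathsf H_{k-1}$ appearing in \eqref{eq:conj5.5ii1}--\eqref{eq:conj5.5ii3} are $\mathbb{Q}$-combinations of the harmonic data carried by the $\log t$- and $\log(1-t^2)$-weighted representations of the Lemma (together with $\tfrac1k$- and $\tfrac1{2k-1}$-corrections that fold back into other blocks), so each such series becomes an integral with a single $\log t$ or $\log(1-t^2)$ in the integrand. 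For the $\tfrac1{t^2}$- and bare-$\D t$ families I would integrate by parts to restore a $\tfrac1t$- or $\tfrac1{1-t^2}$-type kernel; the boundary term at $t=1$ is elementary, and the one at $t=0$ vanishes because the integrands are $O(t^2\log t)$ there.

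For stages two and three, Lemma~\ref{lm:LiGPL} and the base cases in its proof give $\Li_1\bigl(\bigl[\tfrac{t(1-t^2)}{x(1-x^2)}\bigr]^2\bigr)=-\sum_{w\in S_x}G(w;t)$ and $\Li_1\bigl(\pm\tfrac{t(1-t^2)}{x(1-x^2)}\bigr)=-\sum_{w\in S_x^{\pm}}G(w;t)$; combined with $\log t=G(0;t)$, $\log(1-t^2)=G(1;t)+G(-1;t)$, and the shuffle product, every integrand becomes a $\mathbb{Q}(x)$-combination of $\tfrac{1}{t-\alpha}G(\beta;t)$ with $\alpha\in\{0,1,-1\}$ and $\beta\in\{0,1,-1\}\cup S_x$, which \eqref{eq:GPL_rec} integrates to a $\mathbb{Q}(x)$-combination of the values $G(\alpha,\beta;1)$ (the literally divergent $G(1,\beta;1)$ being taken in Panzer's regularized sense~\cite{Panzer2015}). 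Each $G(\alpha,\beta;1)$ is a dilogarithm of an algebraic function of $x$ plus products of logarithms; applying the inversion, reflection, Landen, and five-term identities for $\Li_2$ and grouping the six $w\in S_x=S_x^+\cup S_x^-$ contributions according to the sign patterns built into \eqref{eq:L(x)defn}--\eqref{eq:m(x)defn} --- using that $t(1-t^2)$ is constant on $S_x^+$, with value $x(1-x^2)$, and on $S_x^-$, with value $-x(1-x^2)$ --- reproduces $\mathfrak{L}(x)$, $\mathfrak{M}(x)$, $\mathfrak{l}(x)$, $\mathfrak{m}(x)$ and the stated elementary terms. Most of these symbolic manipulations can be carried out with Panzer's \texttt{HyperInt}~\cite{Panzer2015}.

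I expect stage three to be the main obstacle, on two counts. Combinatorially, the raw weight-two output is a large, unstructured sum of dilogarithms at the points $1/w$, at $\tfrac{1-w}{1+w}$, and at various further Möbius images of the six elements of $S_x$, and showing that it reorganizes into the compact antisymmetric $\bigl(S_x^+\ \text{versus}\ S_x^-\bigr)$ packets of \eqref{eq:L(x)defn}--\eqref{eq:m(x)defn} demands a careful, structure-aware application of the $\Li_2$ functional equations rather than a mechanical one. Analytically, as flagged in the footnote to the proposition, one must verify that $\tfrac{1}{\sqrt{3x^2-4}}\mathfrak{M}(x)$, $\tfrac{1}{\sqrt{3x^2-4}}\mathfrak{m}(x)$, $\tanh^{-1}\tfrac1x$, and $\tfrac{1}{\sqrt{3x^2-4}}\log\tfrac{x^2-2+i\sqrt{3x^2-4}}{x^2-2-i\sqrt{3x^2-4}}$ extend to single-valued holomorphic functions on the region $\bigl|x(1-x^2)\bigr|>\tfrac{2}{3\sqrt{3}}$ --- in particular across $3x^2-4=0$ and across the loci where some $\sigma_{\ell,m}(x)$ crosses a branch cut. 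Granting this, each tabulated identity holds for $|x|$ large, where the series converge absolutely and every logarithm and dilogarithm sits on its principal branch, and then propagates to the whole region by analytic continuation.
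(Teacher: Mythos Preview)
Your three-stage plan is correct in outline and matches the paper's strategy, but the paper's execution is considerably more direct and avoids the ``stage three obstacle'' you flag. Instead of passing through the GPL recursion and then collapsing via $\Li_2$ functional equations, the paper precomputes a short list of elementary integrals---essentially
\[
\int_0^1\frac{\log t}{t-w}\,\D t=\Li_2\!\left(\frac1w\right),\qquad
\int_0^1\frac{\log\frac{1-t^2}{t^2}}{t-w}\,\D t=-\Li_2\!\left(\frac{1-w}{1+w}\right)-\log^2\!\left(1+\frac1w\right),
\]
together with a few companions---and then partial-fractions the \emph{rational kernel in $t$} (the factor $\tfrac{[t(1-t^2)/(x(1-x^2))]^2}{1-[t(1-t^2)/(x(1-x^2))]^2}$ times $\tfrac1{1-t^2}$, $\tfrac1{t^2}$, or $1$) into simple poles at the six points of $S_x$. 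The $\mathfrak L,\mathfrak M,\mathfrak l,\mathfrak m$ packets then drop out automatically from the partial-fraction residues, with no after-the-fact reorganization via five-term identities needed.

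One concrete slip in your stage two: you write that the integrand becomes a combination of $\tfrac1{t-\alpha}G(\beta;t)$ with $\alpha\in\{0,1,-1\}$. For the rows where the series sums geometrically---all of (A.1)--(A.3), panel (B), panel (C), and (D.1)--(D.3)---the poles of the summed kernel lie in $S_x$, not in $\{0,1,-1\}$; only for the $a_k=\tfrac1k$ and $a_k=\tfrac1{2k-1}$ rows (A.4) and (D.4) does the outer kernel contribute $\alpha\in\{-1,0,1\}$. Relatedly, the integration by parts you propose for the $\D t/t^2$ and bare-$\D t$ families is unnecessary: the integral representations \eqref{eq:2k-1_int_repn1}--\eqref{eq:2k-1_int_repn3} and \eqref{eq:Itab2a'}--\eqref{eq:Itab2c'} already carry those kernels, and the paper partial-fractions them directly.
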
\begin{table}[t]\caption{(Di)logarithmic evaluations of certain infinite series involving $\binom{6k}{3k} $\label{tab:6kLi2}}\begin{scriptsize}\newcounter{AA}\newcounter{BB}\newcounter{CC}\newcounter{DD}\begin{tabular}{c|L|L}\hline\hline \text{\No}&a_k&\displaystyle \sum_{k=1}^\infty \frac{a_{k}\binom{2k}k }{k\binom{3k}k\binom{6k}{3k}}\left[ \frac{2^{2}}{x\big(1-x^{2}\big)} \right]^{2k}\vphantom{\frac{\frac\int1}{\frac1\int}}\\\hline  {\refstepcounter{AA}\AAlabel{eq:sum6}}\text{(A.\theAA{})}&1&\frac{1}{1-3 x^2}\left[6 x\tanh ^{-1}\frac{1}{x}+\frac{i \left(3 x^2-2\right)}{\sqrt{3 x^2-4} }\log \frac{x^2-2+i \sqrt{3 x^2-4}}{x^2-2-i \sqrt{3 x^2-4}}\right]\vphantom{\frac{\frac\int1}{}}\\[8pt] {\refstepcounter{AA}\AAlabel{eq:sum6'}}\text{(A.\theAA{})}&-\mathsf H_k+2\mathsf  H_{2 k}+\mathsf H_{3 k}-2\mathsf  H_{6 k}&\frac{2}{1-3 x^2}\left[x\mathfrak{L}(x)+\frac{i\left(3x^{2}-2\right)\mathfrak{M}(x)}{\sqrt{3 x^2-4}}\right]  \\[8pt] {\refstepcounter{AA}\AAlabel{eq:sum6''}}\text{(A.\theAA{})}&\mathsf H_k-\mathsf H_{2 k}-\frac{1}{2k}+2 \log2&\frac{1 }{1-3 x^2}\left[x \mathfrak{l}(x)-\frac{i\left(3x^{2}-2\right)\mathfrak{m}(x)}{\sqrt{3 x^2-4}} \right]\\[8pt] {\refstepcounter{AA}\AAlabel{eq:sum6'''}}\text{(A.\theAA{})}&\frac1k&-6 \left(\tanh ^{-1}\frac{1}{x}\right)^2-\frac{1}{2} \log ^2\frac{x^2-2+i \sqrt{3 x^2-4}}{x^2-2-i \sqrt{3 x^2-4}}\\[8pt]\hline\hline\multicolumn{3}{c}{}\\\hline\hline \No&a_k&\displaystyle \sum_{k=0}^\infty \frac{a_{k}\binom{2k}k }{(6k+1)\binom{3k}k\binom{6k}{3k}}\left[ \frac{2^{2}}{x\big(1-x^{2}\big)} \right]^{2k}\vphantom{\frac{\frac\int1}{\frac1\int}}\\\hline{\refstepcounter{BB}\BBlabel{eq:(6k+1)}}\text{(B.\theBB{})}&1&\frac{3x \left(1-x^{2}\right)   }{2 \left(1-3 x^2\right)}\left[\tanh ^{-1}\frac{1}{x}-\frac{i  x  }{2 \sqrt{3 x^2-4} }\log \frac{x^2-2+i \sqrt{3 x^2-4}}{x^2-2-i \sqrt{3 x^2-4}}\right]\vphantom{\frac{\frac\int1}{}}\\[8pt]{\refstepcounter{BB}\BBlabel{eq:(6k+1)'}}\text{(B.\theBB{})}&-\mathsf H_k+2\mathsf  H_{2 k}+\mathsf H_{3 k}-2\mathsf  H_{6 k+1}&\frac{x \left(1-x^2\right)}{2 \left(1-3 x^2\right)}\left[ \mathfrak{L}(x)-\frac{3ix\mathfrak{M}(x)}{\sqrt{3 x^2-4}} \right]\\[8pt]{\refstepcounter{BB}\BBlabel{eq:(6k+1)''}}\text{(B.\theBB{})}&\mathsf H_k-\mathsf H_{2 k}+2\log2&\frac{x \left(1-x^2\right)}{4\left(1-3 x^2\right)}\left[ \mathfrak{l}(x)+\frac{3ix\mathfrak{m}(x)}{\sqrt{3 x^2-4}}\right]\\[8pt]\hline\hline
\multicolumn{3}{c}{}\\\hline\hline \No&a_k&\displaystyle \sum_{k=0}^\infty \frac{a_{k}\binom{2k}k }{(6k+5)\binom{3k}k\binom{6k}{3k}}\left[ \frac{2^{2}}{x\big(1-x^{2}\big)} \right]^{2k}\vphantom{\frac{\frac\int1}{\frac1\int}}\\\hline{\refstepcounter{CC}\CClabel{eq:(6k+5)}}\text{(C.\theCC{})}&1&\frac{3 x\left(1-x^2\right)}{2 \left(3 x^2-1\right)}\left[\big(9 x^4-9 x^2+1\big)\tanh ^{-1}\frac{1}{x}+\frac{ i x \left(9 x^4-15 x^2+5\right) }{2 \sqrt{3 x^2-4} }\log\frac{x^2-2+i \sqrt{3 x^2-4}}{x^2-2-i \sqrt{3 x^2-4}}\right]\vphantom{\frac{\frac\int1}{}}\\[8pt]{\refstepcounter{CC}\CClabel{eq:(6k+5)'}}\text{(C.\theCC{})}&\mathsf H_k-\mathsf H_{2 k}+2\log2&\frac{x\left(1-x^2\right) }{4 \left(3 x^2-1\right)}\left[\big(9 x^4-9 x^2+1\big)\mathfrak{l}(x)-\frac{3ix \left(9 x^4-15 x^2+5\right)\mathfrak{m}(x)}{ \sqrt{3 x^2-4}}\right]\\&&{}-\frac{9x \left(1-x^2\right)}{4}\left[\big(3 x^2-2\big)\tanh ^{-1}\frac{1}{x}+\frac{i x \sqrt{3 x^2-4}}{2} \log\frac{x^2-2+i \sqrt{3 x^2-4}}{x^2-2-i \sqrt{3 x^2-4}}\right]\\[8pt]\hline\hline
\multicolumn{3}{c}{}\\\hline\hline \No&a_k&\displaystyle \sum_{k=1}^\infty \frac{a_k\binom{2k}k }{(2k-1)\binom{3k}k\binom{6k}{3k}}\left[ \frac{2^{2}}{x\big(1-x^{2}\big)} \right]^{2k}\vphantom{\frac{\frac\int1}{\frac1\int}}\\\hline{\refstepcounter{DD}\DDlabel{eq:sum6(2k-1)}}\text{(D.\theDD{})}&1&\frac{1}{2 \left(1-x^2\right)\left(1-3 x^2\right)}\left[\frac{3 x^4-3 x^2+2 }{ x }\tanh ^{-1}\frac{1}{x}+\frac{i x^2 \left(3 x^2-5\right) }{2 \sqrt{3 x^2-4} }\log \frac{x^2-2+i \sqrt{3 x^2-4}}{x^2-2-i \sqrt{3 x^2-4}}\right]\vphantom{\frac{\frac\int1}{}}\\[8pt]{\refstepcounter{DD}\DDlabel{eq:sum6(2k-1)'}}\text{(D.\theDD{})}&-\mathsf H_k+2\mathsf  H_{2 k}+\mathsf H_{3 k}-2\mathsf  H_{6 k}-\frac{2}{2 k-1}&\frac{1 }{x \left(1- x^2\right)}\left[\Li_2\left(-\frac{1}{x}\right)-\Li_2\left(\frac{1}{x}\right)\right]+\frac{x}{2 \left(1-x^2\right) \left(1-3 x^2\right)}\left[\big(1+x^2\big)\mathfrak{L}(x)+\frac{ix \left(3 x^2-5\right)\mathfrak{M}(x)}{ \sqrt{3 x^2-4}}\right]  \\[8pt]{\refstepcounter{DD}\DDlabel{eq:sum6(2k-1)''}}\text{(D.\theDD{})}&\mathsf H_k-\mathsf H_{2 k}+\frac{2}{2 k-1}+2 \log2&\frac{1}{2  x\left(1-x^2\right)}\left[ \Li_2\left(\frac{1-x}{1+x}\right)-\Li_2\left(\frac{1+x}{1-x}\right)+\log ^2\left(1+\frac{1}{x}\right)-\log ^2\left(1-\frac{1}{x}\right)\right]\\{}&&{}+\frac{x}{4 \left(1-x^2\right) \left(1-3 x^2\right)}\left[\big(1+x^2\big)\mathfrak{l}(x)-\frac{ ix \left(3 x^2-5\right)\mathfrak{m}(x)}{ \sqrt{3 x^2-4}}\right]\\[8pt]{\refstepcounter{DD}\DDlabel{eq:sum6(2k-1)'''}}\text{(D.\theDD{})}&\frac{1}{2k-1}&-\frac{3 }{2 x \left(1-x^2\right)}\left[\Li_2\left(-\frac{1}{x}\right)-\Li_2\left(\frac{1}{x}\right)\right]+\frac{\mathfrak{L}(x)}{2 x \left(1-x^2\right)}\\&&{}+\frac{1}{4 x \left(1-x^2\right)}\left[\big(3 x^2-2\big)\tanh ^{-1}\frac{1}{x} +\frac{i x \sqrt{3 x^2-4}}{2} \log\frac{x^2-2+i \sqrt{3 x^2-4}}{x^2-2-i \sqrt{3 x^2-4}}\right]\\[8pt]\hline\hline\end{tabular}\end{scriptsize}\end{table}\begin{proof}With Panzer's \texttt{HyperInt} \cite{Panzer2015}, one can verify the following integral identities:{\allowdisplaybreaks\footnotesize\begin{align}
\int_0^1\left( \frac{1}{t+x} -\frac{1}{t-x}\right)\log t\D t={}&\Li_2\left( -\frac{1}{x} \right)-\Li_2\left( \frac{1}{x} \right),\label{eq:logLi2}\\
\sum_{\ell,m\in\{0,1\}}\int_0^1\frac{(-1)^{\ell }}{t-\sigma_{\ell,m}(x)}\log t\D t={}&\sum_{\ell,m\in\{0,1\}}(-1)^{\ell }\Li_2\left( \frac{1}{\sigma_{\ell,m}(x)} \right)=2\left[ \Li_2\left(-\frac{1}{x}\right)-\Li_2\left(\frac{1}{x}\right) \right]-\mathfrak L(x),\\
\sum_{\ell,m\in\{0,1\}}\int_0^1\frac{(-1)^{m }}{t-\sigma_{\ell,m}(x)}\log t\D t={}&\sum_{\ell,m\in\{0,1\}}(-1)^{m }\Li_2\left( \frac{1}{\sigma_{\ell,m}(x)} \right)=\mathfrak M(x),\\\begin{split}
\int_0^1\left( \frac{1}{t+x} -\frac{1}{t-x}\right)\log \frac{1-t^{2}}{t^{2}}\D t={}&\Li_2\left(\frac{1-x}{1+x}\right)-\Li_2\left(\frac{1+x}{1-x}\right)+\log ^2\left(1+\frac{1}{x}\right)-\log ^2\left(1-\frac{1}{x}\right),
\end{split}
\\
\begin{split}
\sum_{\ell,m\in\{0,1\}}\int_0^1\frac{(-1)^{\ell }}{t-\sigma_{\ell,m}(x)}\log \frac{1-t^{2}}{t^{2}}\D t={}&-\sum_{\ell,m\in\{0,1\}}(-1)^{\ell }\left[ \Li_2\left( \frac{1-\sigma_{\ell,m}(x)}{1+\sigma_{\ell,m}(x)}\right) +\log^{2}\left(1+ \frac{1}{\sigma_{\ell,m}(x)} \right)\right]\\={}&2\left[\Li_2\left(\frac{1-x}{1+x}\right)-\Li_2\left(\frac{1+x}{1-x}\right)+\log ^2\left(1+\frac{1}{x}\right)-\log ^2\left(1-\frac{1}{x}\right)\right]-\mathfrak l(x),
\end{split}\\
\sum_{\ell,m\in\{0,1\}}\int_0^1\frac{(-1)^{m }}{t-\sigma_{\ell,m}(x)}\log \frac{1-t^{2}}{t^{2}}\D t={}&-\sum_{\ell,m\in\{0,1\}}(-1)^{m }\left[ \Li_2\left( \frac{1-\sigma_{\ell,m}(x)}{1+\sigma_{\ell,m}(x)}\right) +\log^{2}\left(1+ \frac{1}{\sigma_{\ell,m}(x)} \right)\right]=-\mathfrak m(x),\label{eq:logLi2'}\\\int_{0}^1\frac{1}{1-t^{2}}\log\frac{1-\frac{t^{2}}{w^{2}}}{1-\frac{1}{w^{2}}}\D t={}&\frac{1}{4} \log ^2\frac{w-1}{w+1},\label{eq:log_log}\\\int_{0}^1\frac{1-t^{2}}{t}\log\frac{1-\frac{t}{w}}{1+\frac{t}{w}}\D t={}&\Li_2\left( -\frac{1}{w} \right)-\Li_2\left( \frac{1}{w} \right)+\frac{w^{2}-1}{2}\log \left(\frac{w-1}{w+1}\right)+w,\label{eq:Li2etc}
\end{align}}which will be instrumental in the derivations below.\begin{enumerate}[leftmargin=*,  label=(\Alph*),ref=(\Alph*),
widest=D, align=left] \item
In view of  \eqref{eq:Itab2a}--\eqref{eq:Itab2c}, the infinite  series in (A.\AAref{eq:sum6})--(A.\AAref{eq:sum6''}) can be represented as \begin{align}
\int_0^1\frac{4\left[\frac{t (1-t^2)}{x (1-x^2)}\right]^2}{1-\left[\frac{t (1-t^2)}{x (1-x^2)}\right]^2}\frac{f(t)\D t}{1-t^{2}},
\end{align}where $f(t)=1$, $2\log t$, and $\log\frac{1-t^2}{t^2} $, respectively. The partial fraction\begin{align}
\begin{split}
\frac{\left[\frac{t (1-t^2)}{x (1-x^2)}\right]^2\frac{1}{1-t^{2}}}{1-\left[\frac{t (1-t^2)}{x (1-x^2)}\right]^2}={}&\frac{x }{2 \left(1-3 x^2\right)}\left(\frac{1}{t+x}-\frac{1}{t-x}\right)\\&{}-\frac{1}{4 \left(1-3 x^2\right)}\sum_{\ell,m\in\{0,1\}}\frac{(-1)^{\ell }x+\frac{(-1)^m i(2-3x^{2})}{ \sqrt{3 x^2-4}}}{t-\sigma_{\ell,m}(x)}
\end{split}
\end{align}  and \eqref{eq:logLi2}--\eqref{eq:logLi2'} together bring us the final form of (A.\AAref{eq:sum6})--(A.\AAref{eq:sum6''}).

To verify (A.\AAref{eq:sum6'''}), we represent the corresponding series by\begin{align}
\begin{split}&
\sum_{k=1}^\infty\frac{4}{k}\int_0^1\left[\frac{t\left(1-t^2\right)}{x\left( 1-x^2 \right)}\right]^{2k}\frac{\D t}{1-t^2}=4\int_{0}^{1}\Li_1\left( \left[\frac{t\left(1-t^2\right)}{x\left( 1-x^2 \right)}\right]^{2} \right)\frac{\D t}{1-t^2}\\={}&-4\sum_{w\in S_{x}^+}\int_{0}^{1}\frac{1}{1-t^{2}}\log\frac{1-\frac{t^{2}}{w^{2}}}{1-\frac{1}{w^{2}}}\D t,
\end{split}
\end{align}and appeal to \eqref{eq:log_log}.\item By  \eqref{eq:Itab2a'}--\eqref{eq:Itab2c'}, we may compute the infinite series in (B.\BBref{eq:(6k+1)})--(B.\BBref{eq:(6k+1)''}) through their integral representations\begin{align}
\int_0^1\frac{f(t)\D t}{1-\left[\frac{t (1-t^2)}{x (1-x^2)}\right]^2},
\end{align}where $f(t)=1$, $2\log t$, and $\log\frac{1-t^2}{t^2} $, respectively. Here, the partial fraction
\begin{align}
\frac{1}{1-\left[\frac{t (1-t^2)}{x (1-x^2)}\right]^2}={}&\frac{x \left(1-x^2\right) }{2 \left(1-3 x^2\right)}\left(\frac{1}{t+x}-\frac{1}{t-x}\right)-\frac{x \left(1-x^2\right) }{4 \left(1-3 x^2\right)}\sum_{\ell,m\in\{0,1\}}\frac{(-1)^{\ell }+\frac{3(-1)^m ix}{ \sqrt{3 x^2-4}}}{t-\sigma_{\ell,m}(x)}
\end{align}is helpful.\item Akin to \cite[(3.20)--(3.22)]{SunZhou2024sum3k4k}, we have{\allowdisplaybreaks\begin{align}
\begin{split}&
\left( \frac{1}{6k+1} +\frac{1}{6k+5}\right)\frac{\binom{2k}{k}}{\binom{3k}{k}\binom{6k}{3k}}\\={}&\frac{9}{2^{4k}}\int_0^1\frac{\left[t\left(1-t^2\right)\right]^{2k+2}}{1-t^2}\D t,
\end{split}\\\begin{split}&
\left( \frac{1}{6k+1} +\frac{1}{6k+5}\right)\frac{(-\mathsf H_{k}+2 \mathsf H_{2 k+1}+\mathsf H_{3 k+3}-2\mathsf H_{6 k+6})\binom{2k}{k}}{\binom{3k}{k}\binom{6k}{3k}}\\={}&\frac{18}{2^{4k}}\int_0^1\frac{\left[t\left(1-t^2\right)\right]^{2k+2}\log t}{1-t^2}\D t,
\end{split}\\\begin{split}&\left( \frac{1}{6k+1} +\frac{1}{6k+5}\right)\frac{(\mathsf H_{2 k+1}+\mathsf H_{3 k+3}-2\mathsf H_{6 k+6}+2 \log 2)\binom{2k}{k}}{\binom{3k}{k}\binom{6k}{3k}}\\
={}&\frac{9}{2^{4k}}\int_0^1\frac{\left[t\left(1-t^2\right)\right]^{2k+2}\log \left(1-t^2\right)}{1-t^2}\D t.
\end{split}
\end{align}}     Like \cite[(3.24)]{SunZhou2024sum3k4k}, we have\begin{align}
\begin{split}&
\sum_{k=0}^\infty\left( \frac{1}{6k+1} +\frac{1}{6k+5}\right)\frac{( \mathsf H_{2 k+1}-\mathsf H_{2k})\binom{2k}{k}}{\binom{3k}{k}\binom{6k}{3k}}\left[ \frac{2^{2}}{x\big(1-x^{2}\big)} \right]^{2k}\\={}&\frac{3}{2}\sum_{k=0}^\infty\left( \frac{1}{6k+1} -\frac{1}{6k+5}\right)\frac{\binom{2k}{k}}{\binom{3k}{k}\binom{6k}{3k}}\left[ \frac{2^{2}}{x\big(1-x^{2}\big)} \right]^{2k}\\={}&3\int_0^1\frac{\D t}{1-\left[\frac{t (1-t^2)}{x (1-x^2)}\right]^2}-\frac{27}{2}\int_0^1\frac{t^{2}\left(1-t^2\right)\D t}{1-\left[\frac{t (1-t^2)}{x (1-x^2)}\right]^2}.
\end{split}
\end{align}Therefore, both (C.\CCref{eq:(6k+5)}) and (C.\CCref{eq:(6k+5)'}) are analogs of the cases treated in panel (B).\item The series identities in   (D.\DDref{eq:sum6(2k-1)})--(D.\DDref{eq:sum6(2k-1)''}) build on \eqref{eq:2k-1_int_repn1}--\eqref{eq:2k-1_int_repn3}. We omit the routine computations involving partial fractions and invocations of \eqref{eq:logLi2}--\eqref{eq:logLi2'}.

To evaluate the infinite series in (D.\DDref{eq:sum6(2k-1)'''}), we compare its integral representation\begin{align}
\begin{split}&
\sum_{k=1}^\infty\frac{1}{2k-1}\int_0^1\left[\frac{t\left(1-t^2\right)}{x\left( 1-x^2 \right)}\right]^{2k}\frac{\D t}{t^2}=-\frac{1}{2x\left( 1-x^2 \right)}\int_0^1\frac{1-t^{2}}{t}\log\frac{1-\frac{t (1-t^2)}{x (1-x^2)}}{1+\frac{t (1-t^2)}{x (1-x^2)}}\D t\\={}&-\frac{1}{2x\left( 1-x^2 \right)}\sum_{w\in S_x^+}\int_0^1\frac{1-t^{2}}{t}\log\frac{1- \frac{t}{w}}{1+\frac{t}{w}}\D t
\end{split}
\end{align}to \eqref{eq:Li2etc}.\qedhere\end{enumerate}

\end{proof}
\begin{corollary}[cf.\ {\cite[Conjecture 5.5(ii)]{Sun2023}}]\label{cor:Z2(8)}
We have {\allowdisplaybreaks\begin{align}\begin{split}&
\sum_{k=1}^\infty\frac{\binom{2k}k2^{3k}}{(2k-1)\binom{3k}k\binom{6k}{3k}}\\={}&\frac{2 \sqrt{2}\widetilde \lambda}{5}-\frac{\pi }{10 \sqrt{2}},
\end{split}\label{eq:T60spec1}\\\begin{split}&
\sum_{k=1}^\infty\frac{\left[ -\mathsf H_k+2\mathsf  H_{2 k}+\mathsf H_{3 k}-2\mathsf  H_{6 k}-\frac{2}{2 k-1} \right]\binom{2k}k2^{3k}}{(2k-1)\binom{3k}k\binom{6k}{3k}}\\={}&\frac{2\sqrt{2}\left[ \RE\Li_{1,1}(i,\theta)+\I\Li_2(\theta) -\frac{G}{4}\right]}{5}-\frac{3 \lambda ^2}{40 \sqrt{2}}+\frac{\lambda  \widetilde{\lambda }}{10 \sqrt{2}}-\frac{\widetilde{\lambda }^2}{10 \sqrt{2}}-\frac{83 \pi ^2}{480 \sqrt{2}},
\end{split}\label{eq:T60spec2}\\\begin{split}&
\sum_{k=1}^\infty\frac{\left[\mathsf H_k-\mathsf H_{2 k}+\frac{2}{2 k-1}+2 \log2 \right]\binom{2k}k2^{3k}}{(2k-1)\binom{3k}k\binom{6k}{3k}}\\={}&-\frac{\sqrt{2}\left[ 15\RE\Li_{1,1}(i,\theta)+\I\Li_2(\theta) -\frac{G}{4}\right]}{5}+\frac{9 \lambda ^2}{16 \sqrt{2}}+\frac{17 \lambda  \widetilde{\lambda }}{20 \sqrt{2}}+\frac{3 \widetilde{\lambda }^2}{4 \sqrt{2}}-\frac{\pi  \lambda }{20 \sqrt{2}}-\frac{\pi ^2}{320 \sqrt{2}},
\end{split}\label{eq:T60spec3}\\\begin{split}&
\sum_{k=1}^\infty\frac{\binom{2k}k2^{3k}}{(2k-1)^{2}\binom{3k}k\binom{6k}{3k}}\\={}&\sqrt{2}\RE\Li_{1,1}(i,\theta)-\frac{3 \lambda ^2}{16 \sqrt{2}}+\frac{\lambda  \widetilde{\lambda }}{4 \sqrt{2}}-\frac{\widetilde{\lambda }}{\sqrt{2}}-\frac{\widetilde{\lambda }^2}{4 \sqrt{2}}+\frac{\pi }{4 \sqrt{2}}+\frac{13 \pi ^2}{192 \sqrt{2}},
\end{split}\label{eq:T60spec4}\end{align}}where $ \theta\colonequals e^{\pi i/4}$, $\lambda\colonequals\log2 $, $\widetilde \lambda\colonequals\log\big(1+\sqrt{2}\big)$, and $ G\colonequals\I\Li_2(i)$. Consequently, we have \eqref{eq:conj5.5ii1}--\eqref{eq:conj5.5ii3}.\end{corollary}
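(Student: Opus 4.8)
The plan is to obtain \eqref{eq:T60spec1}--\eqref{eq:T60spec4} as the $x=\sqrt2$ specializations of entries (D.\DDref{eq:sum6(2k-1)})--(D.\DDref{eq:sum6(2k-1)'''}) of Table~\ref{tab:6kLi2}, and then to extract \eqref{eq:conj5.5ii1}--\eqref{eq:conj5.5ii3} by elementary partial-fraction manipulations in $k$. The substitution is forced: $\big[2^2/(x(1-x^2))\big]^{2k}=8^k$ exactly when $\big(x(1-x^2)\big)^2=2$, and the cubic $x^3-x-\sqrt2=0$ has the root $x=\sqrt2$ (since $2\sqrt2-\sqrt2-\sqrt2=0$), for which $x(1-x^2)=-\sqrt2$ and $\big|x(1-x^2)\big|=\sqrt2>\tfrac{2}{3\sqrt3}$, so the Proposition applies. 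First I would substitute $x=\sqrt2$ directly: then $3x^2-4=2$, $1-x^2=-1$, $1-3x^2=-5$, $x^2-2=0$, $\tanh^{-1}\tfrac{1}{x}=\tfrac{1}{2}\log\tfrac{\sqrt2+1}{\sqrt2-1}=\log(1+\sqrt2)=\widetilde\lambda$, and $\log\frac{x^2-2+i\sqrt{3x^2-4}}{x^2-2-i\sqrt{3x^2-4}}=\log(-1)=i\pi$; for instance entry (D.\DDref{eq:sum6(2k-1)}) collapses to $\tfrac{1}{10}\big(\tfrac{8}{\sqrt2}\widetilde\lambda-\tfrac{\pi}{\sqrt2}\big)=\tfrac{2\sqrt2\,\widetilde\lambda}{5}-\tfrac{\pi}{10\sqrt2}$, which is \eqref{eq:T60spec1}.

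The substantive step is the evaluation at $x=\sqrt2$ of the dilogarithmic parts of entries (D.\DDref{eq:sum6(2k-1)'})--(D.\DDref{eq:sum6(2k-1)'''}): these are assembled from $\Li_2\big(\pm\tfrac{1}{\sqrt2}\big)$, $\Li_2\big(\tfrac{1\mp\sqrt2}{1\pm\sqrt2}\big)$ and the four special values $\mathfrak L(\sqrt2)$, $\mathfrak M(\sqrt2)$, $\mathfrak l(\sqrt2)$, $\mathfrak m(\sqrt2)$, which are $\mathbb Q$-combinations of dilogarithms and squared logarithms at arguments built rationally from the members of the sextic set $S_{\sqrt2}$, namely $\pm\sqrt2$ together with the four primitive eighth roots of unity. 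As the Table~\ref{tab:6kLi2} entries are weight-$\le2$ expressions in GPLs over an alphabet that Au's \texttt{IterIntDoableQ} test certifies to be of level~$8$ (just as in the proof of \eqref{eq:S6Z(8)}), each such specialization lies in $\mathfrak Z_2(8)$; I would therefore run the underlying GPL expressions through Au's \texttt{MultipleZetaValues} package to rewrite each as a rational combination of $\RE\Li_{1,1}(i,\theta)$, $\I\Li_2(\theta)$, $G=\I\Li_2(i)$, $\pi^2$, $\lambda^2$, $\lambda\widetilde\lambda$, $\widetilde\lambda^2$, $\pi\lambda$, $\pi$ and $1$, and comparison with the elementary pieces above yields \eqref{eq:T60spec2}--\eqref{eq:T60spec4}. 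Running panel~(A) of Table~\ref{tab:6kLi2} through the same reduction at $x=\sqrt2$ furnishes the companion evaluations of $\sum_{k\ge1}\frac{a_k\binom{2k}{k}8^k}{k\binom{3k}{k}\binom{6k}{3k}}$ for $a_k\in\big\{1,\ \mathsf H_k-\mathsf H_{2k}-\tfrac{1}{2k}+2\log2,\ -\mathsf H_k+2\mathsf H_{2k}+\mathsf H_{3k}-2\mathsf H_{6k}\big\}$ (the first being the Table~\ref{tab:Zk(8)6k} value $\mathsf S_{6,-1}(\tfrac{1}{k};2^3)=-\tfrac{6\sqrt2\,\widetilde\lambda}{5}+\tfrac{2\sqrt2\,\pi}{5}$), which will be needed below.

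To deduce \eqref{eq:conj5.5ii1}--\eqref{eq:conj5.5ii3} I would argue purely combinatorially. Observe first that $12\mathsf H_{6k-1}-6\mathsf H_{3k-1}-4\mathsf H_{2k-1}-2\mathsf H_{k-1}=6\big(2\mathsf H_{6k-1}-\mathsf H_{3k-1}-\mathsf H_{k-1}\big)-4\big(\mathsf H_{2k-1}-\mathsf H_{k-1}\big)$, while $6\big(\!-\tfrac{2(3k-1)}{2k-1}\big)-4\big(\!-\tfrac{3(6k-1)}{4(2k-1)}\big)=-9$; hence \eqref{eq:conj5.5ii3} equals $6$ times \eqref{eq:conj5.5ii2} minus $4$ times \eqref{eq:conj5.5ii1} and needs no separate verification. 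For \eqref{eq:conj5.5ii1} and \eqref{eq:conj5.5ii2} I would split $\tfrac{5k-1}{k(2k-1)}=\tfrac{1}{k}+\tfrac{3}{2k-1}$ and use the elementary identities $\mathsf H_{2k-1}-\mathsf H_{k-1}=\mathsf H_{2k}-\mathsf H_k+\tfrac{1}{2k}$ and $2\mathsf H_{6k-1}-\mathsf H_{3k-1}-\mathsf H_{k-1}=2(\mathsf H_{2k}-\mathsf H_k)-\big(\!-\mathsf H_k+2\mathsf H_{2k}+\mathsf H_{3k}-2\mathsf H_{6k}\big)+\tfrac{1}{k}$ to recast the harmonic combinations through the $a_k$'s of entries (A.\AAref{eq:sum6'}), (A.\AAref{eq:sum6''}), (D.\DDref{eq:sum6(2k-1)'}), (D.\DDref{eq:sum6(2k-1)''}), absorbing the residual $\tfrac{1}{2k}$-shifts and partial-fractioning the constant terms $\tfrac{3(6k-1)}{4(2k-1)}$ and $\tfrac{2(3k-1)}{2k-1}$ into multiples of $\tfrac{1}{k}$, $\tfrac{1}{2k-1}$, $\tfrac{1}{(2k-1)^2}$. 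Summing the result against $\frac{\binom{2k}{k}8^k}{\binom{3k}{k}\binom{6k}{3k}}$ then expresses each of \eqref{eq:conj5.5ii1}, \eqref{eq:conj5.5ii2} as an explicit $\mathbb Q$-linear combination of the panel-(A) and panel-(D) specializations above (in particular of $\sum_{k\ge1}\frac{\binom{2k}{k}8^k}{k\binom{3k}{k}\binom{6k}{3k}}$, of \eqref{eq:T60spec1}, and of \eqref{eq:T60spec4}); inserting $L=\sqrt2\big[\I\Li_2(\theta)-\tfrac{G}{4}\big]$ then collapses the two sides to $\tfrac{3\sqrt2\,\pi\log2}{8}-L$ and $\tfrac{3\sqrt2\,\pi\log2}{4}$ respectively, whence also \eqref{eq:conj5.5ii3}.

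The hard part will be the weight-$2$ CMZV reduction in the middle step: one must confirm that the dilogarithms supported on the orbit $S_{\sqrt2}$ — whose members carry both $\sqrt2$ and $i\sqrt2$ — do not escape level~$8$ but collapse onto the four named generators $\RE\Li_{1,1}(i,\theta)$, $\I\Li_2(\theta)$, $G$, $\pi^2$ (alongside the real logarithmic monomials $\lambda^2$, $\lambda\widetilde\lambda$, $\widetilde\lambda^2$). This is precisely what the \texttt{IterIntDoableQ} test underlying \eqref{eq:S6Z(8)} certifies, so the reduction is in principle mechanical, yet everything downstream rests on it. What remains is purely clerical: the many $\tfrac{1}{2k}$-, $\tfrac{1}{3k}$-, $\tfrac{1}{6k}$-shifts and constant-term partial fractions must cancel exactly, so that the right-hand sides come out as the clean constants asserted in \eqref{eq:conj5.5ii1}--\eqref{eq:conj5.5ii3}.
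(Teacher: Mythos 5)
Your proposal follows essentially the same route as the paper: specialize panel (D) of Table \ref{tab:6kLi2} at $x=\sqrt2$ (where $\big[2^2/(x(1-x^2))\big]^{2k}=8^k$), reduce the resulting values $\mathfrak L(\sqrt2)$, $\mathfrak M(\sqrt2)$, $\mathfrak l(\sqrt2)$, $\mathfrak m(\sqrt2)$ and the accompanying dilogarithms to the level-$8$ basis $\RE\Li_{1,1}(i,\theta)$, $\I\Li_2(\theta)$, $G$, $\pi^2$, $\lambda$, $\widetilde\lambda$ via Au's package, and then combine with the panel-(A) specializations by partial fractions in $k$ to extract \eqref{eq:conj5.5ii1}--\eqref{eq:conj5.5ii3}. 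Your explicit bookkeeping (the splitting $\tfrac{5k-1}{k(2k-1)}=\tfrac1k+\tfrac3{2k-1}$, the shift identities for the harmonic numbers, and the observation that \eqref{eq:conj5.5ii3} is $6\times$\eqref{eq:conj5.5ii2}$-4\times$\eqref{eq:conj5.5ii1}) correctly fills in steps the paper leaves implicit.
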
\begin{proof}Using Au's \texttt{MultipleZetaValues} package \cite{Au2022a}, one can check that  {\allowdisplaybreaks\begin{align}
\begin{split}
&\Li_2\left( -\frac{1}{\sqrt{2}} \right)-\Li_2\left( \frac{1}{\sqrt{2}} \right)\\={}&4\RE\Li_{1,1}(i,\theta)-\frac{3 \lambda ^2}{8}+\frac{\lambda  \widetilde{\lambda }}{2}-\frac{\widetilde{\lambda }^2}{2}-\frac{11 \pi ^2}{96}\in\mathfrak Z_2(8),
\end{split}\\\begin{split}
&\Li_2\left(\frac{1-\sqrt{2}}{1+\sqrt{2}}\right)-\Li_2\left(\frac{1+\sqrt{2}}{1-\sqrt{2}}\right)+\log ^2\left(1+\frac{1}{\sqrt{2}}\right)-\log ^2\left(1-\frac{1}{\sqrt{2}}\right)\\={}&-24\RE\Li_{1,1}(i,\theta)+\frac{9 \lambda ^2}{4}+\lambda  \widetilde{\lambda }+3 \widetilde{\lambda }^2-\frac{5 \pi ^2}{16}\in\mathfrak Z_2(8),
\end{split}
\end{align}and \begin{align}
\mathfrak{L}\big(\sqrt{2}\big)={}&8\RE\Li_{1,1}(i,\theta)-\frac{3 \lambda ^2}{4}+\lambda  \widetilde{\lambda }-\widetilde{\lambda }^2-\frac{23 \pi ^2}{48}\in\mathfrak Z_2(8),\\\mathfrak{M}\big(\sqrt{2}\big)={}&-4i\I\Li_2(\theta)+iG\in\mathfrak Z_2(8),\\
\mathfrak{l}\big(\sqrt{2}\big)={}&-60\RE\Li_{1,1}(i,\theta)+\frac{45 \lambda ^2}{8}+\frac{9 \lambda  \widetilde{\lambda }}{2}+\frac{15 \widetilde{\lambda }^2}{2}-\frac{17 \pi ^2}{32}\in\mathfrak Z_2(8),\\\mathfrak{m}\big(\sqrt{2}\big)={}&-4i\I\Li_2(\theta)+i G-\frac{i \pi  \lambda }{2}\in\mathfrak Z_2(8).
\end{align}}Here, we have $\theta\colonequals e^{\pi i/4} $, $\lambda\colonequals \log2 $,  $\widetilde \lambda \colonequals \log\big(1+\sqrt2\big)$, and $G\colonequals \I\Li_2(i) $.

Drawing on the information from the last paragraph and panel (D) of Table \ref{tab:6kLi2}, we can verify \eqref{eq:T60spec1}--\eqref{eq:T60spec4}.

One can produce analogs of \eqref{eq:T60spec1}--\eqref{eq:T60spec4} where $k$ takes the place of  $2k-1 $ in the denominators of the summands, by either exploiting panel (A) of Table \ref{tab:6kLi2} or referring back to Table \ref{tab:Zk(8)6k}.

 With the aforementioned data in hand, one can verify \eqref{eq:conj5.5ii1}--\eqref{eq:conj5.5ii3}.
\end{proof}
\subsection{Trilogarithmic evaluations}
In addition to proving \cite[Conjecture 5.5(iii)]{Sun2023} in this subsection, we will also encounter various infinite series resembling \eqref{eq:conj5.5iii}, which can all be expressed in terms of $ \mathfrak Z_r(8),r\in\{1,2,3\}$.

In the next  proposition, we extend the evaluations in Table \ref{tab:6kLi2} a little further, relying on the integral representations in \eqref{eq:Itab2a}--\eqref{eq:Itab2c'}, as well as the following generating functions for $ r\in\{1,2\}$ and $ |z|<1$ \cite[\S5.1]{Mezo2014}:\begin{align}
\sum_{k=1}^\infty\mathsf H_k^{(r)}z^k={}&\frac{\Li_r(z)}{1-z},&\sum_{k=1}^\infty\mathsf H_{2k-1}^{(r)}z^{2k}=\frac{z}{2}{}&\left[\frac{\Li_r(z)}{1-z}-\frac{\Li_r(-z)}{1+z}\right].\label{eq:MezoG}
\end{align}

\begin{proposition}For   $ \big|x\big(1-x^{2}\big)\big|>\frac{2}{3\sqrt{3}}$, we have the  evaluations in Table \ref{tab:6kLi3} involving the functions $ \mathfrak l(x)$ and $\mathfrak m(x)
 $ [defined in \eqref{eq:l(x)defn} and \eqref{eq:m(x)defn}], along with  the following identities  \cite{Panzer2015}\begin{align}
\begin{split}
A_{3}(w)\colonequals {}&\int_0^1\frac{\log^2\frac{t^2}{1-t^2}}{t-w}\D t\\={}&2\left[G(w,-1,-1;1)+G(w,-1,1;1)+G(w,1,-1;1) -3G(w,1,1;1)\right]\\{}&+4\left[G(0,w,-1;1)-G(0,w,1;1)\right]+\frac{4}{3}\log^3\left( 1-\frac{1}{w} \right)+\frac{4\pi^{2}}{3}\log\left( 1-\frac{1}{w} \right),\label{eq:A3defn}
\end{split}\\\begin{split}\mathscr A_3(a,b)\colonequals {}&-\int_0^1\frac{\log\left( 1-\frac{t}{a} \right)}{t-b}\log\frac{t^2}{1-t^2}\D t
\\={}&2G(0,b,a;1)+\sum_{\tau\in\{-1,1\}}\left[G(b,a,\tau;1)+G(b,\tau,a;1)\right].
\end{split}\label{eq:AA3_defn}
\end{align}
Throughout Table \ref{tab:6kLi3},  it is understood that Panzer's logarithmic regularization \cite[\S2.3]{Panzer2015} replaces each divergent  GPL $G(1,a,b;1) $ by a convergent expression $ G(0,1-b,1-a;1)+G(1-b,0,1-a;1)$.\footnote{Note that  the  following {\texttt{Maple}} code \begin{quote}{\texttt{fibrationBasis(eval(regHlog(Hlog(1, [1, a, b])), Hlog(1, [1]) = 0) - (Hlog(1, [0, 1 - b, 1 - a]) + Hlog(1, [1 - b, 0, 1 - a])), [a, b]);}}\end{quote}returns a vanishing output.}\end{proposition}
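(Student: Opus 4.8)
The plan is to run, at one higher transcendental weight, the pipeline already employed for Table~\ref{tab:6kLi2}. First, each series listed in Table~\ref{tab:6kLi3} is converted into a definite integral over $(0,1)$ by the representations of \S\ref{subsec:6k_int_repn}: the entries whose summand carries a single logarithmic combination of harmonic numbers come from \eqref{eq:Itab2a}--\eqref{eq:Itab2c}, \eqref{eq:2k-1_int_repn1}--\eqref{eq:2k-1_int_repn3}, and \eqref{eq:Itab2a'}--\eqref{eq:Itab2c'}, whereas the entries whose summand involves $\mathsf H^{(2)}_{\bullet}$ or a squared first-order harmonic number come from \eqref{eq:log_sqr_1-tt} and \eqref{eq:log_sqr_tt}. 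After interchanging $\sum_{k}$ and $\int_0^1$, the geometric-type factor $\bigl[\tfrac{t(1-t^2)}{x(1-x^2)}\bigr]^{2k}$ is summed against $\tfrac1k$, $\tfrac1{2k-1}$, $\tfrac1{6k+1}$, or --- when an extra factor $\mathsf H^{(r)}_k$ or $\mathsf H^{(r)}_{2k-1}$ is present --- against the Mez\H{o} generating kernels \eqref{eq:MezoG}; in each case the sum is elementary, and the integrand becomes a product of a rational function of $t$ with coefficients in $\mathbb Q(x)$, a polylogarithm $\Li_r$ ($r\le 2$) evaluated at $\pm\bigl[\tfrac{t(1-t^2)}{x(1-x^2)}\bigr]^{m}$ with $m\in\{1,2\}$ (absent in some entries), and at most two elementary logarithms drawn from $\log t$, $\log(1-t^2)$, and $\log\tfrac{t^2}{1-t^2}$, so that the overall weight matches that of the tabulated closed forms.

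Second, I would reduce each such integrand to generalized polylogarithms in $t$, exactly as in the proofs of Lemma~\ref{lm:LiGPL} and Theorem~\ref{thm:6kGPL}. The rational prefactor is put into partial fractions supported at the points $t=w$, $w\in S_x$ (or $S^{+}_x$, $S^{-}_x$), using that the six roots in $t$ of $[t(1-t^2)]^2=[x(1-x^2)]^2$ are precisely the members of $S_x$; the polylogarithms $\Li_r(\pm\,\cdot\,)$ are expanded via Lemma~\ref{lm:LiGPL}; the factors $\Li_1(z)=-\log(1-z)$ produced by the $\tfrac1{1-z}$ pieces of \eqref{eq:MezoG}, as well as $\log(1-t^2)=\log(1-t)+\log(1+t)$ and $\log\tfrac{t^2}{1-t^2}=2\log t-\log(1-t)-\log(1+t)$, are broken into sums of $\log(1-t/w)$ in the manner of the footnote to Lemma~\ref{lm:LiGPL}; and products of logarithms are linearised through the shuffle algebra. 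What remains are the weight-$3$ integrals $\int_0^1\tfrac{\log^2\frac{t^2}{1-t^2}}{t-w}\,\D t=A_3(w)$ and $\int_0^1\tfrac{\log(1-t/a)}{t-b}\log\tfrac{t^2}{1-t^2}\,\D t=-\mathscr A_3(a,b)$ of \eqref{eq:A3defn}--\eqref{eq:AA3_defn}, together with the weight-$\le2$ quadratures \eqref{eq:logLi2}--\eqref{eq:Li2etc} already used for Table~\ref{tab:6kLi2}. The GPL expansions of $A_3$ and $\mathscr A_3$ asserted in the statement are themselves obtained by writing $\log\tfrac{t^2}{1-t^2}$, its square, and $\log(1-t/a)$ as GPLs in $t$ and integrating via \eqref{eq:GPL_rec}; both these identities and the replacement of a divergent $G(1,a,b;1)$ by $G(0,1-b,1-a;1)+G(1-b,0,1-a;1)$ are verified symbolically with Panzer's \texttt{HyperInt} \cite{Panzer2015}.

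The hardest part, I expect, is not any individual quadrature but the combinatorial and analytic bookkeeping: keeping track of which logarithm products actually occur for each of the many choices of $a_k$, and, above all, treating the endpoint behaviour at $t=0$ and $t=1$ correctly. Separate partial-fraction pieces such as $\int_0^1\tfrac1{t^2}\log^2\tfrac{t^2}{1-t^2}\,\D t$ or $\int_0^1\tfrac1{t-1}\log^2\tfrac{t^2}{1-t^2}\,\D t$ are individually divergent, so only the prescribed combinations --- equivalently, the finite parts fixed by Panzer's logarithmic regularisation --- carry meaning, and one must confirm that the $\tfrac1{t^2}$- and $\tfrac1{1-t^2}$-type contributions recombine into the absolutely convergent series one started with. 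This is also where the hypothesis $\bigl|x(1-x^2)\bigr|>\tfrac2{3\sqrt3}$ enters: it guarantees $w\notin(-1,1)$ for every $w\in S_x$, so each $G(\dots,w;t)$ is free of branch-cut discontinuities on $(0,1)$ and the resulting expressions continue to single-valued holomorphic functions of $x$ on the stated open set --- which is what licenses the logarithm-decomposition convention throughout. Once this is arranged, matching partial-fraction coefficients yields every entry of Table~\ref{tab:6kLi3}; specialising these entries at $x=\sqrt2$ and reducing the resulting level-$8$ generalized polylogarithms with Au's \texttt{MultipleZetaValues} package \cite{Au2022a} will then deliver \eqref{eq:conj5.5iii} and its companions in $\mathfrak Z_r(8)$.
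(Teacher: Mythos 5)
Your proposal follows essentially the same route as the paper: integral representations from \S\ref{subsec:6k_int_repn} combined with the Mez\H{o} generating functions \eqref{eq:MezoG}, partial fractions supported on $S_x$, the GPL expansions of Lemma \ref{lm:LiGPL} plus shuffle-algebra linearisation, the \texttt{HyperInt}-verified quadratures culminating in $A_3$ and $\mathscr A_3$, and logarithmic regularisation to handle the individually divergent endpoint pieces (the paper does this via cancellations such as \eqref{eq:vanishingS}). The only organisational detail you leave implicit --- that the $\mathsf H^{(2)}_{k-1}-5\mathsf H^{(2)}_{2k-1}$ entries are obtained by subtracting from the $\log^2\frac{t^2}{1-t^2}$ integral both a cross term and a residual piece assembled from earlier table entries --- is subsumed by your shuffle/bookkeeping step, so the plan is sound as stated.
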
\begin{table}[hp]\caption{Evaluations of certain infinite series involving $\binom{6k}{3k} $ by GPLs of weights up to   $3$ \label{tab:6kLi3}}\begin{tiny}\newcounter{AAA}\newcounter{BBB}\begin{tabular}{c|L|L}\hline\hline \text{\No}&a_k&\displaystyle \sum_{k=1}^\infty \frac{a_{k}\binom{2k}k }{k\binom{3k}k\binom{6k}{3k}}\left[ \frac{2^{2}}{x\big(1-x^{2}\big)} \right]^{2k}\vphantom{\frac{\frac\int1}{\frac1\int}}\\\hline  {\refstepcounter{AAA}\AAAlabel{eq:Li3sum6k1}}\text{(A.\theAAA{})}&\frac{1}{k^{2}}&\displaystyle4\sum_{\tau\in\{-1,1\}}\sum _{\alpha\in\{-1,0,1\}}\sum _{w\in S_{x}}G(\tau,\alpha ,w;1)\tau\vphantom{\frac{1}{}}\\[15pt] {\refstepcounter{AAA}\AAAlabel{eq:Li3sum6k2}}\text{(A.\theAAA{})}&\frac{-\mathsf H_k+2\mathsf  H_{2 k}+\mathsf H_{3 k}-2\mathsf  H_{6 k}}{k}&\displaystyle-4\sum_{\tau\in\{-1,1\}}\sum_{w\in S_{x}}G(0,\tau,w;1)\tau  \\[15pt] {\refstepcounter{AAA}\AAAlabel{eq:Li3sum6k3}}\text{(A.\theAAA{})}&\frac{\mathsf H_{2 k-1}+\mathsf H_{3 k}-2\mathsf H_{6 k}+ \log2}{k}&\displaystyle-2\sum_{\tau,\tau'\in\{-1,1\}}\sum_{w\in S_{x}}G(\tau',\tau,w;1)\tau\\[15pt] {\refstepcounter{AAA}\AAAlabel{eq:Li3sum6k4}}\text{(A.\theAAA{})}&\frac{\mathsf H_{k-1}}{k}&\displaystyle -2\sum_{\tau\in\{-1,1\}}\sum_{w,w'\in S_x}G(\tau ,w,w';1)\tau\\[15pt] {\refstepcounter{AAA}\AAAlabel{eq:Li3sum6k5}}\text{(A.\theAAA{})}&\mathsf H_{k-1}^{(2)}&\displaystyle \tfrac{2}{1-3x^{2}}\sum_{\ell,m\in\{0,1\}}\sum _{\alpha\in\{-1,0,1\}}\sum _{w\in S_{x}}\left[(-1)^{\ell}x-\tfrac{(-1)^{m}i\big( 3x^2 -2\big)}{\sqrt{3x^{2}-4}} \right]G(\sigma_{\ell,m}(x),\alpha ,w;1)\\[12pt]&&\displaystyle{}+\tfrac{4x}{1-3x^{2}}\sum _{\alpha\in\{-1,0,1\}}\sum _{w\in S_{x}}[G(x,\alpha ,w;1)-G(-x,\alpha ,w;1)]\\[15pt] {\refstepcounter{AAA}\AAAlabel{eq:Li3sum6k6}}\text{(A.\theAAA{})}&\mathsf H_{2k-1}^{(2)}&\displaystyle\tfrac{1}{1-3x^{2}}\sum_{m\in\{0,1\}}\sum _{\alpha\in\{-1,0,1\}}\sum _{w\in S^-_{x}}\left[x-\tfrac{(-1)^{m}i\big( 3x^2 -2\big)}{\sqrt{3x^{2}-4}} \right]G(\sigma_{0,m}(x),\alpha,w;1)\\[12pt]&&\displaystyle{}+\tfrac{1}{1-3x^{2}}\sum_{m\in\{0,1\}}\sum _{\alpha\in\{-1,0,1\}}\sum _{w\in S^+_{x}}\left[-x-\tfrac{(-1)^{m}i\big( 3x^2 -2\big)}{\sqrt{3x^{2}-4}} \right]G(\sigma_{1,m}(x),\alpha,w;1)\\[12pt]&&\displaystyle{}-\tfrac{2 x}{1-3x^2 }\sum _{\alpha\in\{-1,0,1\}}\left[\sum _{w\in S^-_{x}}G(-x,\alpha,w;1)-\sum _{w\in S^+_{x}}G(x,\alpha,w;1)\right]\\[15pt]\hline\hline\multicolumn{3}{c}{}\\\hline\hline \text{\No}&a_k&\displaystyle \sum_{k=1}^\infty \frac{ a_{k}\binom{2k}k}{(2k-1)\binom{3k}k\binom{6k}{3k}}\left[ \frac{2^{2}}{x\big(1-x^{2}\big)} \right]^{2k}\vphantom{\frac{\frac\int1}{\frac1\int}}\\\hline  {\refstepcounter{BBB}\BBBlabel{eq:Li3sum6k1'}}\text{(B.\theBBB{})}&\frac{1}{(2k-1)^2}&\displaystyle -\tfrac{1}{2x\left( 1-x^2 \right)}\left(\sum _{w\in S^+_{x}}-\sum _{w\in S^-_{x}} \right)\left[\sum _{\alpha\in\{-1,0,1\}}G(0,\alpha ,w;1)- \tfrac{G(0,w;1)}{2}-\tfrac{3w^{2}}{4}\log\left( 1-\tfrac{1}{w} \right)\right]\vphantom{\frac{\frac\int\int}{}}
\\[15pt]{\refstepcounter{BBB}\BBBlabel{eq:Li3sum6k2'}}\text{(B.\theBBB{})}&\frac{-\mathsf H_k+2\mathsf  H_{2 k}+\mathsf H_{3 k}-2\mathsf  H_{6 k}-\frac{2}{2 k-1}}{2k-1}&\displaystyle -\tfrac{1}{x \left(1-x^2\right)}\left(\sum _{w\in S^+_{x}}-\sum _{w\in S^-_{x}} \right)\left[G(w,1,1;1)+G(0,w,1;1)+\tfrac{w^2G(w,1;1)}{2}-\tfrac{1}{6}\log^3\left( 1-\tfrac{1}{w} \right)-\tfrac{w^{2}}{4}\log^2\left( 1-\tfrac{1}{w} \right) \right.\\&&{}\displaystyle\left.-\tfrac{w^{2}}{4}\log\left( 1-\tfrac{1}{w} \right)\right]
\\[5pt]{\refstepcounter{BBB}\BBBlabel{eq:Li3sum6k3'}}\text{(B.\theBBB{})}&\frac{\mathsf H_{2 k}+\mathsf H_{3 k}-2\mathsf  H_{6 k}+2 \log 2}{2k-1}&\displaystyle-\tfrac{1}{2x \left(1-x^2\right)}\left(\sum _{w\in S^+_{x}}-\sum _{w\in S^-_{x}} \right)\Bigg\{\sum_{\tau\in\{-1,1\}}\left[ G(0,\tau,w;1)+G(0,w,\tau;1)-\tfrac{1-w^2}{2} G(w,\tau;1)\right]-\tfrac{w^2}2\log\left( 1-\tfrac{1}{w} \right)\Bigg\}
\\[15pt]{\refstepcounter{BBB}\BBBlabel{eq:Li3sum6k4'}}\text{(B.\theBBB{})}&\frac{\mathsf H_{2k-2}}{2k-1}&\displaystyle\tfrac{1}{2x\left( 1-x^2 \right)}\left(\sum _{w,w'\in S^+_{x}}-\sum _{w,w'\in S^-_{x}} \right)\left[ G(0,w,w';1)-\tfrac{1-w^{2}}{2} G(w,w';1)\right]-\tfrac{3}{8x\left( 1-x^2 \right)}\left(\sum _{w\in S^+_{x}}-\sum _{w\in S^-_{x}} \right) w^2\log\left( 1-\tfrac{1}{w} \right)
\\[15pt]{\refstepcounter{BBB}\BBBlabel{eq:Li3sum6k5'}}\text{(B.\theBBB{})}&\mathsf H_{k-1}^{(2)}&\displaystyle\tfrac{x}{2\left(1-x^2\right)\left(1-3x^{2}\right)}\sum_{\ell,m\in\{0,1\}}\sum _{\alpha\in\{-1,0,1\}}\sum _{w\in S_{x}}\left[(-1)^{\ell}\big(1+x^{2}\big)-\tfrac{(-1)^{m}ix\big( 3x^2 -5\big)}{\sqrt{3x^{2}-4}} \right]G(\sigma_{\ell,m}(x),\alpha ,w;1)\\[12pt]&&\displaystyle{}+\tfrac{1-x^2}{x\left(1-3x^{2}\right)}\sum _{\alpha\in\{-1,0,1\}}\sum _{w\in S_{x}}[G(x,\alpha ,w;1)-G(-x,\alpha ,w;1)]
\\[15pt]{\refstepcounter{BBB}\BBBlabel{eq:Li3sum6k6'}}\text{(B.\theBBB{})}&\mathsf H_{2k-1}^{(2)}&\displaystyle\tfrac{x}{4\left(1-x^2\right)\left(1-3x^{2}\right)}\sum_{m\in\{0,1\}}\sum _{\alpha\in\{-1,0,1\}}\sum _{w\in S^-_{x}}\left[\big(1+x^{2}\big)-\tfrac{(-1)^{m}ix\big( 3x^2 -5\big)}{\sqrt{3x^{2}-4}} \right]G(\sigma_{0,m}(x),\alpha,w;1)\\&&\displaystyle {}+\tfrac{x}{4\left(1-x^2\right)\left(1-3x^{2}\right)}\sum_{m\in\{0,1\}}\sum _{\alpha\in\{-1,0,1\}}\sum _{w\in S^+_{x}}\left[-\big(1+x^{2}\big)-\tfrac{(-1)^{m}ix\big( 3x^2 -5\big)}{\sqrt{3x^{2}-4}} \right]G(\sigma_{1,m}(x),\alpha,w;1)\\&&\displaystyle {}-\tfrac{1-x^2}{2x\left(1-3x^{2}\right)}\sum _{\alpha\in\{-1,0,1\}}\left[\sum _{w\in S^-_{x}}G(-x,\alpha,w;1)-\sum _{w\in S^+_{x}}G(x,\alpha,w;1)\right]\\&&{}\displaystyle-\tfrac{1}{2x\left(1-x^2\right)}\sum _{\alpha\in\{-1,0,1\}}\left(\sum _{w\in S^+_{x}}-\sum _{w\in S^-_{x}} \right)G(0,\alpha,w;1)\\[15pt]
\hline\hline\end{tabular}\end{tiny}\end{table}\setcounter{table}{3}\begin{table}[t]\caption{ \textit{(Continued)}}\begin{tiny}\begin{tabular}{c|L|L}\hline\hline \text{\No}&a_k&\displaystyle \sum_{k=1}^\infty \frac{ a_{k}\binom{2k}k}{k\binom{3k}k\binom{6k}{3k}}\left[ \frac{2^{2}}{x\big(1-x^{2}\big)} \right]^{2k}\vphantom{\frac{\frac\int1}{\frac1\int}}\\\hline  {\refstepcounter{AAA}\AAAlabel{eq:Li3sum6k5+6}}\text{(A.\theAAA{})}&\mathsf  H_{k-1}^{(2)}-5\mathsf H_{2k-1}^{(2)}&\displaystyle-\sum_{\ell,m\in\{0,1\}}\left[(-1)^{\ell}x-\tfrac{(-1)^{m}i\big( 3x^2 -2\big)}{\sqrt{3x^{2}-4}} \right]\tfrac{A_3(\sigma_{\ell,m}(x))}{1-3x^{2}}-\tfrac{2x[A_{3}(x)-A_{3}(-x)]}{1-3x^{2}}\vphantom{\frac{\frac\int1}{}}{}\\[10pt]&&{}\displaystyle-\sum_{w\in S^+_x}\left\{\sum_{\tau\in\{-1,1\}}\mathscr A_3(w,\tau)\tau -\sum_{m\in\{0,1\}}\left[x-\tfrac{(-1)^{m}i\big( 3x^2 -2\big)}{\sqrt{3x^{2}-4}} \right]\tfrac{\mathscr A_3(w,\sigma_{0,m}(x))}{3x^2-1}+\tfrac{2x\mathscr A_3(w,-x)}{3x^{2}-1}\right\}
\\[12pt]{}&&{}\displaystyle-\sum_{w\in S^-_x}\left\{\sum_{\tau\in\{-1,1\}}\mathscr A_3(w,\tau)\tau -\sum_{m\in\{0,1\}}\left[-x-\tfrac{(-1)^{m}i\big( 3x^2 -2\big)}{\sqrt{3x^{2}-4}} \right]\tfrac{\mathscr A_3(w,\sigma_{1,m}(x))}{3x^2-1}-\tfrac{2x\mathscr A_3(w,x)}{3x^{2}-1}\right\}\\[12pt]&&{}\displaystyle -\sum_{\tau,\tau'\in\{-1,1\}}\sum_{w\in S_{x}}G(\tau',\tau,w;1)\tau+2\sum_{\tau\in\{-1,1\}}\sum_{w\in S_{x}}G(0,\tau,w;1)\tau\\[12pt]&&{}\displaystyle -\left\{\tfrac{2}{1-3 x^2}\left[ x \mathfrak{l}(x)-\tfrac{i\big( 3x^2 -2\big) \mathfrak{m}(x)}{\sqrt{3 x^2-4}} \right]+\left[3\left(  \tanh ^{-1}\tfrac{1}{x} \right)^{2}
+\tfrac{1}{4}\log^{2} \tfrac{x^2-2+i \sqrt{3 x^2-4}}{x^2-2-i \sqrt{3 x^2-4}}\right]\right\}\log2\\[12pt]{}&&{}\displaystyle-\tfrac{2 \pi ^2}{3\left( 1-3 x^2 \right)}\left[ 6 x \tanh ^{-1}\tfrac{1}{x}+\tfrac{i\big( 3x^2 -2\big)
}{\sqrt{3 x^2-4}} \log \tfrac{x^2-2+i \sqrt{3 x^2-4}}{x^2-2-i \sqrt{3 x^2-4}}\right]\\[10pt]\hline\hline\multicolumn{3}{c}{}\\\hline\hline \No&a_k&\displaystyle \sum_{k=1}^\infty \frac{ a_{k}\binom{2k}k }{(2k-1)\binom{3k}k\binom{6k}{3k}}\left[ \frac{2^{2}}{x\big(1-x^{2}\big)} \right]^{2k}\vphantom{\frac{\frac\int1}{\frac1\int}}\\\hline  {\refstepcounter{BBB}\BBBlabel{eq:Li3sum6k5'+6'}}\text{(B.\theBBB{})}&\mathsf  H_{k}^{(2)}-5\mathsf H_{2k}^{(2)}&\displaystyle-\sum_{\ell,m\in\{0,1\}}\left[(-1)^{\ell}\big(1+x^{2}\big)-\tfrac{(-1)^{m}ix\big( 3x^2 -5\big)}{\sqrt{3x^{2}-4}}
\right]\tfrac{xA_3(\sigma_{\ell,m}(x))}{24\left(1-x^2\right)\left(1-3x^{2}\right)}-\tfrac{\big(1-x^2\big)[A_{3}(x)-A_{3}(-x)]}{12x\left(1-3x^{2}\right)}\vphantom{\frac{\frac\int1}{}}{}\\{}&&\displaystyle{}-\sum_{w\in S^+_x}\left\{\tfrac{\mathscr A_3(w,0)}{2 x \left(1-x^2\right)} +\sum_{m\in\{0,1\}}\left[\big(1+x^{2}\big)-\tfrac{(-1)^{m}ix\big( 3x^2 -5\big)}{\sqrt{3x^{2}-4}} \right]\tfrac{x\mathscr A_3(w,\sigma_{0,m}(x))}{4 \left(1-x^2\right) \left(1-3 x^2\right)}-\tfrac{\big(1-x^{2}\big)\mathscr A_3(w,-x)}{2 x \left(1-3 x^2\right)}\right\}\\{}&&\displaystyle{}-\sum_{w\in S^-_x}\left\{-\tfrac{\mathscr A_3(w,0)}{2 x \left(1-x^2\right)} +\sum_{m\in\{0,1\}}\left[-\big(1+x^{2}\big)-\tfrac{(-1)^{m}ix\big( 3x^2 -5\big)}{\sqrt{3x^{2}-4}} \right]\tfrac{x\mathscr A_3(w,\sigma_{1,m}(x))}{4 \left(1-x^2\right) \left(1-3 x^2\right)}+\tfrac{\big(1-x^{2}\big)\mathscr A_3(w,x)}{2 x \left(1-3 x^2\right)}\right\}\\&&\displaystyle{}+\tfrac{1}{x \left(1-x^2\right)}\left(\sum _{w\in S^+_{x}}-\sum _{w\in S^-_{x}} \right)\sum_{\tau\in\{-1,1\}}\left[ G(0,\tau,w;1)+G(0,w,\tau;1)-\tfrac{1-w^2}{2} G(w,\tau;1)\right]\\&&\displaystyle{}-\tfrac{2}{x \left(1-x^2\right)}\left(\sum _{w\in S^+_{x}}-\sum _{w\in S^-_{x}} \right)\left[G(w,1,1;1)+G(0,w,1;1)+\tfrac{w^2G(w,1;1)}{2}-\tfrac{1}{6}\log^3\left( 1-\tfrac{1}{w} \right)-\tfrac{w^{2}}{4}\log^2\left( 1-\tfrac{1}{w} \right) \right]\\&&{}-\frac{\frac{1}{2}+\log2}{
x\left(1-x^2\right)}\left[ \Li_2\left(\frac{1-x}{1+x}\right)-\Li_2\left(\frac{1+x}{1-x}\right)+\log ^2\left(1+\frac{1}{x}\right)-\log ^2\left(1-\frac{1}{x}\right)\right]-\frac{\left(\frac{1}{2}+\log2\right)x}{2\left(1-x^2\right) \left(1-3 x^2\right)}\left[\big(1+x^2\big)\mathfrak{l}(x)-\frac{ ix  \big(3 x^2-5\big)\mathfrak{m}(x)}{ \sqrt{3 x^2-4}}\right]\\&&\displaystyle{}+\tfrac{2}{x\left(1-x^2\right)}\left(\sum _{w\in S^+_{x}}-\sum _{w\in S^-_{x}} \right)\left[\sum _{\alpha\in\{-1,0,1\}}G(0,\alpha ,w;1)- \tfrac{G(0,w;1)}{2}-\tfrac{3w^{2}}{4}\log\left( 1-\tfrac{1}{w} \right)\right]\\&&\displaystyle{}-\tfrac{ \pi ^2-3}{3 \left(1-x^2\right) \left(1-3 x^2\right)}\left[\tfrac{3 x^4-3 x^2+2 }{ x }\tanh ^{-1}\tfrac{1}{x}+\tfrac{i x^2 \big(3 x^2-5\big) }{2 \sqrt{3 x^2-4} }\log \tfrac{x^2-2+i \sqrt{3 x^2-4}}{x^2-2-i \sqrt{3 x^2-4}}\right]-\tfrac{3}{2}\left(  \tanh ^{-1}\tfrac{1}{x} \right)^{2}
-\tfrac{1}{8}\log^{2} \tfrac{x^2-2+i \sqrt{3 x^2-4}}{x^2-2-i \sqrt{3 x^2-4}}\\&&\displaystyle{}-\tfrac{1}{1-3 x^2 }\left[ 6 x \tanh ^{-1}\tfrac{1}{x}+\tfrac{i\big( 3x^2 -2\big)
}{\sqrt{3 x^2-4}} \log \tfrac{x^2-2+i \sqrt{3 x^2-4}}{x^2-2-i \sqrt{3 x^2-4}}\right]+\tfrac{1}{2 \left(1-3 x^2\right)}\left[ x \mathfrak{l}(x)-\tfrac{i \big( 3x^2 -2\big) \mathfrak{m}(x)}{\sqrt{3 x^2-4}} \right]\\[8pt]\hline\hline\end{tabular}\end{tiny}\end{table}
\begin{proof}\begin{enumerate}[leftmargin=*,  label=(\Alph*),ref=(\Alph*),
widest=B, align=left] \item In view of \eqref{eq:Itab2a}, we can represent the series in  (A.\AAAref{eq:Li3sum6k1}) by\begin{align}
2\int_{0}^1\left( \frac{1}{t+1}-\frac{1}{t-1} \right)\Li_2\left(\left[ \frac{t\big(1-t^{2}\big)}{x\big(1-x^{2}\big)} \right]^{2} \right)\D t.
\end{align}From \eqref{eq:LiSx}, we know that \begin{align}\Li_2
\left(\left[ \frac{t\big(1-t^{2}\big)}{x\big(1-x^{2}\big)} \right]^{2} \right)=-2\sum _{\alpha\in\{-1,0,1\}}\sum _{w\in S_{x}}G(\alpha ,w;t),\label{eq:Li2fib6k}
\end{align}and the last column of    (A.\AAAref{eq:Li3sum6k1})  immediately follows from the GPL recursion in \eqref{eq:GPL_rec}.

By Panzer's  \texttt{HyperInt} \cite{Panzer2015}, one can check that {\allowdisplaybreaks
\begin{align}
\int_0^1\frac{\log\left( 1-\frac{t}{w} \right)}{1-t^{2}}\log t\D t={}&\frac{G(0,1,w;1)-G(0,-1,w;1)}{2}\label{eq:loglogA}\end{align}and\begin{align}\small\begin{split}
\int_0^1\frac{\log\frac{1-\frac{t}{w}}{1-\frac{1}{w}}}{1-t^{2}}\log \frac{1-t^{2}}{2}\D t={}&\frac{G(-1,1,w;1)-G(-1,-1,w;1)}{2}+\underset{\text{regularized }G(1,1,w;1)/2}{\underbrace{\frac{G(0,1-w,0;1)+G(1-w,0,0;1)}{2}}
}\\{}&-\underset{\text{regularized }G(1,-1,w;1)/2}{\underbrace{\frac{G(0,1-w,2;1)+G(1-w,0,2;1)}{2}}
}-\frac{1}{4}\left(\log^22- \frac{\pi^2}{3} \right)\log\left( 1-\frac{1}{w} \right),\end{split}\label{eq:loglogB}
\end{align}
}so both (A.\AAAref{eq:Li3sum6k2}) and (A.\AAAref{eq:Li3sum6k3}) hold true.

The infinite series in (A.\AAAref{eq:Li3sum6k4}) has an integral representation \begin{align}
2\int_0^1\log^{2}\left(1-\left[ \frac{t\big(1-t^{2}\big)}{x\big(1-x^{2}\big)} \right]^{2} \right)\frac{\D t}{1-t^2},
\end{align} because (cf.\ \cite[(25)]{Mezo2014})\begin{align}
\sum_{k=1}^\infty\frac{\mathsf H_{k-1}}{k}z^k=\int_{0}^z\sum_{k=1}^\infty\mathsf H_{k-1}x^{k-1}\D x=-\int_{0}^z\frac{\log(1-x)}{1-x}\D x=\frac{\log^{2}(1-z)}{2}\label{eq:Hkloglog}
\end{align}for $ |z|<1$. Here, by the shuffle algebra of GPLs \cite[(2.4)]{Frellesvig2016}, we have\begin{align}
\frac{1}{2}\log^{2}\left(1-\left[ \frac{t\big(1-t^{2}\big)}{x\big(1-x^{2}\big)} \right]^{2} \right)=\sum_{w,w'\in S_x}G(w,w';t),
\end{align}so the last column of (A.\AAAref{eq:Li3sum6k4})  is a consequence of the GPL recursion \eqref{eq:GPL_rec}.

The series in   (A.\AAAref{eq:Li3sum6k5}) is equal to the following integral:\begin{align}
4\int_0^1\frac{\left[ \frac{t(1-t^{2})}{x(1-x^{2})} \right]^{2}\Li_2\left(\left[ \frac{t(1-t^{2})}{x(1-x^{2})} \right]^{2} \right)}{1-\left[ \frac{t(1-t^{2})}{x(1-x^{2})} \right]^{2}}\frac{\D t}{1-t^2},
\end{align} where we have a partial fraction\begin{align}\begin{split}
\frac{4}{1-t^2}\frac{\left[ \frac{t(1-t^{2})}{x(1-x^{2})} \right]^{2}}{1-\left[ \frac{t(1-t^{2})}{x(1-x^{2})} \right]^{2}}={}&-\frac{1}{1-3x^{2}}\sum_{\ell,m\in\{0,1\}}\left[(-1)^{\ell}x-\frac{(-1)^{m}i\left( 3x^2 -2\right)}{\sqrt{3x^{2}-4}} \right]\frac{1}{t-\sigma_{\ell,m}(x)}\\{}&-\frac{2x}{1-3x^{2}}\left( \frac{1}{t-x}-\frac{1}{t+x} \right).\label{eq:pf1}
\end{split}
\end{align}Combining this with \eqref{eq:Li2fib6k} in an application of the GPL recursion \eqref{eq:GPL_rec}, we reach the last column of    (A.\AAAref{eq:Li3sum6k5}).

Thanks to the second half of \eqref{eq:MezoG}, we may  evaluate the series in
(A.\AAAref{eq:Li3sum6k6}) by\begin{align}
2\int_0^1\left[\frac{\Li_2\left( \frac{t(1-t^{2})}{x(1-x^{2})} \right)}{1-\frac{t(1-t^{2})}{x(1-x^{2})}} - \frac{\Li_2\left(- \frac{t(1-t^{2})}{x(1-x^{2})} \right)}{1+\frac{t(1-t^{2})}{x(1-x^{2})}}\right] \frac{\frac{t(1-t^{2})}{x(1-x^{2})}\D t}{1-t^2}.
\end{align}    Meanwhile, we note that
\begin{align}
\begin{split}
\frac{2}{1-t^2}\frac{\frac{t(1-t^{2})}{x(1-x^{2})}}{1+(-1)^{\ell}\frac{t(1-t^{2})}{x(1-x^{2})}}={}&\frac{(-1)^{\ell}}{1-3x^{2}}\sum_{m\in\{0,1\}}\left[(-1)^{\ell}x-\frac{(-1)^{m}i\left( 3x^2 -2\right)}{\sqrt{3x^{2}-4}} \right]\frac{1}{t-\sigma_{\ell,m}(x)}\\{}&-\frac{2 x}{1-3 x^2 }\frac{1}{t+(-1)^{\ell}x}
\end{split}
\end{align}and [cf.\ \eqref{eq:LiSx+}--\eqref{eq:LiSx-}]\begin{align}
\Li_2
\left(\pm\frac{t\big(1-t^{2}\big)}{x\big(1-x^{2}\big)} \right)=-\sum _{\alpha\in\{-1,0,1\}}\sum _{w\in S^\pm_{x}}G(\alpha ,w;t)
\end{align}both hold. Therefore, the last column  of   (A.\AAAref{eq:Li3sum6k6}) is a result of the GPL recursion \eqref{eq:GPL_rec}.

We verify (A.\AAAref{eq:Li3sum6k5+6}) in two steps.

First, we sum over \eqref{eq:log_sqr_1-tt} to get\begin{align}\begin{split}
&
\sum_{k=1}^\infty\frac{\binom{2k}{k}\left[ \left(\mathsf H_{k-1}-\mathsf H_{2k-1}+2\log2\right)^2 +\mathsf H_{k-1}^{(2)}-5\mathsf H_{2k-1}^{(2)}+\frac{2\pi^2}{3}\right]}{k\binom{3k}{k}\binom{6k}{3k}}\left[ \frac{2^{2}}{x\big(1-x^{2}\big)} \right]^{2k}
\\={}&4\int_0^1\frac{\left[ \frac{t(1-t^{2})}{x(1-x^{2})} \right]^{2}\log^2\frac{t^{2}}{1-t^{2}}}{1-\left[ \frac{t(1-t^{2})}{x(1-x^{2})} \right]^{2}}\frac{\D t}{1-t^2}\\={}&\sum_{\ell,m\in\{0,1\}}\left[(-1)^{\ell}x-\frac{(-1)^{m}i\left( 3x^2 -2\right)}{\sqrt{3x^{2}-4}} \right]\frac{A_3(\sigma_{\ell,m}(x))}{3x^{2}-1}+\frac{2x[A_{3}(x)-A_{3}(-x)]}{3x^{2}-1}\end{split}\label{eq:log_sqr_int}
\end{align}after falling back on the definition of $A_3(w) $ in \eqref{eq:A3defn} and the  partial fraction expansion in \eqref{eq:pf1}.

Second, after reassembling the two equations in \eqref{eq:MezoG}, we get \begin{align}
\sum_{k=1}^\infty(\mathsf H_k-\mathsf H_{2k})z^{2k}=\frac{1}{2}\left[ \frac{\Li_1(z)}{1+z}+\frac{\Li_1(-z)}{1-z} \right]
\end{align}for $ |z|<1$, so  \eqref{eq:Itab2b} and \eqref{eq:Itab2c} bring us \begin{align}\small
\begin{split}&
\sum_{k=1}^\infty\frac{\left(\mathsf H_{k-1}-\mathsf H_{2k-1}+2\log2\right)(\mathsf H_k-\mathsf H_{2k})\binom{2k}{k}}{k\binom{3k}{k}\binom{6k}{3k}}\left[ \frac{2^{2}}{x\big(1-x^{2}\big)} \right]^{2k}\\={}&-2\int_0^1\left[\frac{\Li_1\left( \frac{t(1-t^{2})}{x(1-x^{2})} \right)}{1+\frac{t(1-t^{2})}{x(1-x^{2})}}+\frac{\Li_1\left(- \frac{t(1-t^{2})}{x(1-x^{2})} \right)}{1-\frac{t(1-t^{2})}{x(1-x^{2})}}\right]\frac{\log\frac{t^{2}}{1-t^{2}}\D t}{1-t^2}\\={}&\sum_{w\in S^+_x}\left\{\sum_{\tau\in\{-1,1\}}\mathscr A_3(w,\tau)\tau -\sum_{m\in\{0,1\}}\left[x-\frac{(-1)^{m}i\left( 3x^2 -2\right)}{\sqrt{3x^{2}-4}} \right]\frac{\mathscr A_3(w,\sigma_{0,m}(x))}{3x^2-1}+\frac{2x\mathscr A_3(w,-x)}{3x^{2}-1}\right\}
\\{}&+\sum_{w\in S^-_x}\left\{\sum_{\tau\in\{-1,1\}}\mathscr A_3(w,\tau)\tau -\sum_{m\in\{0,1\}}\left[-x-\frac{(-1)^{m}i\left( 3x^2 -2\right)}{\sqrt{3x^{2}-4}} \right]\frac{\mathscr A_3(w,\sigma_{1,m}(x))}{3x^2-1}-\frac{2x\mathscr A_3(w,x)}{3x^{2}-1}\right\},\end{split}\label{eq:log_Li1_int}
\end{align}where the bivariate function $ \mathscr A_3(a,b)$ is defined in \eqref{eq:AA3_defn}. Since the closed form evaluation\begin{align}\small
\begin{split}&
\sum_{k=1}^\infty\frac{\left[\left(\mathsf H_{k-1}-\mathsf H_{2k-1}+2\log2\right)\big( \!-\!\frac{1}{2k} +2\log2\big)+\frac{2\pi^2}{3}\right]\binom{2k}{k}}{k\binom{3k}{k}\binom{6k}{3k}}\left[ \frac{2^{2}}{x\big(1-x^{2}\big)} \right]^{2k}\\={}&\sum_{\tau,\tau'\in\{-1,1\}}\sum_{w\in S_{x}}G(\tau',\tau,w;1)\tau-2\sum_{\tau\in\{-1,1\}}\sum_{w\in S_{x}}G(0,\tau,w;1)\tau\\{}&+\left\{\frac{2}{1-3 x^2}\left[ x \mathfrak{l}(x)-\frac{i \left(3 x^2-2\right) \mathfrak{m}(x)}{\sqrt{3 x^2-4}} \right]+\left[3\left(  \tanh ^{-1}\frac{1}{x} \right)^{2}
+\frac{1}{4}\log^{2} \frac{x^2-2+i \sqrt{3 x^2-4}}{x^2-2-i \sqrt{3 x^2-4}}\right]\right\}\log2\\{}&+\frac{2 \pi ^2}{3\left( 1-3 x^2 \right)}\left[ 6 x \tanh ^{-1}\frac{1}{x}+\frac{i \left(3 x^2-2\right)
}{\sqrt{3 x^2-4}} \log \frac{x^2-2+i \sqrt{3 x^2-4}}{x^2-2-i \sqrt{3 x^2-4}}\right]
\end{split}\label{eq:log_misc}
\end{align}is available from entries (A.\AAref{eq:sum6}), (A.\AAref{eq:sum6''}), and (A.\AAref{eq:sum6'''}) of
Table \ref{tab:6kLi2} together with entries (A.\AAAref{eq:Li3sum6k2}) and (A.\AAAref{eq:Li3sum6k3})  of Table \ref{tab:6kLi3}, we may compute Table \ref{tab:6kLi3}(A.\AAAref{eq:Li3sum6k5+6}) via the linear combination \eqref{eq:log_sqr_int}$-$\eqref{eq:log_Li1_int}$-$\eqref{eq:log_misc}.

 \item We may evaluate the infinite series in  (B.\BBBref{eq:Li3sum6k1'}) through the following integral:\begin{align}\frac12
\int_{0}^1\left[\Li_2
\left(\frac{t\big(1-t^{2}\big)}{x\big(1-x^{2}\big)} \right) -\Li_2
\left(-\frac{t\big(1-t^{2}\big)}{x\big(1-x^{2}\big)} \right)\right]\frac{t\big(1-t^{2}\big)}{x\big(1-x^{2}\big)}\frac{\D t}{t^2}.
\end{align}Since we can verify \begin{align}\begin{split}
\int_0^1\frac{1-t^2}{t}G(\alpha ,w;t)\D t={}&G(0,\alpha,w;1)+\frac{\alpha^{2}-1}{2}G(\alpha,w;1)+\frac{(1-w)\alpha}{2}\log\left( 1-\frac{1}{w} \right)\\{}&+\frac{1-w^{2}}{4}\log\left( 1-\frac{1}{w} \right)-\frac{\alpha}{2}-\frac{w}{4}-\frac{1}{8}
\end{split}
\end{align}in Panzer's \texttt{HyperInt} \cite{Panzer2015}, and have the vanishing sums\begin{align}
\sum _{\alpha\in\{-1,0,1\}}\alpha,\quad \sum_{w\in S^\pm_x}\log\left( 1-\frac{1}{w} \right),\quad \sum_{w\in S^\pm_x} w,\label{eq:vanishingS}
\end{align}the last column of  (B.\BBBref{eq:Li3sum6k1'}) emerges immediately.

 In view of \eqref{eq:2k-1_int_repn2} and \eqref{eq:2k-1_int_repn3}, we may construct the following integral representations for  (B.\BBBref{eq:Li3sum6k2'}) and (B.\BBBref{eq:Li3sum6k3'}):\begin{align}
&\int_{0}^1\log\left( \frac{1+\frac{t(1-t^{2})}{x(1-x^{2})}}{1-\frac{t(1-t^{2})}{x(1-x^{2})}} \right)\frac{t\big(1-t^{2}\big)}{x\big(1-x^{2}\big)}\frac{\log t\D t}{t^2},\\&\int_{0}^1\log\left( \frac{1+\frac{t(1-t^{2})}{x(1-x^{2})}}{1-\frac{t(1-t^{2})}{x(1-x^{2})}} \right)\frac{t\big(1-t^{2}\big)}{x\big(1-x^{2}\big)}\frac{\log \left(1-t^{2}\right)\D t}{2t^2}.
\end{align} In parallel to \eqref{eq:loglogA} and \eqref{eq:loglogB}, we have \cite{Panzer2015}{\allowdisplaybreaks
\begin{align}
\begin{split}
&\int_{0}^1\frac{\big(1-t^{2}\big)\log\left( 1-\frac{t}{w} \right)}{t}\log t\D t\\={}&G(w,1,1;1)+G(0,w,1;1)+\frac{w^2G(w,1;1)}{2}-\frac{1}{6}\log^3\left( 1-\frac{1}{w} \right)\\{}&-\frac{w^{2}}{4}\log^2\left( 1-\frac{1}{w} \right)+\left( \frac{1-w^2}{4} -\frac{\pi^2}{6}\right)\log\left( 1-\frac{1}{w} \right)-\frac{3w+1}{4},
\end{split}\\\begin{split}&
\int_{0}^1\frac{\big(1-t^{2}\big)\log\left( 1-\frac{t}{w} \right)}{t}\log \big(1-t^{2}\big)\D t\\={}&G(0,-1,w;1)+G(0,1,w;1)+G(0,w,-1;1)+G(0,w,1;1)\\{}&-\frac{1-w^2}{2}\left[ G(w,-1;1)+G(w,1;1)-\log\left( 1-\frac{1}{w} \right) \right]-\frac{3-2\log2}{2}w-\frac{1}{2},
\end{split}
\end{align}
}hence the GPL forms of   (B.\BBBref{eq:Li3sum6k2'}) and (B.\BBBref{eq:Li3sum6k3'}).

From the generating function in \eqref{eq:Hkloglog}, we know that \begin{align}
\sum_{k=1}^\infty \frac{\mathsf H_{2k-2}}{2k-1}z^{2k-1}=\frac{\log^2(1-z)-\log^2(1+z)}{4}
\end{align}holds for $|z|<1$. Setting $ z=\frac{t(1-t^{2})}{x(1-x^2)}$ in the equation above, we may convert the infinite series for (B.\BBBref{eq:Li3sum6k4'}) into\begin{align}\frac14
\int_0^1\left[ \log^2 \left( 1-\frac{t(1-t^{2})}{x(1-x^{2})} \right)-\log^2 \left( 1+\frac{t(1-t^{2})}{x(1-x^{2})} \right)\right]\frac{t\big(1-t^{2}\big)}{x\big(1-x^{2}\big)}\frac{\log t\D t}{t^2}.
\end{align}Bearing in mind that  \cite{Panzer2015} \begin{align}
\begin{split}&
\int_{0}^1\frac{\big(1-t^{2}\big)\log\left( 1-\frac{t}{a} \right)}{t}\log\left( 1-\frac{t}{b} \right)\D t\\={}&G(0,a,b;1)+G(0,b,a;1)-\frac{1-a^{2}}{2}G(a,b;1)-\frac{1-b^2}{2}G(b,a;1)\\{}&+\frac{1-a^2-2ab+2b}{4}\log\left( 1-\frac{1}{a} \right)+\frac{1-b^2-2ab+2a}{4}\log\left( 1-\frac{1}{b} \right)-\frac{3(a+b)+1}{4}
\end{split}
\end{align}and that sums like \eqref{eq:vanishingS} should vanish, we get the GPL representation of  (B.\BBBref{eq:Li3sum6k4'}).

One can work out the following integral representations for (B.\BBBref{eq:Li3sum6k5'}) and (B.\BBBref{eq:Li3sum6k6'}):\begin{align}
\int_0^1\frac{\left[ \frac{t(1-t^{2})}{x(1-x^{2})} \right]^{2}\Li_2\left(\left[ \frac{t(1-t^{2})}{x(1-x^{2})} \right]^{2} \right)}{1-\left[ \frac{t(1-t^{2})}{x(1-x^{2})} \right]^{2}}\frac{\D t}{t^2},&\\\int_0^1\left[\frac{\Li_2\left( \frac{t(1-t^{2})}{x(1-x^{2})} \right)}{1-\frac{t(1-t^{2})}{x(1-x^{2})}} - \frac{\Li_2\left(- \frac{t(1-t^{2})}{x(1-x^{2})} \right)}{1+\frac{t(1-t^{2})}{x(1-x^{2})}}\right] \frac{\frac{t(1-t^{2})}{x(1-x^{2})}\D t}{2t^2},&
\end{align}and recast them into the tabulated GPL forms after taking partial fractions. We omit the details.

For (B.\BBBref{eq:Li3sum6k5'+6'}), simply turn  [cf.\ \eqref{eq:log_sqr_tt}]
\begin{align}
\begin{split}
&\sum_{k=1}^\infty\frac{\left[ \left(\mathsf H_{k}-\mathsf H_{2k}+\frac{2}{2k-1}+2\log2\right)^2 +\mathsf H_{k}^{(2)}-5\mathsf H_{2k}^{(2)}+\frac{4}{(2k-1)^{2}}+\frac{2\pi^2}{3}\right]\binom{2k}{k}}{(2k-1)\binom{3k}{k}\binom{6k}{3k}}\left[ \frac{2^{2}}{x\big(1-x^{2}\big)} \right]^{2k}
\\={}&\int_0^1\frac{\left[ \frac{t(1-t^{2})}{x(1-x^{2})} \right]^{2}\log^2\frac{t^{2}}{1-t^{2}}}{1-\left[ \frac{t(1-t^{2})}{x(1-x^{2})} \right]^{2}}\frac{\D t}{t^2},
\end{split}\end{align}and\begin{align}\begin{split}
&\sum_{k=1}^\infty\frac{\left(\mathsf H_{k}-\mathsf H_{2k}+\frac{2}{2k-1}+2\log2\right)\left(\mathsf H_{k}-\mathsf H_{2k}
-\frac{1}{2k}\right)\binom{2k}{k}}{(2k-1)\binom{3k}{k}\binom{6k}{3k}}\left[ \frac{2^{2}}{x\big(1-x^{2}\big)} \right]^{2k}\\={}&-\int_0^1\left[\frac{\Li_1\left( \frac{t(1-t^{2})}{x(1-x^{2})} \right)}{1+\frac{t(1-t^{2})}{x(1-x^{2})}}-\frac{\Li_1\left(- \frac{t(1-t^{2})}{x(1-x^{2})} \right)}{1-\frac{t(1-t^{2})}{x(1-x^{2})}}\right]\frac{\frac{t(1-t^{2})}{x(1-x^{2})}\log\frac{t^{2}}{1-t^{2}}\D t}{2t^2},
\end{split}
\end{align}
into GPL forms via the tricks designed for the proof of (A.\AAAref{eq:Li3sum6k5+6}), before evaluating \begin{align}\small
\begin{split}
\sum_{k=1}^\infty\frac{\left[\left(\mathsf H_{k}-\mathsf H_{2k}+\frac{2}{2k-1}+2\log2\right)\left( \frac{1}{2k}+\frac{2}{2k-1}+2\log2\right)+\frac{4}{(2k-1)^{2}}+\frac{2\pi^2}{3}\right]\binom{2k}{k}}{(2k-1)\binom{3k}{k}\binom{6k}{3k}}\left[ \frac{2^{2}}{x\big(1-x^{2}\big)} \right]^{2k}
\end{split}\end{align}through entries (A.\AAref{eq:sum6}), (A.\AAref{eq:sum6''}), (A.\AAref{eq:sum6'''}), (D.\DDref{eq:sum6(2k-1)}), (D.\DDref{eq:sum6(2k-1)''}), and  (D.\DDref{eq:sum6(2k-1)'''}) in Table \ref{tab:6kLi2}, as well as entries (B.\BBBref{eq:Li3sum6k1'})--(B.\BBBref{eq:Li3sum6k3'}) in Table \ref{tab:6kLi3}.
\qedhere\end{enumerate}
\end{proof}
Some explicit computations of cyclotomic multiple zeta values bring us the corollary below.
\begin{corollary}[cf.\ {\cite[Conjecture 5.5(iii)]{Sun2023}}]
The closed-form evaluations in Table \ref{tab:6kSSO} hold true. Consequently, we have \eqref{eq:conj5.5iii}.\end{corollary}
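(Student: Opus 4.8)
The plan is to obtain \eqref{eq:conj5.5iii} by specializing the tabulated identities of \S\ref{sec:polylog6k} at $x=\sqrt2$. First observe that $\frac{2^{2}}{\sqrt2\,\bigl(1-(\sqrt2)^{2}\bigr)}=-2\sqrt2$, so the factor $\bigl[\frac{2^{2}}{x(1-x^{2})}\bigr]^{2k}$ collapses to $8^{k}$ at $x=\sqrt2$, while $\bigl|\sqrt2\,(1-2)\bigr|=\sqrt2>\frac{2}{3\sqrt3}$ keeps every entry of Tables \ref{tab:6kLi2}--\ref{tab:6kLi3} applicable there. At this point $3x^{2}-4=2$ and $\sigma_{\ell,m}(\sqrt2)=\frac{\sqrt2}{2}\bigl[(-1)^{\ell}+(-1)^{m}i\bigr]$, so $S_{\sqrt2}=\{\pm\sqrt2,\ e^{\pm i\pi/4},\ e^{\pm3i\pi/4}\}$; together with the arguments $0,\pm1$ already occurring, every GPL in the relevant entries converts into a level-$8$ CMZV of weight at most $3$, i.e.\ into $\bigcup_{r\le3}\mathfrak Z_r(8)$. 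Feeding the $x=\sqrt2$ specializations of entries (A.\AAAref{eq:Li3sum6k5}), (A.\AAAref{eq:Li3sum6k6}), (B.\BBBref{eq:Li3sum6k5'}), (B.\BBBref{eq:Li3sum6k6'}), (B.\BBBref{eq:Li3sum6k1'}) of Table \ref{tab:6kLi3} and of entries (A.\AAref{eq:sum6}), (D.\DDref{eq:sum6(2k-1)}), (D.\DDref{eq:sum6(2k-1)'''}) of Table \ref{tab:6kLi2} to Au's \texttt{MultipleZetaValues} package \cite{Au2022a} for basis reduction then yields the closed forms recorded in Table \ref{tab:6kSSO}; here the weight-$\le2$ ingredients (the values $\mathfrak L(\sqrt2),\mathfrak M(\sqrt2),\mathfrak l(\sqrt2),\mathfrak m(\sqrt2),\Li_2(\pm1/\sqrt2)$ and the like) are already available from the proof of Corollary \ref{cor:Z2(8)}, and only the weight-$3$ quantities $A_3(w)$ and $\mathscr A_3(a,b)$ at these arguments call for fresh reductions.

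To pass from that table to \eqref{eq:conj5.5iii}, I would resolve the summand by partial fractions: since $\frac{5k-1}{k(2k-1)}=\frac1k+\frac3{2k-1}$ and $\frac{6k-1}{k(2k-1)^{3}}=\frac1k-\frac2{2k-1}+\frac2{(2k-1)^{2}}+\frac4{(2k-1)^{3}}$, the left-hand side of \eqref{eq:conj5.5iii} equals the rational combination $16S_{1}+48S_{2}-3S_{3}-9S_{4}-12S_{5}+24S_{6}-24S_{7}-48S_{8}$, where $S_{i}\colonequals\sum_{k\ge1}a_{k}\binom{2k}k8^{k}/\bigl(\binom{3k}k\binom{6k}{3k}\bigr)$ with $a_{k}$ running respectively through $\mathsf H_{2k-1}^{(2)}/k$, $\mathsf H_{2k-1}^{(2)}/(2k-1)$, $\mathsf H_{k-1}^{(2)}/k$, $\mathsf H_{k-1}^{(2)}/(2k-1)$, $1/k$, $1/(2k-1)$, $1/(2k-1)^{2}$, $1/(2k-1)^{3}$. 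Matching denominators with the panels of Tables \ref{tab:6kLi2}--\ref{tab:6kLi3}, the series $S_{1},\dots,S_{8}$ are exactly the $x=\sqrt2$ values of, in order, (A.\AAAref{eq:Li3sum6k6}), (B.\BBBref{eq:Li3sum6k6'}), (A.\AAAref{eq:Li3sum6k5}), (B.\BBBref{eq:Li3sum6k5'}), (A.\AAref{eq:sum6}), (D.\DDref{eq:sum6(2k-1)}), (D.\DDref{eq:sum6(2k-1)'''}), (B.\BBBref{eq:Li3sum6k1'}). Substituting the Table \ref{tab:6kSSO} evaluations and collecting like terms, one checks that every positive-weight basis element of $\bigcup_{r\le3}\mathfrak Z_r(8)$ --- the triple polylogarithms, the products $\widetilde\lambda\,\RE\Li_{1,1}(i,\theta)$, $\zeta(3)$, $\pi G$, $\pi\,\I\Li_2(\theta)$, the logarithm monomials, and the remaining weight-$1$ and weight-$2$ constants --- cancels, leaving precisely $\frac{\pi^{3}}{12\sqrt2}$.

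The one genuinely laborious step is the basis reduction of the first paragraph: it amounts to pushing a sizeable collection of GPL and (di)logarithm expressions of weight up to $3$, evaluated at the level-$8$ arguments in $S_{\sqrt2}\cup\{0,\pm1\}$, through Au's package and then tracking the resulting rational coefficients faithfully. No analytic input beyond the toolkit of \S\ref{sec:toolkit}, the tables of \S\ref{sec:polylog6k}, and the GPL--CMZV correspondence \eqref{eq:MPL_GPL} is required; once Table \ref{tab:6kSSO} is in hand, the collapse of the weighted sum to $\frac{\pi^{3}}{12\sqrt2}$ is mechanical.
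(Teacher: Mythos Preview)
Your overall plan---specialize Table~\ref{tab:6kLi3} at $x=\sqrt2$, reduce via Au's package, then take rational combinations---matches the paper's, but there is a gap in the justification. The assertion that ``every GPL in the relevant entries converts into a level-$8$ CMZV'' is not warranted for entries (A.\AAAref{eq:Li3sum6k5}), (A.\AAAref{eq:Li3sum6k6}), (B.\BBBref{eq:Li3sum6k5'}), (B.\BBBref{eq:Li3sum6k6'}) taken individually. At $x=\sqrt2$ those entries contain GPLs of the shape $G(\sigma_{\ell,m}(\sqrt2),\alpha,w;1)$ or $G(\pm\sqrt2,\alpha,w;1)$ with both nontrivial arguments lying in the \emph{same} half $S^{\pm}_{\sqrt2}$; the paper's \texttt{IterIntDoableQ} test only confirms level-$8$ reducibility for the ``mixed'' configuration $a\in S^{+}_{\sqrt2},\,b\in S^{-}_{\sqrt2}$. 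The paper circumvents this by forming the specific combinations $(\text{A.\AAAref{eq:Li3sum6k5}})-2\times(\text{A.\AAAref{eq:Li3sum6k6}})$, $(\text{B.\BBBref{eq:Li3sum6k5'}})-2\times(\text{B.\BBBref{eq:Li3sum6k6'}})$, and the precomputed entries (A.\AAAref{eq:Li3sum6k5+6}), (B.\BBBref{eq:Li3sum6k5'+6'}), which are engineered so that only mixed-pair GPLs survive and hence do land in $\mathfrak Z_3(8)$. That is why Table~\ref{tab:6kSSO} records the combinations $(\mathsf H_{k-1}^{(2)}-2\mathsf H_{2k-1}^{(2)})/k$, $(\mathsf H_{k-1}^{(2)}-5\mathsf H_{2k-1}^{(2)})/k$, etc., rather than your $S_1,\dots,S_4$ separately.

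This also exposes a circularity in your second paragraph: you cannot ``substitute the Table~\ref{tab:6kSSO} evaluations'' for $S_1,\dots,S_4$ because that table never gives them individually. The repair is straightforward once noticed: since $16\mathsf H_{2k-1}^{(2)}-3\mathsf H_{k-1}^{(2)}=\tfrac{1}{3}(\mathsf H_{k-1}^{(2)}-2\mathsf H_{2k-1}^{(2)})-\tfrac{10}{3}(\mathsf H_{k-1}^{(2)}-5\mathsf H_{2k-1}^{(2)})$, the block $16S_1-3S_3$ is a rational combination of Table~\ref{tab:6kSSO} entries (\SSOref{eq:6kLi3(8)b}) and (\SSOref{eq:6kLi3(8)b'}), and similarly $48S_2-9S_4$ is obtained from entries (\SSOref{eq:6kLi3(8)c}) and (\SSOref{eq:6kLi3(8)c'}) after adjusting for the shift $\mathsf H_k^{(2)}-5\mathsf H_{2k}^{(2)}=(\mathsf H_{k-1}^{(2)}-5\mathsf H_{2k-1}^{(2)})-\tfrac{1}{4k^2}$ via the known lower-weight series $\sum 1/k$, $\sum 1/k^2$, $\sum 1/(2k-1)$. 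With this reorganization your argument becomes the paper's.
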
\begin{table}[t]\caption{Evaluations of
selected infinite series by $ \mathfrak Z_k(8),k\in\{1,2,3\}$, where $ \theta\colonequals e^{\pi i/4}$, $\lambda\colonequals\log2 $, $\widetilde \lambda\colonequals\log\big(1+\sqrt{2}\big)$, and $ G\colonequals\I\Li_2(i)$\label{tab:6kSSO}}\begin{scriptsize}\newcounter{SSO}
\begin{tabular}{c|L|L}\hline\hline {\No}&a_k&\displaystyle\sum_{k=1}^\infty\f{a_{k}\bi{2k}k}
{\bi{3k}k\bi{6k}{3k}}2^{3k}\vphantom{\frac{\frac\int1}{}}\\[12pt]\hline  {\refstepcounter{SSO}\SSOlabel{eq:6kLi3(8)a}}\text{(\theSSO{})}&\frac{1}{(2k-1)^3}&\sqrt{2}\RE\left[ \Li_{1,1,1}(i,1,\theta)- \Li_{1,1,1}(i,1,-\theta)+\Li_3(\theta)\right]+\frac{3\zeta(3)}{128\sqrt{2}}-\frac{\RE\Li_{1,1}(i,\theta)}{\sqrt{2}}-\frac{\pi\I\Li_2(\theta)}{\sqrt{2}}+\frac{\pi G}{2\sqrt{2}}\vphantom{\frac{\int1}{}}\\&&{}-\frac{6 \lambda ^2 \widetilde{\lambda }-8 \widetilde{\lambda }^3-12 \widetilde{\lambda }^2+12 \lambda  \widetilde{\lambda }-144 \widetilde{\lambda }-9 \lambda ^2}{96 \sqrt{2}}-\frac{3 \pi }{8 \sqrt{2}}-\frac{\pi ^2 \big(12 \lambda-22 \widetilde{\lambda } +13\big)}{384 \sqrt{2}}\\[5pt]  {\refstepcounter{SSO}\SSOlabel{eq:6kLi3(8)b}}\text{(\theSSO{})}&\frac{\mathsf H_{k-1}^{(2)}-2\mathsf H_{2k-1}^{(2)}}{k}&\frac{8\sqrt{2}}{5}\RE\left[3\Li_{1,1,1}(i,1,\theta)- 3\Li_{1,1,1}(i,1,-\theta) +13 \Li_3(\theta)\right]\\&&{}+4\sqrt{2}\I\left[ 8\Li_{1,1,1}(i,1,\theta)-3\Li_{1,1,1}(i\theta,1,i) -11\Li_{1,1,1}(\theta,1,\theta)\right]+\frac{39 \zeta (3)}{80 \sqrt{2}}-\frac{\sqrt{2}\big(110 \widetilde \lambda+25\lambda+12 \pi \big)\I\Li_2(\theta)}{5}  \\&&{}+\frac{360 G \widetilde{\lambda }+60 G \lambda-3 \lambda ^2 \widetilde{\lambda }+28 \widetilde{\lambda }^3 }{10 \sqrt{2}}-\frac{\pi  \big(384 G+160 \lambda  \widetilde{\lambda }-380 \widetilde{\lambda }^2+65 \lambda ^2\big)}{80 \sqrt{2}}-\frac{\pi ^2 \big(13 \widetilde{\lambda }+6 \lambda \big)}{40 \sqrt{2}}-\frac{139 \pi ^3}{240 \sqrt{2}}\\[5pt]  {\refstepcounter{SSO}\SSOlabel{eq:6kLi3(8)c}}\text{(\theSSO{})}
&\frac{\mathsf H_{k-1}^{(2)}-2\mathsf H_{2k-1}^{(2)}}{2k-1}&-\frac{2\sqrt{2}}{5}\RE\left[9\Li_{1,1,1}(i,1,\theta)- 9\Li_{1,1,1}(i,1,-\theta)
+\frac{67}{3}\Li_3(\theta)\right]-\frac{\I\left[ 8\Li_{1,1,1}(i,1,\theta)-3\Li_{1,1,1}(i\theta,1,i) -11\Li_{1,1,1}(\theta,1,\theta)\right]}{\sqrt{2}}\\&&{}-\frac{67 \zeta (3)}{320 \sqrt{2}}+\frac{\big(110 \widetilde \lambda+25\lambda+72 \pi\big)\I\Li_2(\theta) }{20 \sqrt{2}}-\frac{180 G \widetilde{\lambda }+30 G \lambda-9 \lambda ^2 \widetilde{\lambda }+44 \widetilde{\lambda }^3 }{40 \sqrt{2}}+\frac{\pi  \big(384 G+160 \lambda  \widetilde{\lambda }-380 \widetilde{\lambda }^2+65 \lambda ^2\big)}{640 \sqrt{2}}\\&&{}-\frac{\pi ^2 \big(\widetilde{\lambda }-18 \lambda\big)}{160 \sqrt{2}}+\frac{139 \pi ^3}{1920 \sqrt{2}}
\\[5pt]\hline  {\refstepcounter{SSO}\SSOlabel{eq:6kLi3(8)b'}}\text{(\theSSO{})}&\frac{\mathsf H_{k-1}^{(2)}-5\mathsf H_{2k-1}^{(2)}}{k}&12\sqrt{2}\RE\left[\Li_{1,1,1}(i,1,\theta)- \Li_{1,1,1}(i,1,-\theta) \right]+\frac{68\sqrt{2}}{5}\RE\Li_3(\theta)\vphantom{\dfrac{\int1}{}}\\&&{}+\frac{2\sqrt{2}}{5}\I\left[8\Li_{1,1,1}(i,1,\theta) -3\Li_{1,1,1}(i\theta,1,i) -11\Li_{1,1,1}(\theta,1,\theta)\right]+\frac{51 \zeta (3)}{160\sqrt{2}}-\frac{\sqrt{2}\big(22 \widetilde{\lambda }+5 \lambda +60 \pi \big)\I\Li_2(\theta)}{10}\\&&{} +\frac{72 G \widetilde{\lambda }+12 G \lambda-15 \lambda ^2 \widetilde{\lambda }-4 \widetilde{\lambda }^3 }{20 \sqrt{2}}+\frac{\pi  \big(384 G-32 \lambda  \widetilde{\lambda }+76 \widetilde{\lambda }^2-13 \lambda ^2\big)}{160 \sqrt{2}}+\frac{\pi ^2 \big(79 \widetilde{\lambda }-30 \lambda \big)}{80 \sqrt{2}}-\frac{47 \pi ^3}{480 \sqrt{2}}\\[5pt] {\refstepcounter{SSO}\SSOlabel{eq:6kLi3(8)c'}}\text{(\theSSO{})}&\frac{\mathsf H_{k}^{(2)}-5\mathsf H_{2k}^{(2)}}{2k-1}&-9\sqrt{2}\RE\left[\Li_{1,1,1}(i,1,\theta)- \Li_{1,1,1}(i,1,-\theta) \right]-\frac{143\sqrt{2}}{15}\RE\Li_3(\theta)-\frac{\I\left[ 8\Li_{1,1,1}(i,1,\theta)-3\Li_{1,1,1}(i\theta,1,i) -11\Li_{1,1,1}(\theta,1,\theta)\right]}{10\sqrt{2}}\\&&{}-\frac{143 \zeta (3)}{640 \sqrt{2}}+\frac{\big(22 \widetilde{\lambda }+5 \lambda +360 \pi \big)\I\Li_2(\theta)}{40\sqrt{2}}-\frac{36 G \widetilde{\lambda }+6 G \lambda-45 \lambda ^2 \widetilde{\lambda }+28 \widetilde{\lambda }^3+160 \widetilde{\lambda } }{80 \sqrt{2}}-\frac{\pi  \big(4224 G-32 \lambda  \widetilde{\lambda }+76 \widetilde{\lambda }^2-13 \lambda ^2-640\big)}{1280 \sqrt{2}}\\&&{}-\frac{\pi ^2 \big(197 \widetilde{\lambda }-90 \lambda \big)}{320 \sqrt{2}}+\frac{47 \pi ^3}{3840 \sqrt{2}}+\frac{\pi ^2-12 \widetilde{\lambda }^{2}}{8} \\[5pt]\hline\hline\end{tabular}\end{scriptsize}\end{table}\begin{proof}Consider the sets [cf.\ \eqref{eq:Sx_defn}--\eqref{eq:Sx-defn}]\begin{align}
S_{\sqrt{2}}^+=\left\{ \sqrt{2},e^{3\pi i/4},e^{-3\pi i/4} \right\},\quad S_{\sqrt{2}}^-=\left\{- \sqrt{2},e^{\pi i/4},e^{-\pi i/4} \right\},
\end{align}and $ S_{\sqrt{2}}^{}=S^+_{\sqrt{2}}\cup S^-_{\sqrt2}$.  In Au's   \texttt{MultipleZetaValues} package \cite{Au2022a}, the commands \texttt{IterIntDoableQ[\{0, 1, -1, w\}]} and
\texttt{IterIntDoableQ[\{0, 1, -1, a, b\}]} return the natural number  $8$ whenever  $ w\in S_{\sqrt2}^{}$, $ a\in S^+_{\sqrt{2}}$, and $ b\in S_{\sqrt{2}}^-$. This indicates that $G(0,\alpha,w;1)\in\mathfrak Z_3(8) $, $ G(0,w;1)\in\mathfrak Z_2(8)$, $ \log\big( 1-\frac{1}{w} \big)=G(w;1)\in\mathfrak Z_1(8)$, and $G(w,\tau,w';1)\in\mathfrak Z_3(8) $ for all $ w\in
S^\pm_{\sqrt{2}}$, $w'\in S^\mp_{\sqrt{2}}$, $ \alpha\in\{-1,0,1\}$, $ \tau\in\{-1,1\}$.

According to the last paragraph, all the summands in the last column of Table \ref{tab:6kLi3}(B.\BBBref{eq:Li3sum6k1'}) are expressible through members of $ \mathfrak Z_k(8),k\in\{1,2,3\}$  when $ x=\sqrt2$. Therefore, one can check  Table \ref{tab:6kSSO}\eqref{eq:6kLi3(8)a} against \texttt{MZExpand} in Au's   \texttt{MultipleZetaValues} package \cite{Au2022a}.

The linear combination (A.\AAAref{eq:Li3sum6k5})$ -2\times$(A.\AAAref{eq:Li3sum6k6}) from  Table \ref{tab:6kLi3} consists of  terms that are all expressible through members of $\mathfrak Z_3(8)$, according to Au's \texttt{IterIntDoableQ} test. This explains Table \ref{tab:6kSSO}\eqref{eq:6kLi3(8)b}. Likewise, one arrives at  Table \ref{tab:6kSSO}\eqref{eq:6kLi3(8)c} after taking  (B.\BBBref{eq:Li3sum6k5'})$ -2\times$(B.\BBBref{eq:Li3sum6k6'}) from Table \ref{tab:6kLi3}.

The aforementioned \texttt{IterIntDoableQ}  tests also entail [cf.\ \eqref{eq:A3defn}--\eqref{eq:AA3_defn} for the definitions of the functions $A_3(w)$ and $\mathscr A_3(a,b)$] \begin{align}
\begin{cases}A_3(w)\in\mathfrak Z_3(8), & \text{if }w\in S_{\sqrt2}, \\
\mathscr A_3(w,\tau)\in\mathfrak Z_3(8), & \text{if }w\in S_{\sqrt2},\tau\in\{-1,1\},\\ \mathscr A_3(a,b)\in\mathfrak Z_3(8),&\text{if }a\in S^\pm_{\sqrt{2}},b\in S^\mp_{\sqrt{2}}.
\end{cases}
\end{align}Therefore,
each term in Table \ref{tab:6kLi3}(A.\AAAref{eq:Li3sum6k5+6}) is explicitly computable in Au's    \texttt{MultipleZetaValues} package \cite{Au2022a}, which brings us Table \ref{tab:6kSSO}\eqref{eq:6kLi3(8)b'}.
Similarly, one can deduce Table \ref{tab:6kSSO}\eqref{eq:6kLi3(8)c'} from Table \ref{tab:6kLi3}(B.\BBBref{eq:Li3sum6k5'+6'}).

Combining the information above with the closed-form evaluations of [see  entries (A.\AAref{eq:sum6}) and (A.\AAref{eq:sum6'''}) in Table \ref{tab:6kLi2}]\begin{align}
\sum_{k=1}^\infty\frac{\binom{2k}k2^{3k}}{k^{s}\binom{3k}k\binom{6k}{3k}},\quad s\in\{1,2\}
\end{align}
as well as
[see \eqref{eq:T60spec1}
and
\eqref{eq:T60spec4} in
Corollary
\ref{cor:Z2(8)}]\begin{align}\sum_{k=1}^\infty\frac{\binom{2k}k2^{3k}}{(2k-1)^{s}\binom{3k}k\binom{6k}{3k}},\quad s\in\{1,2\},
\end{align} we get \eqref{eq:conj5.5iii}  as claimed.
\end{proof}

\end{document}